\newlength{\defbaselineskip}
\newcommand{\setlinespacing}[1]%
           {\setlength{\baselineskip}{#1 \defbaselineskip}}
\numberwithin{equation}{section}
\newtheorem{thm}{Theorem}[section]
\newtheorem{lem}[thm]{Lemma}
\newtheorem{prop}[thm]{Proposition}
\theoremstyle{definition}
\theoremstyle{remark}
\newtheorem{rem}[thm]{Remark}
\numberwithin{equation}{section}
\begin{document}

\title[Strichartz and smoothing estimates]
{Strichartz and smoothing estimates in weighted $L^2$ spaces and their applications}

\author{Youngwoo Koh and Ihyeok Seo}

\subjclass[2010]{Primary: 35B45, 42B35; Secondary: 35A01}
\keywords{Strichartz estimates, smoothing estimates, Morrey-Campanato class, well-posedness}

\thanks{Y. Koh was supported by NRF-2019R1F1A1054310.
I. Seo was supported by NRF-2019R1F1A1061316.}

\address{Department of Mathematics Education, Kongju National University, Kongju 32588, Republic of Korea}
\email{ywkoh@kongju.ac.kr}

\address{Department of Mathematics, Sungkyunkwan University, Suwon 16419, Republic of Korea}
\email{ihseo@skku.edu}

\maketitle


\begin{abstract}
The primary objective in this paper is to give an answer to an open question posed by
J. A. Barcel\'o, J. M. Bennett, A. Carbery, A. Ruiz and M. C. Vilela (\cite{BBCRV2})
concerning the problem of determining the optimal range on $s\geq0$ and $p\geq1$ for which the following Strichartz estimate
with time-dependent weights $w$ in Morrey-Campanato type classes $\mathfrak{L}^{2s+2,p}_2$ holds:
\begin{equation}\label{absset}
\|e^{it\Delta}f\|_{L_{x,t}^2(w(x,t))}\leq C\|w\|_{\mathfrak{L}^{2s+2,p}_2}^{1/2}\|f\|_{\dot{H}^s}.
\end{equation}
Beyond the case $s\geq0$, we further ask how much regularity we can expect on the setting \eqref{absset}.
But interestingly, it turns out that \eqref{absset} is false whenever $s<0$,
which shows that the smoothing effect cannot occur in this time-dependent setting
and the dispersion in the Schr\"odinger flow $e^{it\Delta}$ is not strong enough to have the effect.
This naturally leads us to consider the possibility of having the effect at best
in higher-order versions of \eqref{absset} with $e^{-it(-\Delta)^{\gamma/2}}$ ($\gamma>2$) whose dispersion is more strong.
We do obtain a smoothing effect exactly for these higher-order versions.
In fact, we will obtain the estimates where $\gamma\geq1$ in a unified manner and also their corresponding inhomogeneous estimates
to give applications to the global well-posedness for Schr\"odinger and wave equations with time-dependent perturbations.
This is our secondary objective in this paper.
\end{abstract}

\section{Introduction}
Over the last three decades, Strichartz and smoothing estimates proved to be very efficient tools
for dealing with dispersive equations.
The former initiated in \cite{Str2}
is useful for solving equations in which no derivatives are present in the potential or the nonlinearity,
while the latter is particularly important in other cases
since it makes it possible to recover the loss of derivatives in the equations.
This was an observation of Kato \cite{Ka}
when he succeeded in solving the Korteweg-de Vries (KdV) equation, $\partial_t+\partial_x^3u+u\partial_xu=0$,
by establishing a local smoothing estimate which shows that the solution is, locally, one derivative smoother than $L^2$ initial data.
For these reasons, those estimates have been intensively investigated for various equations of great importance
in mathematical physics.

In this paper we study Strichartz and smoothing estimates for a class of dispersive equations
\begin{equation}\label{disp}
\begin{cases}
i\partial_tu-(-\Delta)^{\gamma/2}u=0,\\
u(x,0)=f(x),
\end{cases}
\end{equation}
where $(-\Delta)^{\frac {\gamma}{2}}$ is defined for $\gamma\geq1$
by means of the Fourier transform as
$$\widehat{[(-\Delta)^{\frac {\gamma}{2}} f]}(\xi)=|\xi|^\gamma \widehat{f}(\xi),$$
and applying the Fourier transform to \eqref{disp}, the solution is then given by
\begin{equation*}
e^{-it(-\Delta)^{\gamma/2}}f(x) :=(2\pi)^{-n}\int_{\mathbb{R}^n}e^{ix\cdot\xi}e^{-it|\xi|^\gamma} \widehat{f}(\xi)d\xi.
\end{equation*}
The motivation behind \eqref{disp} is that its solution operator $e^{-it(-\Delta)^{\gamma/2}}$ can be used to describe solutions
for various equations like Schr\"odinger and wave equations, as will be seen below (see Subsection \ref{subsec1.2}).

Our main purpose in this paper is to obtain the following type\footnote{Here, $L_{x,t}^2(w)$ is the weighted space $L^2(wdxdt)$ and
$\|f\|_{\dot{H}^s}:=\|(-\Delta)^{\frac s2}f\|_{L^2}$ is the homogeneous Sobolev norm of order $s\in\mathbb{R}$.} of weighted estimates
\begin{equation}\label{stsm}
\|e^{-it(-\Delta)^{\gamma/2}}f\|_{L_{x,t}^2(w(x,t))}\leq C\|w\|_{\mathfrak{L}^{2s+\gamma,p}_\gamma}^{1/2}\|f\|_{\dot{H}^s},
\end{equation}
where $\mathfrak{L}^{\alpha,p}_\gamma$ is a function class of weights $w\geq0$ equipped with the norm
$$\|w\|_{\mathfrak{L}^{\alpha,p}_\gamma}
:=\sup_{(x,t)\in\mathbb{R}^{n+1},r>0}r^\alpha
\bigg( \frac{1}{r^{n+\gamma}} \int_{Q(x,r) \times I(t,r^\gamma)} w(y,s)^p dyds \bigg)^{1/p}$$
for $\gamma\geq1$, $\alpha>0$ and $1\leq p\leq\frac{n+\gamma}\alpha$.
Here, $Q(x,r)$ denotes a cube in $\mathbb{R}^n$ centered at $x$ with side length $r$,
and $I(t,r^\gamma)$ denotes an interval in $\mathbb{R}$ centered at $t$ with length $r^\gamma$.

Notice that $\mathfrak{L}^{\alpha,p}_\gamma$ when $\gamma=1$ is just the usual Morrey-Campanato class $\mathfrak{L}^{\alpha,p}$ on $\mathbb{R}^{n+1}$,
and its norm has the following homogeneity
$$\|f(\lambda x,\lambda^\gamma t)\|_{\mathfrak{L}^{\alpha,p}_\gamma}
=\lambda^{-\alpha}\|f\|_{\mathfrak{L}^{\alpha,p}_\gamma}.$$
This observation motivates the definition of the class $\mathfrak{L}^{\alpha,p}_\gamma$.
In other words, they are anisotropic variants of the Morrey-Campanato class adapted to
scaling considerations $(x,t)\mapsto(\lambda x,\lambda^\gamma t)$ in regard to equation \eqref{disp}.
In this regard, we shall call $\mathfrak{L}^{\alpha,p}_\gamma$ $\gamma$-order anisotropic Morrey-Campanato class.
It should be now noted that $\alpha=2s+\gamma$ in \eqref{stsm} is automatically determined from the scaling invariance.
Notice also that $\mathfrak{L}^{\alpha,p}_\gamma =L^p$ when $p=\frac{n+\gamma}\alpha$,
and even $L^{p,\infty}\subset\mathfrak{L}^{\alpha,p}_\gamma$ for $p<\frac{n+\gamma}\alpha$.
So, $\mathfrak{L}^{\alpha,p}_\gamma$ can be seen as natural extensions of $L^p$ class, which contains more singular functions
like $|(x,t)|^{-(n+1)/p}$ for $p<\frac{n+\gamma}\alpha$ which do not belong to any $L^p$ class.
Finally, $\mathfrak{L}^{\alpha,p}_\gamma \subset \mathfrak{L}^{\alpha,q}_\gamma$ for $q<p$.

Estimates \eqref{stsm} when $s<0$ are indicative of smoothing estimates
\begin{equation*}
\big\||\nabla|^{-s}e^{-it(-\Delta)^{\gamma/2}}f\big\|_{L_{x,t}^2(w(x,t))} \leq C\|w\|_{\mathfrak{L}^{2s+\gamma,p}_\gamma}^{1/2} \|f\|_{L^2}.
\end{equation*}
On the other hand, they may be thought of as weighted Strichartz estimates in case of $s\geq0$,
which are more adapted than the classical ones to the study of the well-posedness theory for relevant equations in weighted spaces.
In fact, we will also obtain some corresponding inhomogeneous estimates (see Propositions \ref{prop}, \ref{prop00} and \ref{prop010}) and apply
them together with \eqref{stsm} to the well-posedness for various equations
like Schr\"odinger and wave equations with time-dependent potentials.
This is our secondary objective in this paper.

\subsection{Strichartz and smoothing estimates}
From now on, we will describe our results in detail.
We start with the Schr\"odinger equation that is the case $\gamma=2$ in \eqref{disp}.
It describes the wave behavior of a quantum particle
that is moving in the absence of external forces.
The physical interpretation for its solution is that
$|u(x,t)|^2$ is the probability density for finding the particle at place $x\in\mathbb{R}^n$ and time $t\in\mathbb{R}$.
This is led to think that $L^2(\mathbb{R}^n)$ will play a distinguished role.
In fact, by Plancherel's theorem, $e^{it\Delta}$ defines an isometry on $L_x^2$.
That is,
$\|e^{it\Delta}f\|_{L_x^2}=\|f\|_{L^2}$ for any fixed $t$.
But interestingly, if we take $L^2$-norm in $t$ in the case $n=1$,
the extra gain of regularity of order $1/2$ in $x$ can be observed. That is,
for any fixed $x\in\mathbb{R}$,
$$\|e^{it\Delta}f\|_{L_t^2}=C\|f\|_{\dot{H}^{-\frac12}},$$
which follows again from Plancherel's theorem.

This kind of smoothing effect for general space dimensions
is known to be true in a weighted $L_{x,t}^2$ space with a singular power weight in the spatial variable.
In fact,
\begin{equation}\label{KY}
\|e^{it\Delta}f\|_{L_{x,t}^2(|x|^{-2(s+1)})}\leq C_s\|f\|_{\dot{H}^s}
\end{equation}
if and only if $-\frac12<s<\frac{n-2}2$ and $n\geq2$.\footnote{This estimate was first discovered by Kato and Yagima \cite{KY}
for $-\frac12<s\leq0$ whenever $n\geq3$, and $-\frac12<s<0$ for $n=2$,
and an alternate proof of this result was given by Ben-Artzi and Klainerman \cite{BK}.
After that, it was shown by Watanabe \cite{Wa} that \eqref{KY} fails for the case $s=-\frac12$,
and the full range was obtained by Sugimoto \cite{Su} although it was later shown by Vilela \cite{V} that the range is indeed optimal.}
Now, by observing that $|x|^{-\alpha}\in\mathfrak{L}^{\alpha,p}(\mathbb{R}^n)$, $p<n/\alpha$, it seems natural to expect
a more general estimate,
\begin{equation}\label{indepen}
\|e^{it\Delta}f\|_{L_{x,t}^2(w(x))}\leq C\|w\|_{\mathfrak{L}^{2s+2,p}}^{1/2}\|f\|_{\dot{H}^s}
\end{equation}
for some $-1<s\leq\frac{n-2}2$ and $1\leq p\leq \frac n{2s+2}$.
Indeed, Barcel\'{o} et al \cite{BBCRV} obtained the following equivalent estimates
\begin{equation}\label{essenindepen}
\|e^{it\Delta}f\|_{L_{x,t}^2(w(x,t))}\leq C\|\sup_{t\in\mathbb{R}}w(x,t)\|_{\mathfrak{L}^{2s+2,p}}^{1/2}\|f\|_{\dot{H}^s}
\end{equation}
for $-\frac1{n+1}<s\leq\frac{n-2}2$ and $\frac{n-1}{2(2s+1)}<p\leq\frac n{2s+2}$ with $n\geq2$
(see also \cite{RV2} for $s=0$).
They also obtained some necessary conditions for \eqref{essenindepen} which particularly shows that their result is sharp when $s<0$ except for the border line $p=\frac{n-1}{2(2s+1)}$.
In other words, it is not possible to gain a regularity of order $s>\frac1{n+1}$ in the setting \eqref{indepen}.

Inspired by \eqref{indepen} (or \eqref{essenindepen}), we may now expect more subtle estimates,
\begin{equation}\label{depen}
\|e^{it\Delta}f\|_{L_{x,t}^2(w(x,t))}\leq C\|w\|_{\mathfrak{L}_2^{2s+2,p}}^{1/2}\|f\|_{\dot{H}^s}
\end{equation}
for some $-1<s\leq \frac n2$ and $1\leq p\leq\frac{n+2}{2s+2}$.
In fact, since $\mathfrak{L}_2^{\alpha,p}(\mathbb{R}^{n+1})$ is an anisotropic variant of $\mathfrak{L}^{\alpha,p}(\mathbb{R}^n)$
and $\mathfrak{L}_x^{\alpha,p}L_t^\infty\subset\mathfrak{L}_2^{\alpha,p}(\mathbb{R}^{n+1})$,
estimates \eqref{depen} can be viewed as natural extensions of \eqref{indepen} (or \eqref{essenindepen}) to more general time-dependent weights.
Recently, there was an attempt \cite{BBCRV2} to obtain \eqref{depen} in the case of $s\geq0$ only.
This work was inspired by
\begin{equation}\label{wstr}
\|e^{it\Delta}f\|_{L_{x,t}^2(w(x,t))}\leq C\|w\|_{L_{x,t}^p}^{1/2}\|f\|_{\dot{H}^s}
\end{equation}
where $0\leq s<\frac n2$ and $p=\frac{n+2}{2s+2}$,
which follows from H\"older's inequality along with the classical Strichartz estimates\footnote{These can be obtained combining the Hardy-Littlewood-Sobolev inequality
and the mixed-norm Strichartz estimates
$\|e^{it\Delta}f\|_{L_t^qL_x^r}\leq C\|f\|_{L^2}$
which holds if and only if
$\frac2q+\frac nr=\frac n2$, $q,r\geq2$ and $(q,r,n)\neq(2,\infty,2)$ (see \cite{Str2,GV,M,KT}).}
$\|e^{it\Delta}f\|_{L_{x,t}^{\widetilde{r}}}\leq C\|f\|_{\dot{H}^s}$
where $0\leq s<\frac n2$ and $\widetilde{r}=\frac{2(n+2)}{n-2s}$.
In fact, since $L^p(\mathbb{R}^{n+1})=\mathfrak{L}^{2s+2,p}_2(\mathbb{R}^{n+1})$ for $p=\frac{n+2}{2s+2}$,
estimate \eqref{wstr} gives \eqref{depen} at least on the critical line $p=\frac{n+2}{2s+2}$, the segment $[E,C)$,
and then opens up the possibility of \eqref{depen} off this trivial line,
thus giving improvements on \eqref{wstr}.
See Figure 1.

It was shown by Barcel\'{o} et al \cite{BBCRV2}
that \eqref{depen} do indeed hold at least when $s$ is sufficiently large.
Precisely, they showed that \eqref{depen} holds if $\frac n4\leq s<\frac n2$ and $1<p\leq\frac{n+2}{2s+2}$,
i.e., if $(s,1/p)$ lies in the closed triangle with vertices $B,D,C$ except for the segment $[B,C]$,
and is false if $0\leq s<\frac n4$ and $p<\frac{n+4}{4s+4}$,
i.e., if $(s,1/p)$ lies in the closed triangle with vertices $A,F,B$ except for the segment $[B,F]$.
And then they left unanswered the more difficult question of whether \eqref{depen} might hold
for $0\leq s<\frac n4$ and some $\frac{n+4}{4s+4}\leq p<\frac{n+2}{2s+2}$, i.e., for some $(s,1/p)$ which lies
in the closed quadrangle with vertices $B,D,E,F$ except for the segments $[B,D]$ and $[D,E]$.

\begin{figure}[t!]
\includegraphics[width=11.0cm]{./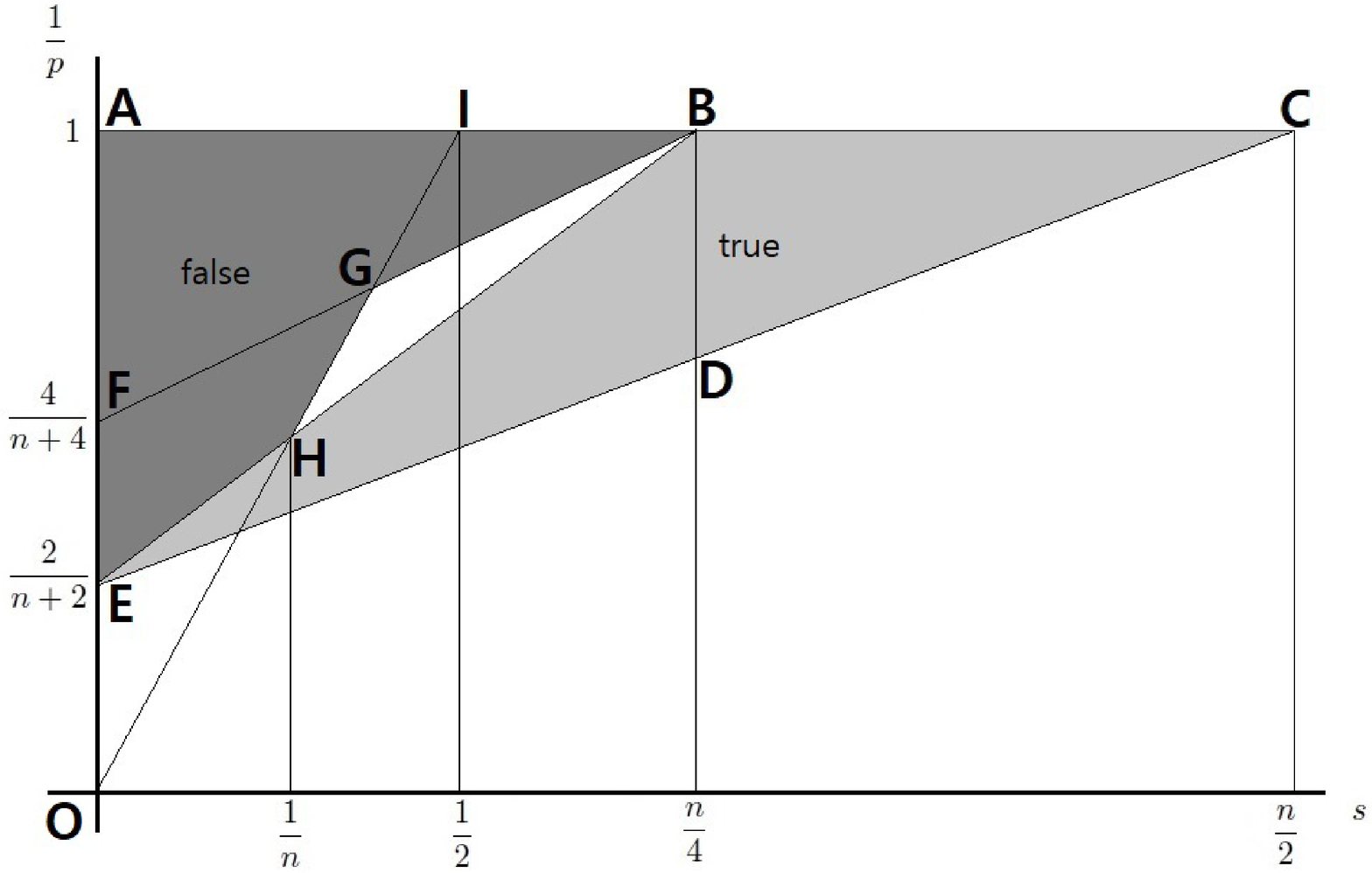}
\caption{The region of $(s,1/p)$ for the Schr\"odinger equation when $n\geq3$}
\end{figure}

Here we answer this open question as a consequence of Theorem \ref{thm} and Proposition \ref{prop0} below.
In fact, the theorem particularly shows that there exist some $p<\frac{n+2}{2s+2}$
for which \eqref{depen} holds for $0<s<\frac n4$ (see Remark \ref{rem0}),
and from the proposition one can see that there can be no such $p<\frac{n+2}{2s+2}$ when $s=0$ (see Remark \ref{rem}).

\begin{thm}\label{thm}
Let $n\geq1$. Then we have
\begin{equation}\label{depen2}
\|e^{it\Delta}f\|_{L_{x,t}^2(w(x,t))}\leq C\|w\|_{\mathfrak{L}^{2s+2,p}_2}^{1/2}\|f\|_{\dot{H}^s}
\end{equation}
if $0<s<\frac n2$ and\, $\max\{1,\frac{n+2}{4s+2}\}<p\leq\frac{n+2}{2s+2}$.
\end{thm}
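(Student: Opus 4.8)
The plan is to reduce the weighted estimate \eqref{depen2} to a duality pairing and then to a bilinear estimate for the operator $e^{it\Delta}(e^{is\Delta})^\ast$, interpolating between two endpoint behaviors that correspond to the two constraints $p > \frac{n+2}{4s+2}$ and $p \le \frac{n+2}{2s+2}$. Concretely, I would first observe that \eqref{depen2} is equivalent, by $TT^\ast$ and duality on $L^2_{x,t}(w)$, to the boundedness of
\begin{equation*}
(x,t)\mapsto \int e^{i(t-s)\Delta}\big[(-\Delta)^{s} F(\cdot,s)\big](x)\,ds
\end{equation*}
from $L^2_{x,t}(w^{-1})$ to $L^2_{x,t}(w)$, i.e.\ to a weighted bound
$\big\|\int K(t,s)F(\cdot,s)\,ds\big\|_{L^2(w)}\lesssim \|w\|_{\mathfrak{L}}\,\|F\|_{L^2(w^{-1})}$
where $K(t,s)$ has convolution kernel $c|t-s|^{-n/2}$-type decay after accounting for the $(-\Delta)^s$ factor. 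The right way to package the $\mathfrak{L}^{2s+2,p}_2$ norm is through the Stein--Tomas/Kato--Yajima circle of ideas together with a fractional-integration (Hardy--Littlewood--Sobolev) bound in the parabolic metric on $\mathbb R^{n+1}$, exactly as in the treatment of \eqref{essenindepen} and \eqref{wstr}, but now keeping the time dependence of the weight.

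The key steps, in order, are: (1) write the square of the left side as a double integral and use the explicit kernel of $e^{i(t-s)\Delta}$ to bound it by $\iint w(x,t) w(y,\tau) |t-\tau|^{-?}\,|x-y|^{-?}\cdots$ after an appropriate dyadic decomposition in $|t-\tau|$ (a Littlewood--Paley decomposition in the time-frequency may be cleaner); (2) on each dyadic piece $|t-\tau|\sim 2^j$, use the fixed-time dispersive estimate $\|e^{i(t-\tau)\Delta}\|_{L^1\to L^\infty}\lesssim 2^{-jn/2}$ interpolated with the $L^2$ isometry, together with the $\dot H^s$ regularity, to obtain a weighted bilinear form controlled by $\|w\|$ on a parabolic cube of scale $r\sim 2^{j/2}$; (3) sum the dyadic pieces — here the two-sided constraint on $p$ is exactly what makes the geometric series converge at both ends (the lower bound $p>\frac{n+2}{4s+2}$ controls small scales / high frequencies, the upper bound $p\le\frac{n+2}{2s+2}$ is the scaling-critical one and controls large scales), so that the sum telescopes to $\|w\|_{\mathfrak{L}^{2s+2,p}_2}$ times $\|f\|_{\dot H^s}^2$; (4) at the boundary $p=\frac{n+2}{2s+2}$ one recovers \eqref{wstr} directly, so the content is genuinely in the open range, and one may even first prove a restricted-type or Lorentz-space version and then real-interpolate. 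For $n=1$ one should double-check that the endpoint mixed-norm Strichartz exclusion $(q,r,n)\ne(2,\infty,2)$ never interferes, which it does not since $s>0$ keeps us away from that corner.

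I would expect the main obstacle to be step (3): getting the dyadic sum over time scales to converge \emph{and} to reassemble into precisely the $\mathfrak{L}^{2s+2,p}_2$ norm (rather than a larger norm) requires a careful interplay between the H\"older exponent $p$, the dispersive decay rate $n/2$, and the regularity $s$, and one must avoid losing a logarithm at the scaling-critical endpoint $p=\frac{n+2}{2s+2}$. A related subtlety is that, unlike in \eqref{essenindepen}, one cannot simply take a supremum of $w$ over $t$ first; the interval $I(t,r^\gamma)$ with $\gamma=2$ must be matched to the time scale $2^j$ coming from the kernel, so the decomposition in time and the shape of the parabolic cubes in the definition of the class have to be aligned from the outset. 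Once that bookkeeping is set up correctly, the remaining ingredients — the dispersive estimate, Plancherel, Hardy--Littlewood--Sobolev in the parabolic metric, and real interpolation — are standard, and the constraint $\max\{1,\frac{n+2}{4s+2}\}<p\le\frac{n+2}{2s+2}$ emerges as the exact summability window.
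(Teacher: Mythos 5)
Your proposal correctly identifies the $TT^*$ reduction and the need for a dyadic decomposition in $|t-\tau|$, both of which appear in the paper's argument. However, there is a genuine gap: you propose to work directly with the unprojected $TT^*$ kernel, which with $\dot H^s$ data is the kernel of $e^{i(t-\tau)\Delta}(-\Delta)^{-s}$ (note the sign; since $\|f\|_{\dot H^s}=\|(-\Delta)^{s/2}f\|_{L^2}$, the $TT^*$ operator carries the negative power, not $(-\Delta)^{s}$ as you wrote). For $s>0$ the factor $(-\Delta)^{-s}$ is a fractional integral and is not bounded $L^1\to L^\infty$, so the fixed-time dispersive bound $\|e^{i(t-\tau)\Delta}\|_{L^1\to L^\infty}\lesssim|t-\tau|^{-n/2}$ does not transfer to the combined kernel; a stationary-phase computation shows the kernel of $e^{i(t-\tau)\Delta}(-\Delta)^{-s}$ has a different decay profile, roughly $|t-\tau|^{s-n/2}$ in the relevant regime, and is singular at the frequency origin. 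This is exactly what the paper's frequency-localization step (Proposition \ref{Prop_F_local}) is designed to avoid: after a Littlewood--Paley projection to $|\xi|\sim 2^k$ and scaling to $k=0$, the factor $(-\Delta)^{-s}$ becomes a harmless constant and the kernel of $e^{i(t-\tau)\Delta}P_0^2$ is a clean $|t-\tau|^{-n/2}$-decaying object amenable to the stationary-phase Lemma \ref{25}. Your proposal never localizes in spatial frequency, and without it the program as described does not close.

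A second, related omission concerns how the frequency pieces are reassembled and how the dyadic-in-time sum is closed. Reassembly requires the weighted Littlewood--Paley theorem with a constant uniform in $w$; the paper obtains this by replacing $w$ with the $n$-dimensional maximal function $w_*$ and invoking Lemma \ref{lem2'} to get an $A_2$ constant independent of $w$, a point your proposal does not address. Moreover, the step you describe as ``Hardy--Littlewood--Sobolev in the parabolic metric plus real interpolation'' is not the mechanism that makes the dyadic sum converge. The paper instead proves two explicit time-localized bilinear bounds (Proposition \ref{tlocal}) --- an unweighted $L^2\times L^2\to\ell^{-1}_\infty$ bound and a weighted $L^2(w^{-p})\times L^2(w^{-p})\to\ell^{s_1}_\infty$ bound, the latter needing a further decomposition into spatial cubes of side $2^j$ together with Lemma \ref{25} --- and then applies bilinear real interpolation with $\theta=1/p$ (Lemmas \ref{inter} and \ref{id}); positivity of the resulting $\ell^s_\infty$ exponent is precisely $p>\frac{n+2}{4s+2}$. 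You correctly guessed that the lower bound on $p$ is a summability window and that $p\le\frac{n+2}{2s+2}$ is the scaling constraint of the class, but the interpolation is in $1/p$ between the two weighted $L^2$ spaces, not ``between scales''; in particular $p>1$ enters separately because one needs $\theta=1/p\in(0,1)$.
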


\begin{rem}\label{rem0}
This theorem entirely recovers the previous result mentioned above because $\max\{1,\frac{n+2}{4s+2}\}=1$ for $s\geq\frac n4$,
and particularly shows that \eqref{depen2} can hold if $0<s<\frac n4$ and $\frac{n+2}{4s+2}<p\leq\frac{n+2}{2s+2}$,
i.e., if $(s,1/p)$ lies in the closed triangle with vertices $B,E,D$ except for the segment $[B,E]$.
\end{rem}

Notice also that the theorem is sharp for $s\geq\frac n4$ except for the border line $p=1$, the segment $[B,C]$.
We shall continue to discuss the sharpness of the theorem for $s<\frac n4$ through the following proposition which
gives a new necessary condition on $p$ and $s$ for \eqref{depen2} to hold.
For details, see remarks below the proposition.

\begin{prop}\label{prop0}
Let $-1<s<n/4$. Estimate \eqref{depen2} is false if
\begin{equation*}
\frac1p>\frac{4s+2}{n+2}\quad\text{and}\quad \frac1p>2s.
\end{equation*}
\end{prop}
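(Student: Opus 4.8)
The plan is to disprove \eqref{depen2} by an explicit wave-packet example. For a large parameter $N$ I would take $\widehat{f_N}=\mathbf{1}_{\Theta_N}$, the characteristic function of the unit cube $\Theta_N$ centered at $(N,0,\dots,0)$. Writing $\xi=(N+\eta_1,\eta')$ with $(\eta_1,\eta')$ in the unit cube about the origin, the phase $x\cdot\xi-t|\xi|^2$ equals $x_1N-tN^2$ plus $(x_1-2tN)\eta_1+x'\cdot\eta'-t(\eta_1^2+|\eta'|^2)$, and the latter has modulus $\lesssim c$ on the tilted tube
\[
S_N:=\big\{(x,t)\in\mathbb{R}^{n+1}:|x_1-2tN|\le c,\ |x'|\le c,\ |t|\le c\big\};
\]
hence, for a small enough absolute constant $c>0$, the phase is essentially constant over $\Theta_N$ whenever $(x,t)\in S_N$, so that $|e^{it\Delta}f_N(x)|\gtrsim|\Theta_N|\sim1$ on $S_N$. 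Now put $w_N=\mathbf{1}_{S_N}$. Then $\|e^{it\Delta}f_N\|_{L_{x,t}^2(w_N)}\gtrsim|S_N|^{1/2}\sim1$ while $\|f_N\|_{\dot{H}^s}^2=\int_{\Theta_N}|\xi|^{2s}\,d\xi\sim N^{2s}$, so if \eqref{depen2} held we would be forced to have $\|w_N\|_{\mathfrak{L}^{2s+2,p}_2}\gtrsim N^{-2s}$ for all large $N$.

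The main step is to show that, on the contrary, $\|w_N\|_{\mathfrak{L}^{2s+2,p}_2}$ is much smaller. For each scale $r>0$ one computes $\sup_{(x,t)}\big|S_N\cap(Q(x,r)\times I(t,r^2))\big|$ and maximizes $r^{2s+2}\big(r^{-(n+2)}|S_N\cap(Q\times I)|\big)^{1/p}$ over $r$. Although $|S_N|\sim1$, the tube $S_N$ is tilted in spacetime (its defining wave packet travels at speed $\sim2N$), so the anisotropic boxes $Q(x,r)\times I(t,r^2)$ cannot fill it efficiently, and one is led to three regimes: $r\lesssim N^{-1/2}$, below which an entire box fits inside $S_N$ so the overlap is $\sim r^{n+2}$; $N^{-1/2}\lesssim r\lesssim1$, where the box still fits in the thin $(x',t)$-directions but the tube escapes in $x_1$, giving optimal overlap $\sim r^{n}/N$; and $1\lesssim r\lesssim N$, where the box covers the thin directions entirely and sees only an $\sim r/N$ fraction of the $x_1$-extent of $S_N$ (the $1/N$ being precisely the gain from the tilt). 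In each regime the quantity is a monotone power of $r$, the values at consecutive breakpoints agree, and one obtains
\[
\|w_N\|_{\mathfrak{L}^{2s+2,p}_2}\sim\max\big(N^{-(s+1)},\ N^{-1/p},\ N^{2s+2-(n+2)/p}\big),
\]
the three terms coming from $r\sim N^{-1/2}$, $r\sim1$, $r\sim N$ respectively.

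Feeding this back, validity of \eqref{depen2} would force, as $N\to\infty$, at least one of $s\ge1$, $\frac1p\le2s$, $\frac1p\le\frac{4s+2}{n+2}$. But the hypothesis $\frac1p>2s$ already forces $s<\frac12<1$ (and there is nothing to prove when $s\ge\frac12$, since then no $p\ge1$ satisfies $\frac1p>2s$), and together with the hypothesis $\frac1p>\frac{4s+2}{n+2}$ this rules out all three alternatives, a contradiction; so \eqref{depen2} must fail. I expect the main obstacle to be exactly the computation of $\|w_N\|_{\mathfrak{L}^{2s+2,p}_2}$: one must control $\sup_{(x,t)}|S_N\cap(Q(x,r)\times I(t,r^2))|$ uniformly in $r$, and it is the $\sim r/N$ overlap bound for $1\lesssim r\lesssim N$ — a genuine gain over the $\sim1$ that an untilted unit box would produce — that brings the norm low enough for the example to beat \eqref{depen2}.
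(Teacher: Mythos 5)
Your proposal is correct and takes essentially the same approach as the paper: a wave packet with Fourier support near frequency $(N,0,\dots,0)$, the tilted spacetime tube $|x_1-2Nt|\lesssim1$, $|x'|\lesssim1$, $|t|\lesssim1$ as the weight, and the same three-regime computation of the $\mathfrak{L}^{2s+2,p}_2$-norm with breakpoints $r\sim N^{-1/2},1,N$ yielding $\max\{N^{-(s+1)},N^{-1/p},N^{2s+2-(n+2)/p}\}$. (You also have the exponent $N^{-(s+1)}$ correct, whereas the paper's final displayed chain carries an apparent sign typo there.)
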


\begin{rem}\label{rem}
The lines $BE$ and $IO$ in Figure 1 correspond to $\frac1p=\frac{4s+2}{n+2}$ and $\frac1p=2s$, respectively.
Compared with the previously known result, the proposition therefore gives a new region of $(s,1/p)$ on which \eqref{depen2} is false
when $0\leq s<\frac n4$.
This region consists of $(s,1/p)$ which lies in
the closed quadrangle with vertices $E,F,G,H$ except for the segments $[E,H]$ and $[G,H]$.
\end{rem}

\begin{rem}
Since the line $BE$ is above $IO$ whenever $n\leq2$ or $s\leq1/n$,
we see that Theorem \ref{thm} is sharp when $n\leq2$ except for the border lines $[B,E)$ and $[B,C]$,
and is sharp for $s\leq1/n$ when $n\geq3$ except for the border line $[H,E)$.
It is now plausible to conjecture that the line $BE$ is the border line for which \eqref{depen2} holds.
But, if one considers radial initial data $f$, then one can see, from our previous result \cite{KoS3} (see Remark 1.3 there),
that \eqref{depen2} can hold beyond this border line.
\end{rem}

Beyond the case $s\geq0$, we further ask how much regularity we can expect on \eqref{depen2}.
But interestingly, since $\frac1p\geq\frac{2s+2}{n+2}>\frac{4s+2}{n+2}>2s$ for $s<0$,
from Proposition \ref{prop0} we see that \eqref{depen2} is false whenever $s<0$.
In other words, the smoothing effect cannot occur in the time-dependent setting \eqref{depen2}.
This shows that the dispersion in the Schr\"odinger equation is not strong enough to have the smoothing effect
and naturally leads us to consider the possibility of having the effect at best
in higher-order cases $\gamma>2$ in \eqref{disp} whose dispersion is more strong.
In the following theorem we obtain a smoothing effect exactly for these higher-order cases.
But, if we consider radial initial data $f$, there is still such a possibility in lower-order cases where $1< \gamma \leq2$.
See the first paragraph below Remark \ref{rem2}.

\begin{thm}\label{thm2}
Let $n\geq1$ and $\gamma>1$.
Then we have
\begin{equation}\label{higher}
\|e^{it(-\Delta)^{\gamma/2}}f\|_{L^2(w(x,t))} \leq C\|w\|_{\mathfrak{L}^{2s+\gamma,p}_{\gamma}}^{1/2} \|f\|_{\dot{H}^s}
\end{equation}
if $-\frac{(\gamma-2)n}{2(n+2)} <s< \frac n2$ and $\max\{1,\frac{n+2(\gamma-1)}{4s+2(\gamma-1)}\} <p\leq \frac{n+\gamma}{2s+\gamma}$.
\end{thm}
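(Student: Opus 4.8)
The plan is to reduce the weighted estimate \eqref{higher} to a dual, bilinear form and then to an oscillatory-integral kernel bound, interpolating between two endpoint cases. First I would dualize: by $TT^*$ considerations, \eqref{higher} is equivalent to the boundedness of the bilinear operator
\begin{equation*}
(F,G)\mapsto \int\!\!\int \overline{F(x,t)}\,K(x-y,t-s)\,G(y,s)\,dy\,ds\,dx\,dt
\end{equation*}
from $L^2(w^{-1})\times L^2(w^{-1})$ type spaces, where the kernel is
\begin{equation*}
K(x,t)=\int_{\mathbb{R}^n} e^{i(x\cdot\xi - t|\xi|^\gamma)}\,|\xi|^{-2s}\,d\xi .
\end{equation*}
Stationary phase (for $\gamma\neq2$ one uses the nondegeneracy of the phase away from the origin together with a Littlewood–Paley decomposition in $|\xi|$) gives a dispersive bound of the schematic form $|K(x,t)|\lesssim |t|^{-n/\gamma}\,\min\{1,(|x|/|t|^{1/\gamma})^{-N}\}$-type decay, refined near the light cone $|x|\sim|t|^{1/\gamma}$; the upshot is a pointwise estimate $|K(x,t)|\lesssim \big(|x|^\gamma+|t|\big)^{-(n-2s)/\gamma}$ in the relevant regime, which is exactly the homogeneity dictated by the scaling $(x,t)\mapsto(\lambda x,\lambda^\gamma t)$ and the order $2s$ of the derivative. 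This is the same homogeneity built into the class $\mathfrak{L}^{2s+\gamma,p}_\gamma$, so the two sides match under scaling — this matching is why $\alpha=2s+\gamma$ is forced.

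Next I would establish the estimate at two endpoints and interpolate. The first endpoint is the critical line $p=\frac{n+\gamma}{2s+\gamma}$, where $\mathfrak{L}^{2s+\gamma,p}_\gamma=L^p$ and the estimate follows from Hölder together with the fixed-time decay of $e^{-it(-\Delta)^{\gamma/2}}$ and the $L^q_tL^r_x$ Strichartz/mixed-norm estimates for the fractional Schrödinger propagator (which hold in the appropriate admissible range for $\gamma>1$); this is the higher-order analogue of the passage through \eqref{wstr}. The second, harder, endpoint is the line $\frac1p=\frac{n+2(\gamma-1)}{4s+2(\gamma-1)}\cdot\frac1{n+2(\gamma-1)}$ — that is, the constraint $\max\{1,\frac{n+2(\gamma-1)}{4s+2(\gamma-1)}\}<p$ — which encodes a local-smoothing / fractional-integration input. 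Here I would estimate the bilinear form by splitting the $(x-y,t-s)$-integral dyadically in the anisotropic "distance" $\rho=(|x-y|^\gamma+|t-s|)^{1/\gamma}$, bound $K$ on each shell by its sup, and on each shell apply the Morrey–Campanato norm to control the local $L^{p'}$ mass of $F$ and $G$ against $\|w\|_{\mathfrak{L}^{2s+\gamma,p}_\gamma}$; summing the resulting geometric series in the dyadic parameter requires precisely the stated lower bound on $p$. Real interpolation of the Morrey–Campanato scale (using $\mathfrak{L}^{\alpha,p}_\gamma\subset\mathfrak{L}^{\alpha,q}_\gamma$ for $q<p$, noted in the excerpt) then fills in the open segment between the two endpoints, giving the full triangle.

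The main obstacle I anticipate is the second endpoint: one must extract a genuine \emph{smoothing} gain (here $s$ is allowed negative, down to $-\frac{(\gamma-2)n}{2(n+2)}$), and a crude dyadic summation of the dispersive bound alone will not reach the sharp exponent — one needs the full strength of the local smoothing estimate for $e^{-it(-\Delta)^{\gamma/2}}$, i.e. the $|\nabla|^{(\gamma-1)/2}$-gain in $L^2_{loc}$, to push past $s=0$. Concretely, I expect the argument to require interpolating the abstract weighted estimate against the known local smoothing estimate $\big\||\nabla|^{(\gamma-1)/2}e^{-it(-\Delta)^{\gamma/2}}f\big\|_{L^2(|x|^{-1}dxdt)}\lesssim\|f\|_{L^2}$ (and its $\mathfrak{L}^{\alpha,p}_\gamma$-weighted refinements) rather than working purely with the kernel $K$. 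The bookkeeping of the anisotropic cubes $Q(x,r)\times I(t,r^\gamma)$ under this interpolation, and verifying that the exponent relations close exactly at $\frac1p=\frac{n+2(\gamma-1)}{4s+2(\gamma-1)}$, is the delicate part; the rest is the routine $TT^*$, stationary phase, and dyadic-summation machinery. Finally, specializing $\gamma=2$ recovers Theorem \ref{thm}, and the negative range of $s$ in the hypothesis is what makes the smoothing phenomenon visible for $\gamma>2$, consistent with Proposition \ref{prop0}.
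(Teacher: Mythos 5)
Your high-level framework ($TT^*$, dyadic decomposition, stationary phase, interpolation) overlaps with the paper's, but two central ingredients of the actual proof are missing, and the scheme you propose to fill the gap will not close. First, you fold the derivative weight $|\xi|^{-2s}$ directly into the oscillatory kernel $K(x,t)=\int e^{i(x\cdot\xi-t|\xi|^\gamma)}|\xi|^{-2s}\,d\xi$. For $s<0$ — precisely the smoothing range that Theorem \ref{thm2} is about when $\gamma>2$ — the symbol $|\xi|^{-2s}$ grows at infinity, the integral is not absolutely convergent, and the claimed pointwise bound $|K(x,t)|\lesssim(|x|^\gamma+|t|)^{-(n-2s)/\gamma}$ does not hold near $(x,t)=0$. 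The paper avoids this entirely: it first invokes Lemma \ref{lem2'} to replace $w$ by the maximal function $w_*$, obtaining an $A_2(\mathbb{R}^n)$ weight with uniform constant, which unlocks the weighted Littlewood--Paley theorem and reduces matters to the frequency-localized estimate \eqref{freq_par00}, where the kernel $\int e^{i(x\cdot\xi+t|\xi|^\gamma)}\phi(|\xi|)^2\,d\xi$ lives on $|\xi|\sim1$ and the homogeneous Sobolev weight contributes only the harmless scalar $2^{k(\alpha-\gamma)/2}$. This frequency localization step is explicitly advertised as the key novelty in the paper, and its absence is a real gap in your plan, not a detail.

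Second, the interpolation you invoke is of the wrong kind. The paper's interpolation (Lemma \ref{inter} with Lemma \ref{id}) happens at a \emph{fixed} $(s,p)$: it is a bilinear real interpolation between a trivial $L^2\times L^2\to\ell_\infty^{-1}$ bound on the time-localized pieces (Plancherel) and a weighted $L^2(w^{-p})\times L^2(w^{-p})\to\ell_\infty^{s_1}$ bound (spatial cube decomposition, stationary-phase decay $C_\gamma(j)=2^{-nj/2}$, and the $\mathfrak{L}^{\alpha,p}_\gamma$ norm to control local masses), with $\theta=1/p$. The smoothing gain for $s<0$ then falls out of the exponent $s_1=-(n/2+\gamma-p\alpha)$ after summing the geometric series in $j$; no separate ``local smoothing estimate'' is imported as a black-box input, contrary to what you conjecture. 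Your alternative — proving estimates at two points on the $(s,1/p)$-diagram and interpolating in the Morrey--Campanato scale — both is unnecessary (the nested inclusion $\mathfrak{L}^{\alpha,p}_\gamma\subset\mathfrak{L}^{\alpha,q}_\gamma$ for $q<p$ makes the estimate at the lowest admissible $p$ imply all larger $p$ for free, so there is nothing to interpolate) and is not carried out: you yourself flag the ``harder endpoint'' as undone and ``delicate''. As it stands, the proposal contains the correct skeleton but omits the maximal-function/$A_2$/Littlewood--Paley mechanism that makes the Sobolev norm manageable, and replaces the paper's dyadic bilinear interpolation by a vaguer two-endpoint scheme whose crucial endpoint is not established.
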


\begin{rem}
The case $\gamma=2$ in this theorem is, of course, reduced to Theorem \ref{thm}.
When $\gamma>2$, we have a smoothing effect in \eqref{higher} with a gain of regularity of order $s<\frac{(\gamma-2)n}{2(n+2)}$.
Some angular smoothing results are known for $w(x,t)=|x|^{-(2s+\gamma)}$ (see \cite{FW} and reference therein).
Note that $|x|^{-(2s+\gamma)}\in\mathfrak{L}^{2s+\gamma,p}_{\gamma}$.
Hence our smoothing results extend these previous ones to more general time-dependent weights
even without assuming any angular regularity on the initial data $f$.
\end{rem}

\begin{rem}\label{rem2}
In \cite{KoS2}, we have implicitly obtained \eqref{higher} for large $\gamma>(n+2)/2$ with $\mathfrak{L}^{2s+\gamma,p}_{\gamma}$
replaced by a different Morrey-Campanato type class\footnote{Notice that
$\mathfrak{L}^{\beta,p}(\mathbb{R};\mathfrak{L}^{\alpha,p}(\mathbb{R}^n))\subset\mathfrak{L}^{\alpha,\beta,p}$.}, $\mathfrak{L}^{\alpha,\beta,p}$, equipped with the norm
\begin{equation}\label{diff}
\|w\|_{\mathfrak{L}^{\alpha,\beta,p}}
:=\sup_{(x,t)\in\mathbb{R}^{n+1},r,l>0}r^\alpha l^\beta
\bigg(\frac{1}{r^nl} \int_{Q(x,r) \times I(t,l)} w(y,s)^p dyds \bigg)^{\frac{1}{p}}
\end{equation}
which is defined for $0<\alpha\leq \frac np$, $0<\beta\leq\frac1p$ and $p\geq1$.
But, $\mathfrak{L}^{\alpha,\beta,p}\subset\mathfrak{L}^{\alpha+ \gamma\beta, p}_\gamma$ by taking $r=l^{1/\gamma}$ in \eqref{diff}.
By the scaling, the condition $\alpha+ \gamma\beta = 2s+\gamma$ is also assumed to be valid for which the estimate holds in the case of $\mathfrak{L}^{\alpha,\beta,p}$ replacing  $\mathfrak{L}^{2s+\gamma,p}_{\gamma}$.
Hence the present work significantly improves this previous one.
\end{rem}

The smoothing effect can occur when $1< \gamma \leq2$ as well, if we consider radial $f$.
In fact, we recently showed for radial $f$ that \eqref{higher} holds if
$-\frac{(\gamma-1)n -\gamma}{2n} <s< \frac n2$ and $\max\{1,\frac{\gamma}{2s+\gamma-1}\} <p\leq \frac{n+\gamma}{2s+\gamma}$ whenever $n\geq2$ and $\gamma>1$ (see Remark 1.3 in \cite{KoS3}).
This shows that we can have a smoothing effect in the radial case with a gain of regularity of order $s<\frac{(\gamma-1)n-\gamma}{2n}$
whenever $n>\frac{\gamma}{\gamma-1}$ and $\gamma>1$.

\begin{figure}[t!]
\includegraphics[width=11.0cm]{./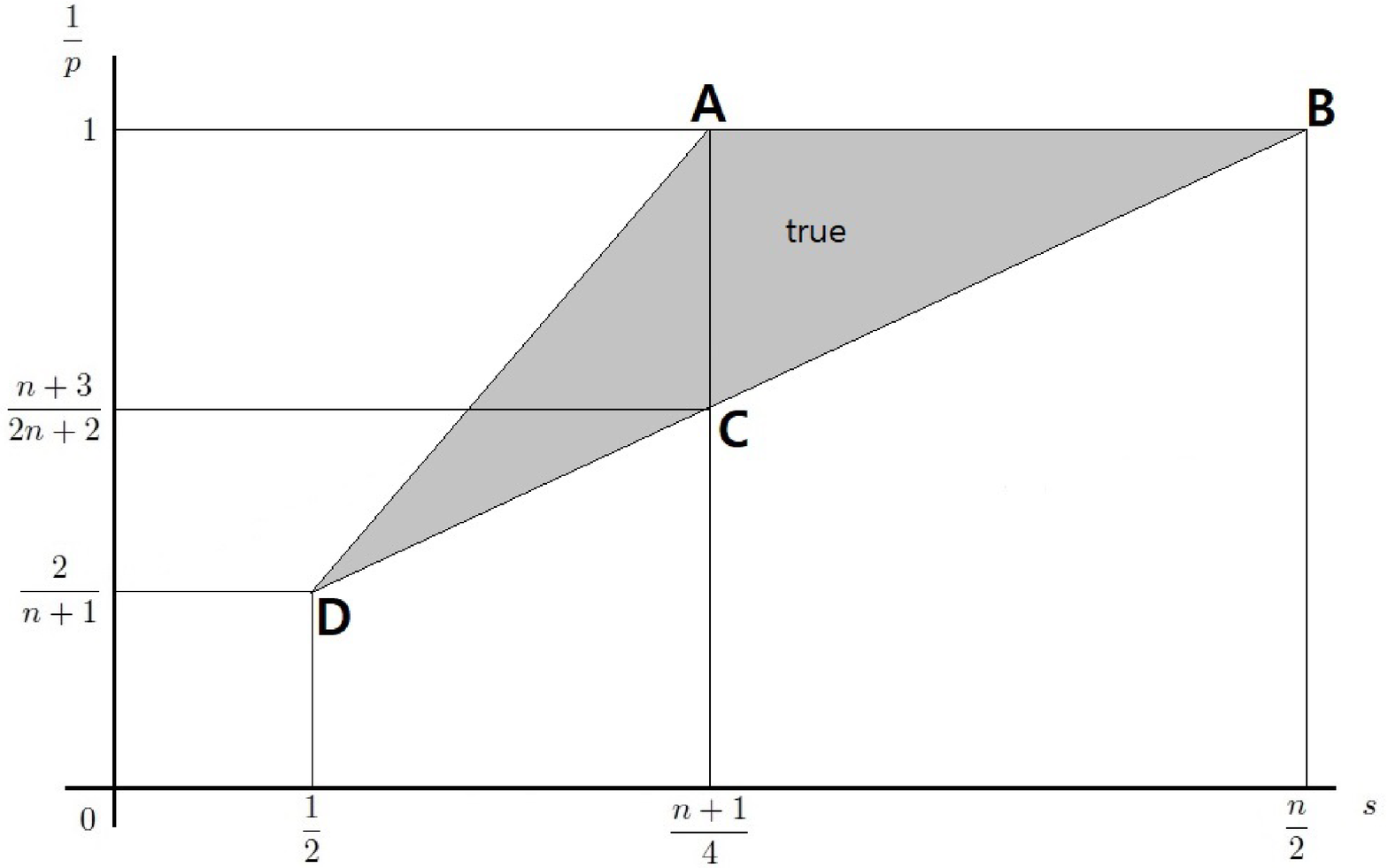}
\caption{The region of $(s,1/p)$ for the wave equation when $n\geq2$}
\end{figure}

Compared to \eqref{higher}, the following estimates for the case $\gamma=1$ are somewhat inferior but this is due to the weaker dispersion in the wave equation.

\begin{thm}\label{thm3}
Let $n\geq2$.
Then we have
\begin{equation}\label{wave}
\big\|e^{it\sqrt{-\Delta}}f \big\|_{L^2(w(x,t))}\leq C\|w\|_{\mathfrak{L}_1^{2s+1,p}}^{1/2}\|f\|_{\dot{H}^s}
\end{equation}
if $\frac12<s<\frac n2$ and $\max\{1,\frac{n+1}{4s}\}<p\leq\frac{n+1}{2s+1}$.
\end{thm}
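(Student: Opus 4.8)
The plan is to carry out, in the case $\gamma=1$, the same three-step scheme used for Theorems \ref{thm} and \ref{thm2}: a $TT^*$ reduction, a pointwise bound for the resulting kernel adapted to the dilations $(x,t)\mapsto(\lambda x,\lambda t)$ that define $\mathfrak{L}_1^{2s+1,p}$, and a weighted fractional-integration inequality for the Morrey--Campanato class. Write $f=|\nabla|^{-s}g$ with $\|g\|_{L^2}=\|f\|_{\dot{H}^s}$, so that \eqref{wave} is equivalent to $\|w^{1/2}|\nabla|^{-s}e^{it\sqrt{-\Delta}}g\|_{L^2_{x,t}}\lesssim\|w\|_{\mathfrak{L}_1^{2s+1,p}}^{1/2}\|g\|_{L^2}$. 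By the $TT^*$ identity it suffices to bound, on $L^2_{x,t}$, the operator $TT^*$ that sends $F$ to $w(x,t)^{1/2}\int_{\mathbb R}[\,|\nabla|^{-2s}e^{i(t-\tau)\sqrt{-\Delta}}(w(\cdot,\tau)^{1/2}F(\cdot,\tau))\,](x)\,d\tau$, with norm $\lesssim\|w\|_{\mathfrak{L}_1^{2s+1,p}}$. Everything is thereby reduced to the convolution kernel
$$G_s(z,\sigma):=\bigl[\,|\nabla|^{-2s}e^{i\sigma\sqrt{-\Delta}}\,\bigr](z)=c_n\int_{\mathbb R^n}e^{i(z\cdot\xi+\sigma|\xi|)}|\xi|^{-2s}\,d\xi,$$
which is homogeneous of degree $-(n-2s)=-\bigl((n+1)-(2s+1)\bigr)$ on $\mathbb R^{n+1}$.

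The analytic core --- and, I expect, the real obstacle --- is a pointwise estimate for $G_s$. Away from the light cone $|z|=|\sigma|$ the phase $z\cdot\xi+\sigma|\xi|$ has no critical point, so repeated integration by parts gives fast decay; near the cone the stationary phase in $\xi$ is degenerate (the cone carries one vanishing principal curvature) and the singular behaviour of the amplitude $|\xi|^{-2s}$ at $\xi=0$ must be tracked. I would Littlewood--Paley decompose, $G_s=\sum_{\mu=2^k}\mu^{-2s}K_\mu$ with $K_\mu$ the kernel of $e^{i\sigma\sqrt{-\Delta}}P_\mu$, use that $K_\mu$ concentrates in the $\mu^{-1}$-neighbourhood of $\{|z|=|\sigma|\}$ with amplitude $\mu^n$ and decay $\bigl(\mu\,\big|\,|z|-|\sigma|\,\big|\bigr)^{-(n-1)/2}$ away from it, and sum in $\mu$ (the low-frequency sum converges exactly because $s<n/2$). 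This should give a two-term bound of the shape
$$|G_s(z,\sigma)|\ \lesssim\ |(z,\sigma)|^{-(n-2s)}\ +\ |(z,\sigma)|^{-(n-1)/2}\,\big|\,|z|-|\sigma|\,\big|^{-((n+1)/2-2s)}\,\chi_{\{|\,|z|-|\sigma|\,|\le|(z,\sigma)|\}},$$
in which the second, cone-concentrated, term is genuinely present only for $\tfrac12<s<\tfrac{n+1}{4}$ and is the sole source of the loss relative to Theorems \ref{thm} and \ref{thm2} (for $s\ge(n+1)/4$ it is dominated by the first term, which is why $\max\{1,\frac{n+1}{4s}\}=1$ there). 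This is also why $n\ge2$ is needed: for $n=1$ the cone is flat and $e^{it\sqrt{-\Delta}}$ acts by translation, so no smoothing occurs.

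Finally, one feeds this into a weighted fractional-integration inequality over the (isotropic, since $\gamma=1$) Morrey--Campanato class $\mathfrak{L}^{2s+1,p}(\mathbb R^{n+1})$. For the first term one uses a Stein--Weiss/Ruiz--Vega type estimate --- a convolution kernel $\lesssim|X-Y|^{-((n+1)-\alpha)}$ controls the bilinear form $\iint(v^{1/2}F)(X)(v^{1/2}\overline F)(Y)|X-Y|^{-((n+1)-\alpha)}\,dX\,dY$ by $\|v\|_{\mathfrak{L}^{\alpha,p}}\|F\|_{L^2}^2$ on a range of $p$ up to the endpoint $p=\frac{n+1}{\alpha}$, here $p\le\frac{n+1}{2s+1}$. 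For the cone-concentrated term one needs a counterpart adapted to a kernel that near the cone is strictly larger than the Riesz kernel: the transverse factor $\big|\,|z|-|\sigma|\,\big|^{-((n+1)/2-2s)}$ effectively lowers the order from $2s+1$ to $2s$, which narrows the admissible range to $p>\frac{n+1}{4s}$ and, at its own lower edge, forces $s>\tfrac12$. (Equivalently, the part of $G_s$ singular along the cone may be handled by a weighted $L^2$ Fourier-extension estimate for the light cone $\{(\xi,|\xi|)\}$, in the spirit of the restriction-type results of Barcel\'{o} et al.) Combining the two contributions and unwinding the $TT^*$ scaling gives \eqref{wave} on the stated range; the delicate point, to repeat, is the uniform control of $G_s$ near the light cone down to zero frequency and the design of the fractional-integration lemma so that the cone-singular piece, not the harmless homogeneous one, governs the final region of $(s,1/p)$.
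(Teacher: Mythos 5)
Your plan is presented as ``the same three-step scheme used for Theorems \ref{thm} and \ref{thm2},'' but it is not. The paper's scheme is: (i) replace $w$ by the maximal function $w_\ast$ via Lemma \ref{lem2'} so that $w(\cdot,t)\in A_2(\mathbb{R}^n)$ uniformly in $t$ and in $w$; (ii) apply the weighted Littlewood--Paley theorem to reduce to the single frequency-localized estimate \eqref{F_local_homo}; and (iii) prove that localized estimate (after rescaling to $k=0$) by $TT^*$, a dyadic decomposition in $|t-s|$, a cube decomposition in $x$, the Littman bound $(1+|(z,\sigma)|)^{-(n-1)/2}$ for the $P_0$-localized propagator, and \emph{bilinear real interpolation} (Lemma \ref{inter}) between a trivial $L^2\times L^2\to\ell^{-1}_\infty$ bound and an $L^2(w^{-p})\times L^2(w^{-p})\to\ell^{s_1}_\infty$ bound. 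In that argument the lower constraint $p>\tfrac{n+1}{4s}$ comes out mechanically from summability of a geometric series in the time-dyadic index $j$, and $s>\tfrac12$ from compatibility with $p\le\frac{n+1}{2s+1}$. Because everything happens after the Littlewood--Paley projection $P_0$, the relevant kernel is smooth and uniformly bounded by the Littman estimate; there is never a singularity along the cone to contend with. You, by contrast, skip the localization entirely, go straight to $TT^*$ for the full operator, and try to close the estimate by a pointwise bound on the global space-time kernel $G_s$ followed by a weighted fractional-integration lemma. That is the kernel/resolvent-type philosophy used for \emph{time-independent} weights (Ruiz--Vega, Sawyer--Wheeden, Barcel\'o et al.), which the paper expressly notes does not transfer to time-dependent $w$.

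Two concrete gaps remain in your route, both of which you yourself flag as ``the real obstacle'' and ``the delicate point'' without resolving them. First, the claimed two-term pointwise bound on $G_s(z,\sigma)$ near $\{|z|=|\sigma|\}$, uniformly down to zero frequency and over the whole range $\tfrac12<s<\tfrac n2$, is asserted but not proved; this is a nontrivial oscillatory-integral computation and cannot simply be cited. Second, and more seriously, the weighted estimate you need for the cone-concentrated piece has no off-the-shelf counterpart. Sawyer--Wheeden covers Riesz kernels $|X-Y|^{-((n+1)-\alpha)}$ on $\mathbb{R}^{n+1}$ against $\mathfrak{L}^{\alpha,p}$ weights, which handles your first term with $\alpha=2s+1$ and the endpoint $p\le\frac{n+1}{2s+1}$. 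But $|(z,\sigma)|^{-(n-1)/2}\bigl|\,|z|-|\sigma|\,\bigr|^{-((n+1)/2-2s)}$ is anisotropic, singular along an entire hypersurface rather than a point, and has the \emph{same} homogeneity $-(n-2s)$ as the Riesz term; so no scaling argument by itself gives the shifted lower threshold $p>\frac{n+1}{4s}$ or the exclusion $s>\tfrac12$, and your claim that it ``effectively lowers the order from $2s+1$ to $2s$'' is a heuristic, not a lemma. A cone-adapted Stein--Weiss-type inequality in $\mathfrak{L}^{\alpha,p}(\mathbb{R}^{n+1})$ would have to be formulated and proved, and that is work at least comparable to the paper's Section \ref{sec4}. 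Until both pieces are supplied the argument does not close, whereas the paper's frequency-localization and bilinear interpolation avoid the cone singularity from the start and deliver the stated range of $(s,p)$ directly.
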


\begin{rem}
In \cite{KoS}, we have already obtained \eqref{wave} when $\frac{n+1}{4}\leq s<\frac n2$ and $1<p\leq\frac{n+1}{2s+1}$,
i.e., when $(s,1/p)$ lies in the closed triangle with vertices $A,B,C$ except for the segment $[A,B]$.
Since $\max\{1,\frac{n+1}{4s}\}=1$ for $s\geq\frac{n+1}{4}$, the theorem entirely recovers this result,
and significantly extends it to the case when
$\frac12<s<\frac{n+1}{4}$ with $\frac{n+1}{4s}<p\leq\frac{n+1}{2s+1}$,
i.e., when $(s,1/p)$ lies in the closed triangle with vertices $A,C,D$ except for the segment $[A,D]$.
See Figure 2.
\end{rem}

\begin{rem}
From the classical Strichartz's estimate (\cite{Str2}), one can see that
\begin{equation*}
\big\|e^{it\sqrt{-\triangle}}f\big\|_{L^r_{x,t}}
\leq C\|f\|_{\dot{H}^s}
\end{equation*}
for $2(n+1)/(n-1)\leq r<\infty$ and $s=n/2-(n+1)/r$.
By H\"older's inequality along with this estimate we get
\begin{equation}\label{fcs}
\big\|e^{it\sqrt{-\triangle}}f\big\|_{L^2(w(x,t))}^2
\leq\|w\|_{L^{\frac{r}{r-2}}}\|e^{it\sqrt{-\triangle}}f\|_{L^r}^2
\leq C\|w\|_{L^{\frac{n+1}{2s+1}}}\|f\|_{\dot{H}^s}^2
\end{equation}
for $1/2\leq s<n/2$.
Since $L^p(\mathbb{R}^{n+1})=\mathfrak{L}^{2s+1,p}_1(\mathbb{R}^{n+1})$ for $p=\frac{n+1}{2s+1}$,
this estimate gives \eqref{wave} on the critical line $p=\frac{n+2}{2s+2}$, the segment $[D,B)$.
Hence \eqref{wave} can be seen as natural extensions to
the Morrey-Campanato classes of \eqref{fcs}.
\end{rem}

Some weighted $L_{x,t}^2$ estimates are known for \eqref{wave}, but with time-independent weights $w(x)$.
In particular, it can be found in \cite{RV2} for $w(x)\in\mathfrak{L}^{2,p}(\mathbb{R}^n)$
with $p>(n-1)/2$, $n\geq3$.
For a more specific weight $w(x)=|x|^{-(2s+1)}$ with $0<s<(n-1)/2$, see (3.6) in \cite{HMSSZ}.
Note again that $|x|^{-(2s+1)}\in\mathfrak{L}_1^{2s+1,p}(\mathbb{R}^{n+1})$.
Hence, \eqref{wave} gives estimates for more general time-dependent weights $w(x,t)$.

\subsection{Applications}\label{subsec1.2}
Now we turn to a few applications of our estimates to global well-posedness of the Cauchy problem for dispersive equations
in which the most fundamental differential operators
may be Schr\"odinger ($i\partial_t+\Delta$) and wave ($\partial_t^2-\Delta$) operators
that are second-order.
The higher-order counterparts of them have been also considerably attracted in recent years from mathematical physics.
For example, the fourth-order Schr\"odinger operator $i\partial_t-\Delta^2$
was introduced in \cite{K,K2,KS} to consider the role of small fourth-order dispersion
terms in the propagation of intense laser beams in a bulk medium.
The fourth-order wave operator $\partial_t^2+\Delta^2$ has been involved in the study of plate and beams (\cite{Lo}).

In this regard, we shall focus our attention on the following Cauchy problems,
\begin{equation}\label{Welsch}
\begin{cases}
i\partial_tu-(-\Delta)^{\gamma/2} u +V(x,t)u=F(x,t),\\
u(x,0)=f(x)
\end{cases}
\end{equation}
and
\begin{equation}\label{Welwav}
\begin{cases}
\partial_t^2u+(-\Delta)^{\gamma/2}u +V(x,t)u=F(x,t),\\
u(x,0)=f(x),\\
\partial_tu(x,0)=g(x)
\end{cases}
\end{equation}
with $\gamma\geq2$.
Our second aim in this paper is then to find a suitable condition on the perturbed term $V(x,t)$
which guarantees that these problems are globally well-posed in the weighted $L^2$ space, $L^2(|V|dxdt)$.
See Section \ref{sec6} for details.
Our method may have further applications for other dispersive equations.
It can be applied to the linearized KdV type equations,
$\partial_tu+\partial_x^{2k+1}u+V(x,t)u=0$, $u(x,0)=f(x)$.
It takes up the final subsection \ref{subsec7.2}.

A natural way to achieve this well-posedness is to control weighted $L^2$ integrability of the solution
in terms of regularity of the Cauchy data and the forcing term.
To be precise, let us first observe that
the solutions to \eqref{Welsch} and \eqref{Welwav} are given by
\begin{equation}\label{solsch}
u(x,t)=e^{-it(-\Delta)^{\gamma/2}}f(x) -i\int_0^t e^{-i(t-s)(-\Delta)^{\gamma/2}}(F-Vu)(\cdot,s)ds
\end{equation}
and
\begin{equation}\label{solwave}
\begin{aligned}
u(x,t)=\Re\bigg[e^{-it\sqrt{(-\Delta)^{\gamma/2}}}\bigg]f(x) &+i\Im\bigg[\frac{e^{-it\sqrt{(-\Delta)^{\gamma/2}}}}{\sqrt{(-\Delta)^{\gamma/2}}}\bigg]g(x)\\
-i\int_0^t&\Im\bigg[\frac{e^{-i(t-s)\sqrt{(-\Delta)^{\gamma/2}}}}{\sqrt{(-\Delta)^{\gamma/2}}}\bigg] (F-Vu)(\cdot,s)ds,
\end{aligned}
\end{equation}
respectively,
via the Fourier transform and Duhamel's principle.
Here $\Re$ and $\Im$ denote the real and imaginary parts of the relevant operators.
Our basic strategy is then based on obtaining suitable weighted-$L^2$ Strichartz estimates for the homogeneous and inhomogeneous components
of the solutions \eqref{solsch} and \eqref{solwave}.
For the homogeneous components we will use the estimates in Theorems \ref{thm2} and \ref{thm3}.
On the other hand, for the inhomogeneous components
we will obtain the following estimates (Propositions \ref{prop}, \ref{prop00} and \ref{prop010})
which are adapted to obtaining our well-posedness results in the weighted setting.

\begin{prop}\label{prop}
Let $n\geq1$ and $\gamma\geq2$.
Then we have
\begin{equation}\label{higher3}
\bigg\|\int_{0}^{t}e^{-i(t-s)(-\Delta)^{\gamma/2}}F(\cdot,s)ds\bigg\|_{L^2(w(x,t))}
\leq C\|w\|_{\mathfrak{L}^{\gamma,p}_{\gamma}} \|F\|_{L^2(w(x,t)^{-1})}
\end{equation}
if $\max\{1,\frac{n+2(\gamma-1)}{2(\gamma-1)}\}<p\leq\frac{n+\gamma}{\gamma}$ when $\gamma>2$, and if $p=\frac{n+2}{2}$ when $\gamma=2$.
\end{prop}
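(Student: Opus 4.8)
The plan splits according to whether $p$ is the Lebesgue endpoint $\frac{n+\gamma}{\gamma}$ --- the only admissible value when $\gamma=2$ --- or is strictly below it. For $\gamma=2$ we have $p=\frac{n+2}{2}$ and $\mathfrak{L}^{2,p}_2=L^{(n+2)/2}(\mathbb{R}^{n+1})$, so that $\|w\|_{\mathfrak{L}^{2,p}_2}=\|w^{1/2}\|_{L^{n+2}}^{2}$; since $\frac12-\frac1{n+2}=\frac{n}{2(n+2)}$, inserting $w=w^{1/2}\cdot w^{1/2}$ and using H\"older's inequality on each side of \eqref{higher3} reduces it to the unweighted retarded inhomogeneous Strichartz estimate $\big\|\int_0^t e^{i(t-s)\Delta}F(\cdot,s)\,ds\big\|_{L^{2(n+2)/n}_{x,t}}\le C\|F\|_{L^{2(n+2)/(n+4)}_{x,t}}$ at the diagonal admissible pair, which is classical. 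The Duhamel truncation $\int_0^t$ is harmless here, because the input time-exponent $\frac{2(n+2)}{n+4}$ is strictly smaller than the output one $\frac{2(n+2)}{n}$, so the Christ--Kiselev lemma transfers the bound from the untruncated operator.

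For $\gamma>2$ I would argue by duality, reducing \eqref{higher3} to the trilinear estimate
\[
\iint_{0<s<t}\big|\big\langle e^{-i(t-s)(-\Delta)^{\gamma/2}}F(\cdot,s),\,G(\cdot,t)\big\rangle_{L^2_x}\big|\,ds\,dt\le C\|w\|_{\mathfrak{L}^{\gamma,p}_\gamma}\|F\|_{L^2(w^{-1})}\|G\|_{L^2(w^{-1})}.
\]
The key point is that, having already passed to absolute values, the region $\{0<s<t\}$ may be enlarged to all of $\mathbb{R}^2$ at no cost, and this is exactly what removes the Duhamel truncation. (When $\gamma=2$ the kernel of $i\partial_t-(-\Delta)$ carries no spatial decay, $|K(x,t)|=c|t|^{-n/2}$, so this step is hopelessly wasteful; that is the structural reason why only $p=\frac{n+2}{2}$ survives for $\gamma=2$.) I note in passing that the same inequality with the oscillation retained is, by a $TT^*$ computation, nothing but the $s=0$ case of Theorem \ref{thm2} composed with its adjoint.

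To prove the trilinear estimate for $\gamma>2$ I would use that the phase $|\xi|^\gamma$ is uniformly convex on dyadic shells, so that the kernel of $e^{-i\tau(-\Delta)^{\gamma/2}}$, localized in frequency and then reassembled, decays both in $|\tau|$ and in $|x-y|$. This yields a pointwise bound $\big|\langle e^{-i(t-s)(-\Delta)^{\gamma/2}}F(\cdot,s),G(\cdot,t)\rangle_{L^2_x}\big|\lesssim\iint\mathcal{K}(x-y,t-s)|F(y,s)|\,|G(x,t)|\,dy\,dx$ with a nonnegative convolution kernel $\mathcal{K}$; interpolating this against the $L^2_x$-isometry produces a one-parameter family of such kernels, each homogeneous of the correct degree under $(x,t)\mapsto(\lambda x,\lambda^\gamma t)$, and the matter is reduced to a Stein--Weiss type inequality for these kernels in the $\gamma$-anisotropic Morrey--Campanato class, namely $\iint\mathcal{K}(X-Y)\phi(Y)\psi(X)\,dX\,dY\lesssim\|w\|_{\mathfrak{L}^{\gamma,p}_\gamma}\|\phi\|_{L^2(w^{-1})}\|\psi\|_{L^2(w^{-1})}$.

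The main obstacle is this last inequality: a weighted fractional-integration bound valid in $\mathfrak{L}^{\gamma,p}_\gamma$ for $p$ strictly below the Lebesgue endpoint, where the class genuinely contains functions belonging to no $L^q$. I would establish it by decomposing $w$ along the dyadic level sets of its size and estimating each piece with the member of the interpolated family whose exponents are adapted to the anisotropic aspect ratio that the Morrey--Campanato condition forces on that level set, then summing a geometric series. The delicate part is the bookkeeping: one must check that the constraints this argument imposes --- positivity and local integrability of $\mathcal{K}$, the Hardy--Littlewood--Sobolev exponent relation, and nondegeneracy of the interpolation --- collapse to exactly $\max\{1,\frac{n+2(\gamma-1)}{2(\gamma-1)}\}<p\le\frac{n+\gamma}{\gamma}$, with $p\le\frac{n+\gamma}{\gamma}$ being the plain Lebesgue endpoint $\mathfrak{L}^{\gamma,(n+\gamma)/\gamma}_\gamma=L^{(n+\gamma)/\gamma}$ and the lower threshold $p>\frac{n+2(\gamma-1)}{2(\gamma-1)}$ marking the borderline beyond which the Duhamel kernel for $(-\Delta)^{\gamma/2}$ fails to be integrable enough against Morrey--Campanato weights.
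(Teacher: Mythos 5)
For $\gamma=2$ your argument essentially reproduces the paper's: H\"older in the Lebesgue endpoint $\mathfrak{L}^{2,(n+2)/2}_2=L^{(n+2)/2}$, reducing to the retarded diagonal Strichartz estimate; your Christ--Kiselev remark is a correct addendum (the input time exponent $\tfrac{2(n+2)}{n+4}$ is strictly below the output $\tfrac{2(n+2)}{n}$, so the transference applies).

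For $\gamma>2$ your route diverges from the paper's and is where the gap lies. The paper does not aim for a single convolution-kernel/Schur estimate. It keeps the frequency localization throughout: after replacing $w$ by its maximal function $w_*$ to guarantee $w_*(\cdot,t)\in A_2$ uniformly (Lemma \ref{lem2'}), it applies the weighted Littlewood--Paley theorem, scales to $P_0$, and then (this is the same estimate \eqref{eno} proved in Section \ref{sec4}) decomposes dyadically in time and proves two bilinear bounds for the pieces $T_j(F,G)$ --- one from $L^2\times L^2$ with norm $\sim 2^j$, and one from $L^2(w^{-p})\times L^2(w^{-p})$ with norm $\sim C_\gamma(j)\,2^{j(n+\gamma-p\alpha)}\|w\|_{\mathfrak{L}^{\alpha,p}_\gamma}^p$ --- and then runs bilinear real interpolation (Lemma \ref{inter}) with $\theta=1/p$, using the identities $(L^2,L^2(w^{-p}))_{1/p,2}=L^2(w^{-1})$ and $(\ell^{s_0}_\infty,\ell^{s_1}_\infty)_{1/p,\infty}=\ell^s_\infty$. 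This ``$w^{-p}$ trick'' is precisely what gives access to the Morrey--Campanato class below the Lebesgue endpoint. Your proposal replaces all of this by a hypothetical Stein--Weiss inequality $\iint\mathcal K(X-Y)\phi(Y)\psi(X)\,dX\,dY\lesssim\|w\|_{\mathfrak{L}^{\gamma,p}_\gamma}\|\phi\|_{L^2(w^{-1})}\|\psi\|_{L^2(w^{-1})}$, which you acknowledge is ``the main obstacle'' but only gesture at via a ``dyadic level-set decomposition of $w$.'' That inequality is not in the literature (Sawyer--Wheeden, which the paper cites in Proposition \ref{prop010}, is for the isotropic class $\mathfrak{L}^{\alpha,r}$ with $r>1$, not for $\mathfrak{L}^{\gamma,p}_\gamma$ with anisotropic time scaling); proving it for $p$ strictly below $\tfrac{n+\gamma}{\gamma}$ would be comparable in difficulty to the proposition itself, and you give no route to the precise lower threshold $\tfrac{n+2(\gamma-1)}{2(\gamma-1)}$.

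There is a second, more structural, concern upstream. Once you pass to absolute values, the $L^2_x$-isometry you wish to interpolate against is no longer available (it relies on the very cancellation that taking $|\cdot|$ destroys), so ``interpolating the pointwise kernel bound against the $L^2_x$-isometry'' is not an interpolation in any standard sense. Relatedly, the ``reassembled'' kernel of $e^{-i\tau(-\Delta)^{\gamma/2}}$ does not admit a clean nonnegative convolution majorant with the simultaneous $|x|$- and $|\tau|$-decay you need: the stationary set at frequency $2^k$ lives near $|x|\sim 2^{k(\gamma-1)}|\tau|$ and moves with $k$, so summing the frequency-localized dispersive bounds produces a kernel with the $\gamma$-dispersive decay $|\tau|^{-n/\gamma}$ but without the required anisotropic $|x|$-localization uniformly in scale; for $\gamma=3,n=1$ the kernel is Airy, which decays on one side and merely oscillates on the other. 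The paper sidesteps both issues exactly by keeping $P_0$ in place (so Lemma \ref{25} yields $|K_0(x,\tau)|\lesssim(1+|(x,\tau)|)^{-n/2}$) and by interpolating at the level of dyadically time-localized bilinear forms, where the $L^2$ bound $\lesssim 2^j$ is proved by Cauchy--Schwarz on genuine $L^2$ norms, not on a positive kernel.

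So: the $\gamma=2$ half is sound, but the $\gamma>2$ half, as written, leaves the essential step unproved and relies on a reassembly/interpolation scheme that does not straightforwardly go through; the paper's proof simply invokes the already-established \eqref{eno} with $\alpha=\gamma$ together with the Littlewood--Paley step of Section~\ref{sec3}.
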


Recall here that $p=\frac{n+2}{2}$ when $\gamma=2$ is the best possible for which the homogeneous estimate \eqref{depen2}
holds for $s=0$. In view of the well-posedness for $L^2$ initial data, $p=\frac{n+2}{2}$ is therefore the correct one
for the corresponding inhomogeneous part \eqref{higher3} when $\gamma=2$.
Compared with \eqref{higher3}, we similarly have the following estimate concerning wave equations.

\begin{prop}\label{prop00}
Let $n\geq2$ and $2\leq\gamma<2n$.
Then we have
\begin{equation}\label{121}
\bigg\|\int_{0}^{t} \frac{ e^{i(t-s)\sqrt{(-\Delta)^{\gamma/2}}} }{\sqrt{(-\Delta)^{\gamma/2}}} F(\cdot,s)ds\bigg\|_{L^2(w(x,t))}
\leq C \|w\|_{\mathfrak{L}^{\gamma,p}_{\gamma/2}} \|F\|_{L^2(w(x,t)^{-1})}
\end{equation}
if $\max\{1,\frac{n-2+\gamma}{2(\gamma-1)}\}<p\leq\frac{2n+\gamma}{2\gamma}$ when $\gamma>2$, and if $p=\frac{n+1}{2}$ when $\gamma=2$.
\end{prop}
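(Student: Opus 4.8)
The plan is to deduce \eqref{121} from the homogeneous estimate of Theorem~\ref{thm2} by a $TT^*$ argument and then to restore the time cutoff $\chi_{\{s<t\}}$, the whole scheme running parallel to the treatment of \eqref{higher3} in Proposition~\ref{prop}. Put $\Gamma:=\gamma/2$, so that $\sqrt{(-\Delta)^{\gamma/2}}=(-\Delta)^{\Gamma/2}$, and fix $\sigma:=\gamma/4$. For $\gamma>2$ we have $\Gamma>1$, so Theorem~\ref{thm2} applies with order $\Gamma$ and this $\sigma$; since $2\sigma+\Gamma=\gamma$ and $4\sigma+2(\Gamma-1)=2(\gamma-1)$, it reads
\begin{equation}\label{pp00ttstar}
\big\|e^{it(-\Delta)^{\Gamma/2}}(-\Delta)^{-\gamma/8}g\big\|_{L^2(w)}\leq C\|w\|_{\mathfrak{L}^{\gamma,p}_{\gamma/2}}^{1/2}\|g\|_{L^2},
\end{equation}
valid exactly for $\max\{1,\frac{n-2+\gamma}{2(\gamma-1)}\}<p\leq\frac{2n+\gamma}{2\gamma}$; here the upper exponent is the critical one $\frac{n+\Gamma}{2\sigma+\Gamma}$, and the hypothesis $\sigma<n/2$ of Theorem~\ref{thm2} is precisely the condition $\gamma<2n$ of Proposition~\ref{prop00}, which is where that hypothesis comes from. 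When $\gamma=2$ one has $\Gamma=1$, $\sigma=1/2$, the endpoint excluded from Theorem~\ref{thm3}; there \eqref{pp00ttstar} with the single value $p=\frac{n+1}{2}$, for which $\mathfrak{L}^{\gamma,p}_{\gamma/2}=L^{(n+1)/2}(\mathbb{R}^{n+1})$, follows instead from the classical Strichartz estimate $\|e^{it\sqrt{-\Delta}}f\|_{L_{x,t}^{2(n+1)/(n-1)}}\leq C\|f\|_{\dot H^{1/2}}$ and H\"older's inequality, which is why $\gamma=2$ is singled out.

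Setting $Tg(x,t):=e^{it(-\Delta)^{\Gamma/2}}(-\Delta)^{-\gamma/8}g(x)$, so that $T\colon L^2_x\to L^2(w\,dx\,dt)$ with $\|T\|\leq C\|w\|_{\mathfrak{L}^{\gamma,p}_{\gamma/2}}^{1/2}$ by \eqref{pp00ttstar}, a direct computation gives
\begin{equation*}
TT^*G(x,t)=\int_{\mathbb{R}}\frac{e^{i(t-s)\sqrt{(-\Delta)^{\gamma/2}}}}{\sqrt{(-\Delta)^{\gamma/2}}}\,\big(w(\cdot,s)G(\cdot,s)\big)\,ds,\qquad \|TT^*\|_{L^2(w)\to L^2(w)}=\|T\|^2,
\end{equation*}
and putting $G=w^{-1}F$, for which $\|G\|_{L^2(w)}=\|F\|_{L^2(w^{-1})}$, yields the non-retarded analogue of \eqref{121}, the same inequality with $\int_0^t$ replaced by $\int_{\mathbb{R}}$.

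It remains to restore the cutoff, and this is the main obstacle. I would split $\int_0^t=\frac12\int_{\mathbb{R}}\operatorname{sgn}(t-s)\,(\cdot)\,ds+\frac12\int_{\mathbb{R}}\operatorname{sgn}(s)\,(\cdot)\,ds$. In the $\operatorname{sgn}(s)$ piece one factors out $e^{it(-\Delta)^{\Gamma/2}}$, leaving $e^{it(-\Delta)^{\Gamma/2}}\widetilde h$ with $\widetilde h:=\int_{\mathbb{R}}\operatorname{sgn}(s)\,e^{-is(-\Delta)^{\Gamma/2}}(-\Delta)^{-\gamma/4}F(\cdot,s)\,ds$; by duality $\|\widetilde h\|_{\dot H^{\gamma/4}}=\sup_{\|g\|_{L^2}=1}\big|\int_{\mathbb{R}}\operatorname{sgn}(s)\langle F(\cdot,s),Tg(\cdot,s)\rangle\,ds\big|$, so the Cauchy--Schwarz inequality and the bound for $T$ give $\|\widetilde h\|_{\dot H^{\gamma/4}}\leq C\|w\|_{\mathfrak{L}^{\gamma,p}_{\gamma/2}}^{1/2}\|F\|_{L^2(w^{-1})}$, and Theorem~\ref{thm2} (again with $\sigma=\gamma/4$) then controls this piece. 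The hard part is the $\operatorname{sgn}(t-s)$ piece $\int_{\mathbb{R}}\operatorname{sgn}(t-s)\,e^{i(t-s)(-\Delta)^{\Gamma/2}}(-\Delta)^{-\gamma/4}F(\cdot,s)\,ds$, whose partial Fourier transform in $t$ is, up to a constant, $\operatorname{p.v.}\big((-\Delta)^{\gamma/4}-\tau\big)^{-1}(-\Delta)^{-\gamma/4}$ applied to $\widehat F(\cdot,\tau)$; combined with the soft non-retarded estimate above, a bound for it is the same as a uniform-in-$\tau$ weighted bound for the boundary resolvent $\big((-\Delta)^{\gamma/4}-\tau\mp i0\big)^{-1}(-\Delta)^{-\gamma/4}$, which I would carry out on $\mathbb{R}^{n+1}$ via a Littlewood--Paley decomposition in $x$. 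Since the Morrey--Campanato weights are not $A_2$ one cannot invoke boundedness of the Hilbert transform in $t$, and since both sides of \eqref{121} lie at the $L^2_t$ endpoint the Christ--Kiselev lemma does not apply either. Instead, on each dyadic block $|\xi|\sim2^k$ one separates the non-resonant range $\big|\,|\xi|^{\gamma/2}-\tau\,\big|\gtrsim|\xi|^{\gamma/2}$, where the symbol is benign and the estimate reduces to a Stein--Weiss type inequality for a fractional integration operator against $\mathfrak{L}^{\gamma,p}_{\gamma/2}$ weights, from the resonant range $|\xi|^{\gamma/2}\sim\tau$, where the curvature of the level surface $\{|\xi|^{\gamma/2}=\tau\}$ is decisive and, after rescaling $\tau$ to a fixed value, the matter is reduced to the very oscillatory-integral estimate that underlies Theorem~\ref{thm2}; summing in $k$ then closes the argument. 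This resonant reduction, carried out uniformly in frequency, is where essentially all the difficulty lies. The degenerate endpoint $\gamma=2$, $p=\frac{n+1}{2}$, is instead handled separately through the classical inhomogeneous Strichartz theory for the wave equation.
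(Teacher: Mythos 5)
Your non-retarded estimate is correct: from Theorem~\ref{thm2} with order $\Gamma=\gamma/2$ and $\sigma=\gamma/4$, the $TT^*$ computation does give
$$\bigg\|\int_{\mathbb{R}}\frac{e^{i(t-s)\sqrt{(-\Delta)^{\gamma/2}}}}{\sqrt{(-\Delta)^{\gamma/2}}}F(\cdot,s)\,ds\bigg\|_{L^2(w)}\leq C\|w\|_{\mathfrak{L}^{\gamma,p}_{\gamma/2}}\|F\|_{L^2(w^{-1})},$$
and your treatment of the $\operatorname{sgn}(s)$ half of the cutoff via duality is fine. The $\gamma=2$ endpoint via classical wave Strichartz and H\"older is also essentially what the paper does. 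But the $\operatorname{sgn}(t-s)$ half is a genuine gap. You explicitly say this is where all the difficulty lies, and what you offer there is only a program, not a proof. Moreover, the program as stated does not obviously work: the ``uniform-in-$\tau$ weighted resolvent bound'' reduction is the standard route when $w=w(x)$ is time-independent, because only then does the partial Fourier transform in $t$ decouple the weight from the time integration. For $w=w(x,t)$ the $L^2_t(w\,dt)$ norm on both sides couples to the dual time variable, and the resolvent picture does not separate cleanly; you give no argument that it does. In addition, you yourself note that the usual two escape routes (Christ--Kiselev, or boundedness of the Hilbert transform in $t$) are unavailable here. So, as it stands, the retarded estimate is not established by your argument.

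The paper avoids this obstacle entirely by never proving the non-retarded estimate and then trying to restore the cutoff. In Section~\ref{sec4}, the bilinear form estimate \eqref{key_est_1} is stated and proved directly for the \emph{retarded} operator $\int_{-\infty}^{t}$: the dyadic time decomposition uses intervals $t-I_j=(t-2^j,t-2^{j-1}]$ lying strictly to the left of $t$, and both \eqref{L^2_est} and \eqref{L^2_weight_est} are proved for these one-sided pieces. Hence the retarded estimate \eqref{eno} (equivalently, after scaling, the frequency-localized inhomogeneous estimate \eqref{F_local_inho2}) comes out of the method for free. Proposition~\ref{prop00} is then obtained by rewriting \eqref{F_local_inho2} with $\gamma\mapsto\gamma/2$, absorbing the factor $(\sqrt{(-\Delta)^{\gamma/2}})^{-1}\sim 2^{-k\gamma/2}$ on each Littlewood--Paley block because the Fourier support of $P_kF$ lies in $|\xi|\sim 2^k$, and summing via the weighted Littlewood--Paley theorem with $\alpha=\gamma$. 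To repair your proof along your own lines you would need to actually carry out the resolvent/resonant analysis for time-dependent Morrey--Campanato weights, which is a substantial missing piece; the cleaner fix is to prove the retarded bilinear estimate directly, as the paper does.
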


Our approach to these inhomogeneous estimates is similar to the one for the homogeneous case.
But we shall also adopt a different approach based on the fractional integral
to improve the restriction $\gamma<2n$ in the case of \eqref{121} to $\gamma<3n$
(see Remark \ref{impp}).
The resulting estimates are given as follows:

\begin{prop}\label{prop010}
Let $n\geq1$ and $2\leq\gamma<3n$.
If $-\frac{(\gamma-4)n}{4(n+2)}<s<\frac12\min\{\gamma,n\}$, $1<r\leq \frac{2n}{\gamma-2s}$
and $\max\{1,\frac{n+\gamma-2}{4s+\gamma-2}\} <p\leq \frac{2n+\gamma}{4s+\gamma}$ when $2<\gamma<2n+2s$, then we have
\begin{equation}\label{fracver}
\bigg\|\int_{0}^{t}\frac{e^{i(t-s)\sqrt{(-\Delta)^{\gamma/2}}}}{\sqrt{(-\Delta)^{\gamma/2}}}F(\cdot,s)ds\bigg\|_{L^2(w(x,t))}
\leq C\|w\|_{L_t^1\mathfrak{L}^{\frac\gamma2-s,r}}^{1/2}\|w\|_{\mathfrak{L}^{2s+\gamma/2,p}_\gamma}^{1/2}\|F\|_{L^2(w(x,t)^{-1})}.
\end{equation}
When $\gamma=2$ and $n\geq2$, this holds if $\frac12<s<1$, $1<r\leq \frac{n}{1-s}$ and $\max\{1, \frac{n+1}{4s} \} <p \leq \frac{n+1}{2s+1}$.
\end{prop}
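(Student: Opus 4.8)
The plan is to realise the operator in \eqref{fracver} as a composition of a (spatial) fractional integral with the dispersive flow, by splitting the smoothing multiplier $\bigl(\sqrt{(-\Delta)^{\gamma/2}}\bigr)^{-1}=|\nabla|^{-\gamma/2}$ as $|\nabla|^{-\gamma/2}=|\nabla|^{-\theta}\,|\nabla|^{-(\gamma/2-\theta)}$ for a suitable $\theta>0$, so that for each fixed $t$
\[
\int_0^t\frac{e^{i(t-s)\sqrt{(-\Delta)^{\gamma/2}}}}{\sqrt{(-\Delta)^{\gamma/2}}}F(\cdot,s)\,ds
=|\nabla|^{-\theta}\Bigl[\int_0^t e^{i(t-s)\sqrt{(-\Delta)^{\gamma/2}}}\,|\nabla|^{-(\gamma/2-\theta)}F(\cdot,s)\,ds\Bigr]=:|\nabla|^{-\theta}\bigl[\mathcal SF(\cdot,t)\bigr].
\]
First I would dispose of $|\nabla|^{-\theta}$ slice by slice in $t$ by the weighted fractional–integration inequality of Chiarenza--Frasca / Fefferman--Phong type,
\[
\bigl\||\nabla|^{-\theta}g\bigr\|_{L^2_x(v)}\le C\,\|v\|_{\mathfrak L^{2\theta,r}(\mathbb R^n)}^{1/2}\,\|g\|_{L^2_x},\qquad 0<2\theta<n,\quad 1<r\le \tfrac n{2\theta},
\]
applied with $v=w(\cdot,t)$; squaring, integrating in $t$, and pulling out $\sup_t\|\mathcal SF(\cdot,t)\|_{L^2_x}$ produces the factor $\|w\|_{L^1_t\mathfrak L^{2\theta,r}}$. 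The match $2\theta=\tfrac\gamma2-s$ then identifies the first weight norm in \eqref{fracver}, while the admissibility $0<2\theta<n$, $1<r\le n/(2\theta)$ becomes exactly $s<\tfrac\gamma2$ (one half of $s<\tfrac12\min\{\gamma,n\}$), $\gamma<2n+2s$, and $1<r\le\tfrac{2n}{\gamma-2s}$. It is precisely this freedom to take $\theta$ as large as (nearly) $n/2$ that relaxes the restriction $\gamma<2n$ of Proposition~\ref{prop00} to $\gamma<3n$.

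For the remaining piece $\mathcal SF(\cdot,t)$ I would factor $e^{i(t-s)\sqrt{(-\Delta)^{\gamma/2}}}=e^{it\sqrt{(-\Delta)^{\gamma/2}}}\,e^{-is\sqrt{(-\Delta)^{\gamma/2}}}$, use unitarity of $e^{it\sqrt{(-\Delta)^{\gamma/2}}}$ on $L^2_x$ to reduce $\|\mathcal SF(\cdot,t)\|_{L^2_x}$ to $\bigl\|\int_0^t e^{-is\sqrt{(-\Delta)^{\gamma/2}}}|\nabla|^{-(\gamma/2-\theta)}F(\cdot,s)\,ds\bigr\|_{L^2_x}$, and pass to the non‑retarded pairing by duality against $h\in L^2_x$:
\[
\sup_t\|\mathcal SF(\cdot,t)\|_{L^2_x}\ \le\ \|F\|_{L^2(w^{-1})}\ \sup_{\|h\|_{L^2}=1}\bigl\||\nabla|^{-(\gamma/2-\theta)}e^{is\sqrt{(-\Delta)^{\gamma/2}}}h\bigr\|_{L^2_{x,s}(w)}.
\]
The last quantity is a homogeneous smoothing estimate for the flow $e^{is\sqrt{(-\Delta)^{\gamma/2}}}$, i.e. Theorem~\ref{thm2} for the parameter value $\gamma/2$ when $\gamma>2$ (whence the hypothesis $2<\gamma<2n+2s$ and the lower bound $s>-\tfrac{(\gamma-4)n}{4(n+2)}$, together with the $p$‑range $\max\{1,\tfrac{n+\gamma-2}{4s+\gamma-2}\}<p\le\tfrac{2n+\gamma}{4s+\gamma}$), and Theorem~\ref{thm3} (the half‑wave case) when $\gamma=2$ (whence $\max\{1,\tfrac{n+1}{4s}\}<p\le\tfrac{n+1}{2s+1}$, the extra constraint $s<1$ being just $\tfrac\gamma2-s>0$); after a manipulation of the Morrey--Campanato classes of the two different anisotropies this is recast in the form $\|w\|_{\mathfrak L^{2s+\gamma/2,p}_\gamma}^{1/2}$. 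The step $\sup_t\|\int_0^t\cdots\|_{L^2_x}$ is also how the retarded cut‑off $\int_0^t$ is disposed of: both time exponents are $L^2$, so Christ--Kiselev is not available at this endpoint, and the $L^2_x$‑conservation of the free evolution is the substitute. The intermediate $1<r\le \frac n{1-s}=\frac{n}{1-s}$ (equivalently the condition on $r$ in \eqref{fracver}) and the range of $p$ in the $\gamma=2$ clause are then read off directly from Theorem~\ref{thm3}.

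The step I expect to be the main obstacle is not any single ingredient but the homogeneity bookkeeping: one must propagate the scaling of the weight consistently through the fractional–integration step (isotropic in $x$, carried out on $t$‑slices, producing an $L^1_t\mathfrak L^{\,\cdot\,,r}$ factor) and through the smoothing step (adapted to $(x,t)\mapsto(\lambda x,\lambda^{\gamma/2}t)$), so that the \emph{product} of the two resulting norms is exactly $\|w\|_{L^1_t\mathfrak L^{\gamma/2-s,r}}^{1/2}\|w\|_{\mathfrak L^{2s+\gamma/2,p}_\gamma}^{1/2}$ as stated; this is where the overlap of the two constraint sets forces the hypothesis $-\tfrac{(\gamma-4)n}{4(n+2)}<s<\tfrac12\min\{\gamma,n\}$. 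The one classical input that has to be invoked with care is the weighted fractional–integration inequality above in the Morrey--Campanato class $\mathfrak L^{2\theta,r}$ with the sharp exponent $r$; the failure of that inequality at $r=1$ (the Fefferman--Phong gap) is exactly the reason for the strict inequality $r>1$ in the statement.
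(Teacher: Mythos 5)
Your plan matches the paper's proof in Subsection~5.3 step for step: split $|\nabla|^{-\gamma/2}$ into a spatial fractional-integral factor and a residual smoothing power, apply the Sawyer--Wheeden weighted $L^2$ inequality slice-wise in $t$ (yielding the $L_t^1\mathfrak{L}^{\cdot,r}$ factor), and dispose of the remaining retarded integral via unitarity of $e^{it\sqrt{(-\Delta)^{\gamma/2}}}$ and duality against the homogeneous smoothing estimate, i.e.\ Theorem \ref{thm2} (or Theorem \ref{thm3}) with $\gamma$ replaced by $\gamma/2$. The ingredients and their assembly coincide.

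The gap is exactly in the bookkeeping you flagged as delicate. Having set $2\theta=\tfrac{\gamma}{2}-s$, the residual power you feed into the smoothing step is $\tfrac{\gamma}{2}-\theta=\tfrac{\gamma}{4}+\tfrac{s}{2}$, \emph{not} $s$. But the lower bound $s>-\tfrac{(\gamma-4)n}{4(n+2)}$ and the $p$-range $\max\{1,\tfrac{n+\gamma-2}{4s+\gamma-2}\}<p\le\tfrac{2n+\gamma}{4s+\gamma}$ you then quote are what Theorem \ref{thm2} (with $\gamma\mapsto\gamma/2$) produces only at smoothing parameter $\sigma=s$; applying it at $\sigma=\tfrac{\gamma}{4}+\tfrac{s}{2}$, as your splitting demands, instead gives $p\le\tfrac{2n+\gamma}{2\gamma+2s}$, a second Morrey--Campanato index $\gamma+s$ rather than $2s+\tfrac{\gamma}{2}$, and the constraint $\tfrac{\gamma}{4}+\tfrac{s}{2}<\tfrac{n}{2}$ rather than $s<\tfrac{n}{2}$ --- none of which matches \eqref{fracver}. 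The paper avoids this by taking the fractional-integral order itself to be $\alpha=\tfrac{\gamma}{2}-s$ (so the residual is exactly $s$), and it records the Sawyer--Wheeden estimate as $\|I_\alpha f\|_{L^2(w)}\le C\|w\|_{\mathfrak{L}^{\alpha,r}}^{1/2}\|f\|_{L^2}$, $1<r\le n/\alpha$, which is \emph{not} the scaling-invariant form you wrote: dimensional analysis (and the Fefferman--Phong case $\alpha=1$ with Morrey index $2$) points to $\|I_\alpha f\|_{L^2(w)}\le C\|w\|_{\mathfrak{L}^{2\alpha,r}}^{1/2}\|f\|_{L^2}$ with $1<r\le n/(2\alpha)$, as you have it. With that form, no single $\theta$ makes both the stated first factor $\|w\|_{L_t^1\mathfrak{L}^{\gamma/2-s,r}}$, $1<r\le\tfrac{2n}{\gamma-2s}$, and the stated second factor with its $p$-range come out simultaneously, so this exponent has to be pinned down by an explicit check against \cite{SW} rather than deferred to a homogeneity remark.
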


\begin{rem}\label{impp}
Given $2\leq\gamma<3n$, one can easily check that there are $s,r,p,\gamma$ satisfying the conditions in the proposition.
\end{rem}

\subsection{Main ideas}

We shall now give an outline of the main ideas in our work.

The known approach to Strichartz and smoothing estimates with time-independent weights
is based on weighted $L^2$ bounds for the resolvent operator as well as for the Fourier restriction
(see, for example, \cite{RV2,Su,BBCRV,BBRV,S2}),
but it is no longer available in the case of time-dependent weights.
Our method that works for this case is completely different from it and is a more fruitful approach
which is based on a combination between two kind of arguments, one from the theory of dispersive estimates
and the other one from weighted inequalities.
Even if these two kind of ideas ($TT^*$ argument and bilinear approach for dispersive estimates in one hand
and maximal functions with $A_p$ class for weighted estimates) are relatively well-known, this combination
seems original and new.
This is done in several steps:

\smallskip

\noindent\textit{Frequency localization based on maximal functions of weights.}
To obtain our estimates in this paper, the first step is to work on spatial Fourier transform side
by using the Littlewood-Paley theorem on weighted $L^2$ spaces with Muckenhoupt $A_2$ weights.
A key observation in this step is to remove this $A_2$ assumption, $w(\cdot,t)\in A_2(\mathbb{R}^n)$,
when applying the theorem.
For this we make use of a useful property (Lemma \ref{lem2'}) of $n$-dimensional maximal functions
$w_*(x,t)=(M(w(\cdot,t)^q)(x))^{1/q}$ of Morrey-Campanato type weights $w$.
Here, $M(f)(x)$ denotes the usual Hardy-Littlewood maximal function of $f$.
Such a property says that
$\|w_*\|_{\mathfrak{L}^{\alpha,p}_{\gamma}}\leq C\|w\|_{\mathfrak{L}^{\alpha,p}_{\gamma}}$ 
and $w_\ast(\cdot,t)\in A_2(\mathbb{R}^n)$ uniformly in $t\in\mathbb{R}$ and $w$.
Now, since $w\leq w_\ast$ and $\|w_*\|_{\mathfrak{L}^{\alpha,p}_{\gamma}}\leq C\|w\|_{\mathfrak{L}^{\alpha,p}_{\gamma}}$,
it suffices to show the estimates replacing $w$ with $w_\ast$.
So one may assume $w(\cdot,t)\in A_2(\mathbb{R}^n)$ for simplicity and can now apply the Littlewood-Paley theorem
without assuming the $A_2$ weight.
This finally leads us to estimating a number of frequency-localized pieces like
    \begin{equation}\label{freq_par00}
    \big\|e^{it(-\Delta)^{\gamma/2}} P_k f \big\|_{L^2(w(x,t))}\leq C2^{k(\alpha -\gamma)/2}
    \|w\|_{\mathfrak{L}^{\alpha,p}_{\gamma}}^{1/2} \|f\|_{L^2}
    \end{equation}
in Proposition \ref{Prop_F_local},
where $P_k$ is the Littlewood-Paley projection.
This frequency localization approach
based on such a useful property of maximal functions of Morrey-Campanato type weights
is the key ingredient in our argument which allows us to take advantage of localization in Fourier transform side.
See Section \ref{sec3} for details.

\smallskip

\noindent\textit{$TT^*$ argument and bilinear interpolation.}
The next step is devoted to proving \eqref{freq_par00}
whose proof is based on a combination of the usual $TT^*$ argument
and the bilinear interpolation.
We shall give here a brief description of this step. See Section \ref{sec4} for details.
Notice first that we only need to show the case $k=0$ in \eqref{freq_par00} by the scaling.
By the $TT^*$ argument and duality we are then reduced to showing the bilinear form estimate \eqref{key_est_1},
    \begin{equation*}
    \bigg|\bigg\langle\int_{-\infty}^te^{i(t-s)(-\Delta)^{\gamma/2}}P_0^2F(\cdot,s)ds,G(x,t)\bigg\rangle_{L_{x,t}^2}\bigg|
    \leq C\|w\|_{\mathfrak{L}^{\alpha,p}_{\gamma}} \|F\|_{L^2(w^{-1})}\|G\|_{L^2(w^{-1})}.
    \end{equation*}
Next we decompose dyadically the inner product on $L_{x,t}^2$ in time to get
    \begin{align*}
    \nonumber\bigg\langle\int_{-\infty}^{t}&e^{i(t-s)(-\Delta)^{\gamma/2}}P_0^2F(\cdot,s)ds,G(x,t)\bigg\rangle_{L_{x,t}^2}\\
    =&\sum_{j\geq0}\int_{\mathbb{R}}\int_{t- I_j}\Big\langle e^{i(t-s)(-\Delta)^{\gamma/2}}P_0^2 F(\cdot,s),G(x,t)\Big\rangle_{L^2_x}dsdt
    :=\sum_{j\geq0}T_j(F,G),
    \end{align*}
where $t-I_j=(t-2^j, t-2^{j-1}]$ for $j\geq1$, and $t-I_0=(t-1,t]$.
In Proposition \ref{tlocal},
we then obtain two bounds for these time-localized pieces, $T(F,G)=\{T_j(F,G)\}_{j\geq0}$,
from $L^2\times L^2$ to $\ell_\infty^{s_0}$ in one hand
and from $L^2(w^{-p})\times L^2(w^{-p})$ to $\ell_\infty^{s_1}$,
with the operator norms $C$ and
$C\|w\|_{\mathfrak{L}^{\alpha,p}_{\gamma}}^p$, respectively.
Here, $s_0=-1$ and $s_1=s_1(\alpha,p,\gamma,n)$.
($s_1=-(n/2+\gamma-p\alpha)$ if, for example, $\gamma>1$.)
Hence the bilinear interpolation (Lemma \ref{inter}), with $\theta=1/p$, $q=\infty$ and $p_1=p_2=2$, implies
\begin{equation}\label{bibi2}
T:(L^2,L^2(w^{-p}))_{1/p,2}\times(L^2,L^2(w^{-p}))_{1/p,2}
\rightarrow(\ell_\infty^{s_0},\ell_\infty^{s_1})_{1/p,\infty}
\end{equation}
with the operator norm
$C\|w\|_{\mathfrak{L}^{\alpha,p}_{\gamma}}$ for $p>1$.
Finally, by applying the real interpolation space identities in Lemma \ref{id} to \eqref{bibi2},
one can get
$$T:L^2(w^{-1})\times L^2(w^{-1})\rightarrow\ell_\infty^{s}$$
with some $s(\alpha,p,\gamma,n)$
and the operator norm $C\|w\|_{\mathfrak{L}^{\alpha,p}_{\gamma}}$ for $p>1$,
which is
    $$
    \begin{aligned}
    \bigg|\int_{\mathbb{R}}\int_{t- I_j} \Big\langle e^{i(t-s)(-\Delta)^{\gamma/2}} &P_0^2 F(\cdot,s),G(x,t)\Big\rangle dsdt\bigg| \\
    \leq C&2^{-js(\alpha,p,\gamma,n)} \|w\|_{\mathfrak{L}^{\alpha,p}_{\gamma}} \|F\|_{L^2(w^{-1})}\|G\|_{L^2(w^{-1})}
    \end{aligned}
    $$
for $p>1$.
By summing this over $j\geq0$, the bilinear form estimate follows
under $p>1$ and $s(\alpha,p,\gamma,n)>0$ which determine the conditions on $\alpha,p,\gamma,n$ for which
our estimates hold.

\medskip

Finally, let us sketch the organization of the paper.
In Section \ref{sec2} we present some preliminary lemmas which are used for the proof of Theorems \ref{thm2} and \ref{thm3}
that are proved in Sections \ref{sec3} and \ref{sec4} as described above.
In Section \ref{sec5} we prove Propositions \ref{prop}, \ref{prop00} and \ref{prop010}
which are additionally needed to obtain our well-posedness results in Section \ref{sec6} for Schr\"odinger and wave equations.
The final section, Section \ref{sec7} is devoted to proving Proposition \ref{prop0}
and giving further applications to linearized KdV type equations.

Throughout this paper, the letter $C$ stands for a positive constant which may be different
at each occurrence. We also denote by $\langle f,g\rangle$ the usual inner product of $f,g$ on $L^2$,
and denote $A\lesssim B$ and $A\sim B$ to mean $A\leq CB$ and $CB\leq A\leq CB$, respectively,
with unspecified constants $C>0$.


\section{Preliminaries}\label{sec2}

In this section we present some preliminary lemmas which will be used in later sections for the proof of Theorems \ref{thm2} and \ref{thm3}.

\subsection{Real interpolation spaces}
Given two complex Banach spaces $A_0$ and $A_1$, for $0<\theta<1$ and $1\leq q\leq\infty$,
we denote by $(A_0,A_1)_{\theta,q}$
the real interpolation spaces equipped with the norms
$$\|a\|_{(A_0,A_1)_{\theta,\infty}}=\sup_{0<t<\infty}t^{-\theta}K(t,a)$$
and
$$\|a\|_{(A_0,A_1)_{\theta,q}}=\big(\int_0^\infty(t^{-\theta}K(t,a))^q\frac{dt}{t}\big)^{1/q},\quad 1\leq q<\infty,$$
where
$$K(t,a)=\inf_{a=a_0+a_1}\|a_0\|_{A_0}+t\|a_1\|_{A_1}$$
for $0<t<\infty$ and $a\in A_0+A_1$.
In particular, $(A_0,A_1)_{\theta,q}=A_0=A_1$ if $A_0=A_1$.
See ~\cite{BL,T} for details.

We recall here two existing results concerning these real interpolation spaces.
The first one is the following bilinear interpolation lemma
which is well-known (see \cite{BL}, Section 3.13, Exercise 5(a)).

\begin{lem}\label{inter}
For $i=0,1$, let $A_i,B_i,C_i$ be Banach spaces and let $T$ be a bilinear operator such that
$$T:A_0\times B_0\rightarrow C_0$$
and
$$T:A_1\times B_1\rightarrow C_1.$$
Let $0<\theta<1$. Then one has
$$T:(A_0,A_1)_{\theta,p_1}\times(B_0,B_1)_{\theta,p_2}\rightarrow(C_0,C_1)_{\theta,q}$$
if\, $1\leq q\leq\infty$ and $1/q=1/p_1+1/p_2-1$.
\end{lem}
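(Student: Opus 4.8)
The plan is to reduce the bilinear statement to the known linear interpolation theorem for the $K$-method by fixing one variable at a time, and then to reconcile the two resulting parameter restrictions via a convexity (Hölder-in-$t$) argument on the $K$-functionals. First I would recall that the linear version is classical: if $S\colon A_0\to C_0$ and $S\colon A_1\to C_1$ are bounded with norms $M_0,M_1$, then $S\colon (A_0,A_1)_{\theta,q}\to(C_0,C_1)_{\theta,q}$ with norm $\lesssim M_0^{1-\theta}M_1^{\theta}$; this follows directly from the subadditivity $K(t,Sa;C_0,C_1)\le M_0\,K((M_1/M_0)t,a;A_0,A_1)$ and a change of variables in the defining integral. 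The goal is to upgrade this to the genuinely bilinear and off-diagonal ($1/q=1/p_1+1/p_2-1$) statement.

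The key step is the estimate of the $K$-functional of $T(a,b)$ in $(C_0,C_1)$ in terms of those of $a$ in $(A_0,A_1)$ and $b$ in $(B_0,B_1)$. Given decompositions $a=a_0+a_1$ and $b=b_0+b_1$, bilinearity gives
$$T(a,b)=T(a_0,b_0)+\big(T(a_0,b_1)+T(a_1,b_0)+T(a_1,b_1)\big),$$
where the first term lies in $C_0$ (with norm $\le \|T\|_{A_0\times B_0\to C_0}\|a_0\|_{A_0}\|b_0\|_{B_0}$) and, since each factor in the remaining three terms has at least one index equal to $1$, using boundedness $A_1\times B_1\to C_1$ together with the continuous inclusions $A_0\cap A_1\hookrightarrow A_1$, $B_0\cap B_1\hookrightarrow B_1$, the bracketed term lies in $C_1$ with norm controlled by a sum of products $\|a_i\|_{A_i}\|b_j\|_{B_j}$ over $(i,j)\neq(0,0)$. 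Taking the infimum over the decompositions of $a$ and $b$ and inserting a scaling parameter to balance the two homogeneities yields, for suitable $s,u>0$ with $su=t$,
$$K\big(t,T(a,b);C_0,C_1\big)\ \lesssim\ K(s,a;A_0,A_1)\,K(u,b;B_0,B_1).$$
Then I would plug $a\in(A_0,A_1)_{\theta,p_1}$ and $b\in(B_0,B_1)_{\theta,p_2}$ into the norm of $T(a,b)$ in $(C_0,C_1)_{\theta,q}$, split $t^{-\theta}=s^{-\theta}u^{-\theta}$, and apply Hölder's inequality in the multiplicative Haar measure $dt/t$ with exponents $p_1,p_2$ (and the remaining mass going to the $L^q$ outer norm), which is exactly where the numerology $1/q=1/p_1+1/p_2-1$ enters; the case $q=\infty$, $p_1=p_2=\infty$ is handled by a direct supremum estimate, and the endpoint cases where some $p_i=\infty$ follow by the same split with the corresponding factor pulled out as a sup. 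Since in the paper's application one has $p_1=p_2=2$, $q=\infty$, $\theta=1/p$, only this single instance is actually needed, but the argument is uniform.

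The main obstacle is purely bookkeeping: organizing the mixed terms $T(a_i,b_j)$ so that the scaling parameter $s$ can be chosen consistently to produce a clean product bound on the $K$-functional, and then getting the Hölder split of $\int_0^\infty (s^{-\theta}u^{-\theta}K(s,a)K(u,b))^{\,\cdot}\,dt/t$ to land on the correct exponent $q$. There is nothing deep here — it is the standard proof of \cite[\S3.13, Ex.~5(a)]{BL} — so I would cite that reference for the general statement and, if a self-contained argument is wanted, present only the specialization $p_1=p_2=2$, $q=\infty$ needed for \eqref{bibi2}, where the Hölder step degenerates to Cauchy–Schwarz in $dt/t$ followed by a supremum.
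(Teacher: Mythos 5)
The paper does not actually prove Lemma~\ref{inter}; it cites it as a known exercise in Bergh--L\"ofstr\"om (\S3.13, Ex.~5(a)), so there is no internal proof to compare against. Your citation-only fallback is therefore in line with the paper, but the proof sketch you offer as ``the standard proof'' has a genuine gap, and I would not want it to go into print as is.

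The problem is in the $K$-functional estimate. You write $T(a,b)=T(a_0,b_0)+T(a_0,b_1)+T(a_1,b_0)+T(a_1,b_1)$ with $a_i\in A_i$, $b_j\in B_j$, and assert that the last three terms lie in $C_1$ ``using boundedness $A_1\times B_1\to C_1$ together with the continuous inclusions $A_0\cap A_1\hookrightarrow A_1$, $B_0\cap B_1\hookrightarrow B_1$.'' But $a_0$ is only in $A_0$, not in $A_0\cap A_1$, so that inclusion is not available, and there is simply no bound on the mixed terms $T(a_0,b_1)$ and $T(a_1,b_0)$: neither of the two hypotheses $T\colon A_0\times B_0\to C_0$ and $T\colon A_1\times B_1\to C_1$ applies to a mixed pair. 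Trying to assign these two terms to $C_0$ fails for the same reason. This is not bookkeeping --- it is precisely why the naive $K$-functional argument does not prove bilinear interpolation, and why the ``fix one variable and interpolate linearly'' reduction you mention at the start also breaks down (to interpolate $T(\cdot,b_0)$ between $A_0\to C_0$ and $A_1\to C_1$ you would need $b_0\in B_0\cap B_1$, which the $K$-decomposition of $b$ does not give you).

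The argument that actually works, and is the intended solution to the cited exercise, uses the $J$-method. Decompose $a=\sum_\nu a_\nu$ and $b=\sum_\mu b_\mu$ with $a_\nu\in A_0\cap A_1$ and $b_\mu\in B_0\cap B_1$; then every term $T(a_\nu,b_\mu)$ genuinely lies in $C_0\cap C_1$, with $\|T(a_\nu,b_\mu)\|_{C_0}\le M_0\|a_\nu\|_{A_0}\|b_\mu\|_{B_0}$ and $\|T(a_\nu,b_\mu)\|_{C_1}\le M_1\|a_\nu\|_{A_1}\|b_\mu\|_{B_1}$, which gives the submultiplicative bound
\[
J\big(sr,\,T(a_\nu,b_\mu);C_0,C_1\big)\le \max(M_0,M_1)\,J(s,a_\nu;A_0,A_1)\,J(r,b_\mu;B_0,B_1).
\]
Setting $c_\kappa=\sum_{\nu+\mu=\kappa}T(a_\nu,b_\mu)$ turns $\{2^{-\kappa\theta}J(2^\kappa,c_\kappa)\}_\kappa$ into (up to constants) the discrete convolution of $\{2^{-\nu\theta}J(2^\nu,a_\nu)\}_\nu$ and $\{2^{-\mu\theta}J(2^\mu,b_\mu)\}_\mu$, and Young's inequality $\ell^{p_1}*\ell^{p_2}\hookrightarrow\ell^q$ is exactly where $1/q=1/p_1+1/p_2-1$ (with $q\ge1$) enters. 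If you want a self-contained argument for the single instance needed in the paper ($p_1=p_2=2$, $q=\infty$, $\theta=1/p$), it should be built on this $J$-method/convolution scheme, not on the $K$-functional split.
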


For $s\in\mathbb{R}$ and $1\leq q\leq\infty$,
let $\ell^s_q$ denote the weighted sequence space with the norm
$$\|\{x_j\}_{j\geq0} \|_{\ell^s_q}=
\begin{cases}
\big(\sum_{j\geq0}2^{jsq}|x_j|^q\big)^{1/q}
\quad\text{if}\quad q\neq\infty,\\
\,\sup_{j\geq0}2^{js}|x_j|
\quad\text{if}\quad q=\infty.
\end{cases}$$
Then the second lemma concerns some useful identities of real interpolation spaces of weighted spaces
(see Theorems 5.4.1 and 5.6.1 in \cite{BL}):

\begin{lem}\label{id}
Let $0<\theta<1$. Then one has
$$( L^{2} (w_0), L^{2} (w_1) )_{\theta,2} = L^2(w),\quad w= w_0^{1-\theta} w_1^{\theta},$$
and for $1\leq q_0,q_1,q\leq\infty$ and  $s_0\neq s_1$,
$$(\ell^{s_0}_{q_0}, \ell^{s_1}_{q_1} )_{\theta, q}=\ell^s_q,\quad s= (1-\theta)s_0 + \theta s_1.$$
\end{lem}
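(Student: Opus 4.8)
Both identities in Lemma~\ref{id} are classical (they are recorded as Theorems 5.4.1 and 5.6.1 in \cite{BL}); the plan is to reduce each to a one-dimensional computation of the $K$-functional. For the first identity, I would first remove the weights by a simultaneous change of density: writing $v=w_1/w_0$, multiplication by $w_0^{1/2}$ is an isometric isomorphism of the couple $(L^2(w_0),L^2(w_1))$ onto $(L^2,L^2(v))$, and since the real method commutes with couple isomorphisms it suffices to prove $(L^2,L^2(v))_{\theta,2}=L^2(v^{\theta})$. For this Hilbert couple the $K$-functional is computed exactly by pointwise minimization of $|f_0|^2+t^2v|f-f_0|^2$, giving
$$K(t,f)^2=\int_{\mathbb{R}^{n+1}}\frac{t^2v(z)}{1+t^2v(z)}\,|f(z)|^2\,dz.$$
Substituting this into the $(\theta,2)$-norm, interchanging the order of integration by Fubini, and evaluating the inner integral $\int_0^\infty t^{1-2\theta}\,v(1+t^2v)^{-1}\,dt$ by the substitution $u=t^2v$ --- which produces the Beta-type integral $\int_0^\infty u^{-\theta}(1+u)^{-1}\,du$, convergent precisely because $0<\theta<1$ --- yields $\|f\|_{(\theta,2)}^2=c_\theta\|f\|_{L^2(v^{\theta})}^2$. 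Pulling this back by multiplication by $w_0^{-1/2}$ identifies $L^2(v^{\theta})$ with $L^2(w_0v^{\theta})=L^2(w_0^{1-\theta}w_1^{\theta})$, as claimed.

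For the second identity, the point is that $\ell^{s_i}_{q_i}$ is the $q_i$-sum over $j$ of the one-dimensional couples $(\mathbb{C},\mathbb{C})$ with norms $2^{js_0}|\cdot|$ and $2^{js_1}|\cdot|$. I would first reduce to the case $q_0=q_1$: from the elementary nesting $\ell^{s_i}_1\hookrightarrow\ell^{s_i}_{q_i}\hookrightarrow\ell^{s_i}_\infty$ together with the monotonicity of the real interpolation functor, it is enough to show $(\ell^{s_0}_r,\ell^{s_1}_r)_{\theta,q}=\ell^s_q$ for $r=1$ and $r=\infty$, with $s=(1-\theta)s_0+\theta s_1$. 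For such a diagonal couple the $K$-functional is, up to an absolute constant, the $r$-sum over $j$ of the scalar $K$-functionals $\min(2^{js_0},t\,2^{js_1})\,|x_j|$; substituting this into the $(\theta,q)$-norm, splitting the $t$-integral into the two regimes in which the minimum is attained and then discretizing dyadically, and summing the resulting geometric series in the shift index --- which converges exactly because $s_0\neq s_1$ --- gives $\|x\|_{(\theta,q)}\sim\|x\|_{\ell^s_q}$, uniformly in $r$. Combining the $r=1$ and $r=\infty$ endpoints with the sandwich from the first step yields $\ell^s_q\hookrightarrow(\ell^{s_0}_{q_0},\ell^{s_1}_{q_1})_{\theta,q}\hookrightarrow\ell^s_q$, hence equality.

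The genuinely delicate point is in the second identity. First, one must explain why the fine indices $q_0,q_1$ at the endpoints play no role: this is exactly the sandwiching step, and it works only because the weight exponents are distinct --- indeed, for $s_0=s_1$ the two endpoint spaces carry the same weight and the interpolation space really does depend on $q_0,q_1$, so the hypothesis $s_0\neq s_1$ cannot be dropped. Second, one has to handle the endpoint conventions: when $q$ or one of the $q_i$ is $\infty$, the $K$-functional of an $\ell^\infty$-sum of couples is only comparable (not equal) to the supremum of the coordinate $K$-functionals, so one obtains equivalent rather than identical norms --- which is all that is required, since real interpolation spaces are defined only up to equivalence of norms. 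The remaining steps --- the two $K$-functional formulas, the Fubini interchange, and the dyadic bookkeeping --- are routine.
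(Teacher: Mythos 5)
Your proof is correct. The paper does not give its own argument for this lemma but simply cites Theorems~5.4.1 and 5.6.1 in \cite{BL}, so there is no in-paper proof to compare against; what you present is a self-contained verification via direct $K$-functional computations, which for the $L^2$ couple is more elementary than the power-theorem route used in \cite{BL}. Two small points. First, the exact pointwise formula you derive, $K(t,f)^2=\int \frac{t^2v}{1+t^2v}\,|f|^2\,dz$, is really that of the quadratic functional $K_2(t,f)=\inf_{f=f_0+f_1}\bigl(\|f_0\|_{A_0}^2+t^2\|f_1\|_{A_1}^2\bigr)^{1/2}$ rather than of Peetre's $K$; since $K_2\le K\le\sqrt{2}\,K_2$ universally, the resulting $(\theta,2)$-norms are equivalent, which is all that is needed, but it would be cleaner to say so explicitly rather than call it ``the $K$-functional.'' Second, in the sequence-space identity your sandwich $\ell^{s_i}_1\hookrightarrow\ell^{s_i}_{q_i}\hookrightarrow\ell^{s_i}_\infty$ together with monotonicity of the real-interpolation functor correctly reduces the problem to the diagonal cases $r\in\{1,\infty\}$, and you are right both that the $\ell^\infty$-sum $K$-functional is only two-sided comparable (with constant $2$) to the supremum of coordinate $K$-functionals and that the geometric-series convergence is exactly where $s_0\ne s_1$ enters; those are precisely the points that need care, and your argument handles them correctly.
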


\subsection{Maximal functions of Morrey-Campanato weights}
Let us first recall that a weight\footnote{\,It is a locally integrable function
which is allowed to be zero or infinite only on a set of Lebesgue measure zero.}
$w:\mathbb{R}^n\rightarrow[0,\infty]$
is said to be in the Muckenhoupt $A_2(\mathbb{R}^n)$ class if there is a constant $C_{A_2}$ such that
\begin{equation*}
\sup_{Q\text{ cubes in }\mathbb{R}^{n}}
\bigg(\frac1{|Q|}\int_{Q}w(x)dx\bigg)\bigg(\frac1{|Q|}\int_{Q}w(x)^{-1}dx\bigg)<C_{A_2}.
\end{equation*}
(For details, see, for example, \cite{G}.)
In the following lemma we present a useful property of weights
in $\gamma$-order anisotropic Morrey-Campanato classes $\mathfrak{L}_\gamma^{\alpha,p}$
regarding their $n$-dimensional maximal functions
$$w_*(x,t)=(M(w(\cdot,t)^q)(x))^{1/q},$$
where $M(f)(x)$ denotes the usual Hardy-Littlewood maximal function of $f$.

\begin{lem}\label{lem2'}
For a weight $w\in\mathfrak{L}^{\alpha,p}_{\gamma}$ on $\mathbb{R}^{n+1}$,
let $w_*(x,t)$ be the $n$-dimensional maximal function given by
$$w_*(x,t)=\sup_{Q'}\bigg(\frac{1}{|Q'|}\int_{Q'}w(y,t)^q dy\bigg)^{\frac{1}{q}},\quad q>1,$$
where $Q'$ denotes a cube in $\mathbb{R}^n$ with center $x$.
Then, if $\alpha>\gamma/p$ and $p>q$, we have
$$\|w_*\|_{\mathfrak{L}^{\alpha,p}_{\gamma}}\leq C\|w\|_{\mathfrak{L}^{\alpha,p}_{\gamma}}$$
and $w_\ast(\cdot,t)\in A_2(\mathbb{R}^n)$ with a constant $C_{A_2}$ which is uniform in almost every $t\in\mathbb{R}$
and independent of $w$.
\end{lem}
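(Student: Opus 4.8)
The plan is to prove the two assertions separately, starting from the elementary fact that for a.e.\ fixed $t$ the slice $w(\cdot,t)$ is a locally integrable function on $\mathbb{R}^n$ whose local $L^p$-averages are controlled by $\|w\|_{\mathfrak{L}^{\alpha,p}_\gamma}$. First I would record the pointwise consequence of the defining norm: for every cube $Q(x,r)$ and interval $I(t,r^\gamma)$,
\begin{equation*}
\Big(\frac{1}{r^{n+\gamma}}\int_{Q(x,r)\times I(t,r^\gamma)}w^p\,dy\,ds\Big)^{1/p}\leq r^{-\alpha}\|w\|_{\mathfrak{L}^{\alpha,p}_\gamma}.
\end{equation*}
Averaging in $s$ only, this gives for a.e.\ $t$ a bound of the form $\frac{1}{r^{n}}\int_{Q(x,r)}\big(\fint_{I(t,r^\gamma)}w(y,s)^p\,ds\big)\,dy\lesssim r^{-p\alpha}\|w\|^p$, which is exactly the statement that the "$s$-averaged" slice lies in the (spatial) Morrey space $\mathfrak{L}^{p\alpha/?,p}$ with a uniform-in-$t$ norm; more usefully, by the Lebesgue differentiation theorem in $s$ one gets directly that $w(\cdot,t)\in L^p_{\mathrm{loc}}$ with $\big(\fint_{Q(x,r)}w(y,t)^p\,dy\big)^{1/p}\lesssim r^{-\alpha}\|w\|_{\mathfrak{L}^{\alpha,p}_\gamma}$ for a.e.\ $t$, i.e.\ each slice is in the $n$-dimensional Morrey class $\mathfrak{L}^{\alpha,p}(\mathbb{R}^n)$ with norm controlled by $\|w\|_{\mathfrak{L}^{\alpha,p}_\gamma}$.

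For the norm bound $\|w_*\|_{\mathfrak{L}^{\alpha,p}_\gamma}\leq C\|w\|_{\mathfrak{L}^{\alpha,p}_\gamma}$, I would fix a cube $Q(x,r)$ and an interval $I(t,r^\gamma)$ and estimate $\int_{Q(x,r)\times I(t,r^\gamma)}w_*(y,s)^p\,dy\,ds$. Since $w_*(y,s)=(M(w(\cdot,s)^q)(y))^{1/q}$ and $p>q$, Fubini reduces the matter to bounding, for each fixed $s$, the quantity $\int_{Q(x,r)}M(w(\cdot,s)^q)(y)^{p/q}\,dy$. This is a standard Morrey-space estimate for the Hardy--Littlewood maximal operator: split $w(\cdot,s)^q = (w\chi_{Q(x,2r)})^q + (w\chi_{Q(x,2r)^c})^q$; for the local part use the strong $(p/q,p/q)$ bound for $M$ (valid since $p/q>1$) together with the slice Morrey estimate above to get a contribution $\lesssim r^{n-p\alpha}\|w\|^p$; for the far-away part, use the pointwise decay $M((w\chi_{Q(x,2r)^c})^q)(y)\lesssim \sum_{j\geq 1}2^{-jn}\fint_{Q(x,2^{j+1}r)}w(\cdot,s)^q$ for $y\in Q(x,r)$, bound each average by $(2^jr)^{-q\alpha}\|w\|^q$ via the slice Morrey estimate, and sum the geometric series in $j$ (which converges precisely because $\alpha>0$, so $2^{-jn}(2^jr)^{-q\alpha}$ is summable — actually only $\alpha>0$ is needed here, and the hypothesis $\alpha>\gamma/p$ is what is really used elsewhere, see below). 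Integrating the resulting bound $w_*(\cdot,s)^p$-estimate over $s\in I(t,r^\gamma)$ contributes the factor $r^\gamma$, giving $\int_{Q\times I}w_*^p\lesssim r^{n+\gamma-p\alpha}\|w\|^p$, which is the desired inequality after taking the $p$-th root and multiplying by $r^\alpha$.

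For the uniform $A_2$ claim, the key point is the classical fact that for any nonnegative $g\in L^1_{\mathrm{loc}}(\mathbb{R}^n)$ and any $0<\delta<1$, the function $(Mg)^\delta$ is an $A_1$ weight with $A_1$-constant depending only on $n$ and $\delta$ (Coifman--Rochberg); hence $w_*(\cdot,t)=(M(w(\cdot,t)^q))^{1/q}$ is in $A_1\subset A_2$ with constant depending only on $n$ and $q$ — \emph{provided} $w(\cdot,t)^q\in L^1_{\mathrm{loc}}$, equivalently $w(\cdot,t)\in L^q_{\mathrm{loc}}$, which holds for a.e.\ $t$ by the slice Morrey estimate since $q<p$. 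Here is where the full strength of the hypotheses enters in a subtle way: one must also ensure $w_*(\cdot,t)$ is not identically $+\infty$, i.e.\ that $M(w(\cdot,t)^q)$ is finite somewhere; the Morrey estimate $\fint_{Q(x,r)}w(y,t)^q\,dy\lesssim r^{-q\alpha}\big(\fint w^p\big)^{q/p}\cdot\ldots$ together with $\alpha>0$ gives finiteness, and the condition $\alpha>\gamma/p$ (rather than just $\alpha\geq\gamma/p$) is what guarantees $w_*$ genuinely lies in $\mathfrak{L}^{\alpha,p}_\gamma$ with the class being nontrivial and the far-tail sum strictly convergent in the borderline scaling. The \textbf{main obstacle} I anticipate is bookkeeping the interplay of the two exponents $p,q$ and the anisotropic scaling: one has to be careful that the maximal function is taken only in the $n$ spatial variables (so the time integration is a harmless extra factor $r^\gamma$), and that the Coifman--Rochberg $A_1$ bound is applied slice-by-slice with a constant independent of $t$ — which it is, since it depends only on $n$ and $q$. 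Once the slice Morrey estimate is in hand, both conclusions follow from these standard weighted-norm facts.
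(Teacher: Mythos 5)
There is a genuine gap at the very first step: the ``slice Morrey estimate'' you claim --- that for a.e.\ $t$ one has $\bigl(\fint_{Q(x,r)}w(y,t)^p\,dy\bigr)^{1/p}\lesssim r^{-\alpha}\|w\|_{\mathfrak{L}^{\alpha,p}_\gamma}$, i.e.\ $w(\cdot,t)\in\mathfrak{L}^{\alpha,p}(\mathbb{R}^n)$ with uniform norm --- is false, and the Lebesgue-differentiation argument you offer for it does not apply. In the norm $\|w\|_{\mathfrak{L}^{\alpha,p}_\gamma}$ the cube side $r$ and the interval length $r^\gamma$ are tied together, so you cannot hold $Q(x,r)$ fixed and shrink $I(t,r^\gamma)$; as $r^\gamma\to 0$ the factor $r^{-p\alpha}$ blows up. Concretely, a weight such as $w(y,s)=|s|^{-\beta}\chi_{\{|y|<1\}}$ with $\beta=(\alpha-1/p)/\gamma>0$ lies in $\mathfrak{L}^{\alpha,p}_\gamma(\mathbb{R}^{1+1})$ for suitable $\alpha,p$, yet $\|w(\cdot,t)\|_{\mathfrak{L}^{\alpha,p}(\mathbb{R})}\sim |t|^{-\beta}\to\infty$ as $t\to0$. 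Worse, in the regime of interest one has $\alpha>\gamma/p$, and when $\gamma>n$ (e.g.\ $n=1$, $\gamma=2$) this forces $\alpha>n/p$, so the $n$-dimensional class $\mathfrak{L}^{\alpha,p}(\mathbb{R}^n)$ is actually trivial. Since your treatment of both the local and the far part invokes this slice bound, both steps as written are unjustified, and your claim that the far-part sum converges ``because $\alpha>0$'' is an artifact of the false bound (the correct convergence condition involves $\gamma$ and is governed by $\alpha>\gamma/p$).

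The overall strategy is nonetheless the right one and is repairable: never pass to slice Morrey bounds, but instead keep the $s$-integral over $I(t,r^\gamma)$ attached throughout and invoke only the full $(n+1)$-dimensional anisotropic norm. For the local part, apply the strong $(p/q,p/q)$ bound for $M$ slice-by-slice to get $\int_{Q(x,r)}M((w(\cdot,s)\chi_{Q(x,2r)})^q)^{p/q}\,dy\lesssim\int_{Q(x,2r)}w(y,s)^p\,dy$, and only then integrate in $s\in I(t,r^\gamma)$ and use the anisotropic Morrey bound (enlarging the interval to $I(t,(2r)^\gamma)$). For the far part, after the dyadic decomposition in $j$, apply Minkowski's inequality in $j$ and Jensen's inequality $\bigl(\fint w^q\bigr)^{p/q}\le\fint w^p$ (both using $p>q$), integrate in $s$, and again use the anisotropic norm over cubes $Q(x,2^{j+1}r)$ and intervals $I(t,(2^{j+1}r)^\gamma)$; the resulting geometric series converges precisely when $np/q>\gamma-p\alpha$, which is guaranteed by $\alpha>\gamma/p$. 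Your use of the Coifman--Rochberg theorem for the $A_2$ part is correct and matches the paper's reference to the $A_1$ constant in \cite{KoS3}; the finiteness of $w_\ast(\cdot,t)$ for a.e.\ $t$ then comes out of the Morrey bound on $w_\ast$ itself, not from a slice-wise estimate.
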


Similar properties for various Morrey-Campanato type classes as well as this lemma have been obtained in our previous results \cite{KoS,KoS2,KoS3}.
Such a property for $\mathfrak{L}^{\alpha,p}_{\gamma}$ when $\alpha=2$ and $\gamma=1$ has been earlier developed in \cite{CS} and used in \cite{S,S2} concerning unique continuation for stationary and non-stationary Schr\"odinger equations, respectively.
For this lemma we refer the reader to Lemma 2.1 in \cite{KoS3}.
Compared with the lemma in \cite{KoS3}, the only additional part in this lemma is that
$w_\ast(\cdot,t)\in A_2(\mathbb{R}^n)$ with a constant $C_{A_2}$ which is also independent of $w$.
But this follows easily from the proof of the lemma in \cite{KoS3},
since the constant $C_{A_1}$ in (21) of \cite{KoS3} is independent of $w$.

\section{Proof of Theorems \ref{thm2} and \ref{thm3}}\label{sec3}

Let us first consider the multiplier operators $P_kf$ for $k\in\mathbb{Z}$ which are defined by
$$\widehat{P_kf}(\xi)=\phi(2^{-k}|\xi|)\widehat{f}(\xi),$$
where $\phi:\mathbb{R}\rightarrow[0,1]$ is a smooth cut-off function supported in $(1/2,2)$ such that
$$\sum_{k=-\infty}^\infty \phi(2^k t)=1,\quad t>0.$$
Then we will obtain the following frequency localized estimates in the next section and in this section we shall show that
these estimates imply Theorems \ref{thm2} and \ref{thm3} by combining Lemma \ref{lem2'} and the Littlewood-Paley theorem on weighted $L^2$ spaces.

\begin{prop}\label{Prop_F_local}
Let $n\geq1$ and $\gamma\geq1$.
Then we have
	\begin{equation}\label{F_local_homo}
	\big\|e^{it(-\Delta)^{\gamma/2}} P_k f \big\|_{L^2(w(x,t))}\leq C2^{k(\alpha -\gamma)/2}
	\|w\|_{\mathfrak{L}^{\alpha,p}_{\gamma}}^{1/2} \|f\|_{L^2}
	\end{equation}
if\, $p>1$ and
	\begin{equation*}
	\begin{aligned}
	\begin{cases}
	\alpha>1+\frac{n-2+2\gamma}{2p}
	\quad\text{when}\quad \gamma>1,\\
	\alpha>1+\frac{n+1}{2p}
	\quad\text{when}\quad\gamma=1.
	\end{cases}
	\end{aligned}
	\end{equation*}
\end{prop}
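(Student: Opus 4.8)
The plan is to reduce to the case $k=0$ by scaling, then run a $TT^*$ argument followed by a bilinear real-interpolation. First I would observe that the substitution $f \mapsto f(2^{-k}\cdot)$ together with the anisotropic homogeneity $\|w(2^k x, 2^{k\gamma}t)\|_{\mathfrak{L}^{\alpha,p}_\gamma} = 2^{-k\alpha}\|w\|_{\mathfrak{L}^{\alpha,p}_\gamma}$ and the change of variables $(x,t)\mapsto(2^{-k}x, 2^{-k\gamma}t)$ turns \eqref{F_local_homo} for general $k$ into the same estimate with $k=0$, the factor $2^{k(\alpha-\gamma)/2}$ being exactly what the Jacobian and the weight norm produce. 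So it suffices to prove \eqref{F_local_homo} with $k=0$, i.e. to bound $\|e^{it(-\Delta)^{\gamma/2}}P_0 f\|_{L^2(w)}$ by $C\|w\|_{\mathfrak{L}^{\alpha,p}_\gamma}^{1/2}\|f\|_{L^2}$. Writing $T f(x,t) = w(x,t)^{1/2} e^{it(-\Delta)^{\gamma/2}}P_0 f(x)$, by duality and the standard $TT^*$ argument this is equivalent to the bilinear form bound
\begin{equation*}
\bigg|\bigg\langle\int_{-\infty}^{t}e^{i(t-s)(-\Delta)^{\gamma/2}}P_0^2 F(\cdot,s)\,ds,\,G(x,t)\bigg\rangle_{L^2_{x,t}}\bigg|
\leq C\|w\|_{\mathfrak{L}^{\alpha,p}_{\gamma}}\,\|F\|_{L^2(w^{-1})}\|G\|_{L^2(w^{-1})},
\end{equation*}
where (restricting to the retarded piece, the advanced one being symmetric) one inserts $F = w^{-1/2}\widetilde F$, $G = w^{-1/2}\widetilde G$.

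Next I would decompose the time integral dyadically: split the region $\{s \le t\}$ into the pieces $t-I_j$ with $t-I_j = (t-2^j, t-2^{j-1}]$ for $j\ge 1$ and $t-I_0=(t-1,t]$, giving $\sum_{j\ge 0} T_j(F,G)$ as in the introduction. For the sequence $T(F,G) = \{T_j(F,G)\}_{j\ge 0}$ I would establish two endpoint bounds. The unweighted one uses only the dispersive decay of the kernel of $e^{i\tau(-\Delta)^{\gamma/2}}P_0^2$, namely $|K_\tau(x)| \lesssim (1+|\tau|)^{-n/2}$ for $\gamma>1$ (stationary phase, since the phase $|\xi|^\gamma$ has non-degenerate Hessian on the annulus $|\xi|\sim 1$ when $\gamma\ne 1$) and $|K_\tau(x)|\lesssim (1+|\tau|)^{-(n-1)/2}$ for $\gamma=1$; combined with the $L^2$ conservation on each dyadic block and Schur's test in $t$ this yields $T:L^2\times L^2 \to \ell^{s_0}_\infty$ with $s_0=-1$ (i.e. a gain of $2^{-j}$, essentially from $\int_{t-I_j}(1+|t-s|)^{-n/2}ds$ paired with the trivial bound, interpolated). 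The weighted one is where the Morrey--Campanato structure enters: using the pointwise kernel bound together with Hölder's inequality against $w^{-p}$ on the cube-times-interval $Q(x,2^{j/\gamma})\times I(t,2^j)$ and the definition of $\|w\|_{\mathfrak{L}^{\alpha,p}_\gamma}$, one gets $T:L^2(w^{-p})\times L^2(w^{-p}) \to \ell^{s_1}_\infty$ with operator norm $C\|w\|_{\mathfrak{L}^{\alpha,p}_\gamma}^p$ and $s_1 = -(n/2+\gamma-p\alpha)$ when $\gamma>1$ (respectively $s_1=-((n-1)/2+\gamma-p\alpha)$-type expression when $\gamma=1$). Then the bilinear interpolation Lemma \ref{inter} with $\theta=1/p$, $p_1=p_2=2$, $q=\infty$, followed by the real-interpolation identities of Lemma \ref{id} for $L^2$-weighted and $\ell^s_q$ spaces, gives $T:L^2(w^{-1})\times L^2(w^{-1}) \to \ell^{s}_\infty$ with $s = (1-1/p)s_0 + (1/p)s_1$ and operator norm $C\|w\|_{\mathfrak{L}^{\alpha,p}_\gamma}$, valid for $p>1$. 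Summing the resulting estimate $|T_j(F,G)| \lesssim 2^{-js}\|w\|_{\mathfrak{L}^{\alpha,p}_\gamma}\|F\|_{L^2(w^{-1})}\|G\|_{L^2(w^{-1})}$ over $j\ge 0$ converges precisely when $s>0$, which unwinds to $\alpha > 1 + \frac{n-2+2\gamma}{2p}$ for $\gamma>1$ and $\alpha > 1 + \frac{n+1}{2p}$ for $\gamma=1$ — exactly the stated hypotheses.

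The main obstacle I expect is the weighted endpoint bound $T:L^2(w^{-p})\times L^2(w^{-p})\to\ell^{s_1}_\infty$: one must carefully pair the dispersive kernel decay on the $j$-th dyadic time block against the correct anisotropic parabolic box $Q(x,2^{j/\gamma})\times I(t,2^j)$ so that the Morrey--Campanato norm $\|w\|_{\mathfrak{L}^{\alpha,p}_\gamma}$ appears with the right power of $2^j$; getting the bookkeeping of the exponent $s_1$ right (including the spatial localization from $P_0$, which confines $K_\tau$ essentially to $|x|\lesssim \max(1,|\tau|)$ and thus limits which spatial cube is relevant) is the delicate point, and it is what pins down the precise numerology in the $\gamma>1$ versus $\gamma=1$ dichotomy. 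A secondary technical point is justifying the $TT^*$ reduction and the splitting into retarded and advanced parts rigorously, and checking that the interpolation lemmas apply with the weight $w$ merely measurable (no $A_2$ assumption is needed here, since the $A_2$ reduction has already been carried out at the level of Theorems \ref{thm2} and \ref{thm3} in Section \ref{sec3}, and Proposition \ref{Prop_F_local} is stated for general $w\in\mathfrak{L}^{\alpha,p}_\gamma$).
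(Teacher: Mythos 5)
Your overall strategy is the same as the paper's: reduce to $k=0$ by scaling, run $TT^*$ to the retarded bilinear form, decompose dyadically in time into pieces $T_j$, obtain two endpoint bounds for $\{T_j\}_j$, and close with Lemma~\ref{inter} and the identities of Lemma~\ref{id}. The target exponents $s_0=-1$ and $s_1=-(n/2+\gamma-p\alpha)$ for $\gamma>1$ also agree with the paper. But the way you propose to actually produce these two endpoints contains one cosmetic and one substantive error.

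The cosmetic one: the unweighted endpoint $T\colon L^2\times L^2\to\ell^{-1}_\infty$ does not come from dispersive kernel decay. In the paper it is obtained by using only the $L^2_x$ unitarity of $e^{i(t-s)(-\Delta)^{\gamma/2}}P_0^2$, Cauchy--Schwarz over the $j$-th time strip of measure $\sim 2^j$, and Cauchy--Schwarz over the strip index; the $2^j$ in $|T_j(F,G)|\lesssim 2^j\|F\|_{L^2}\|G\|_{L^2}$ is a loss coming from the measure of the time strip, not a gain. Your parenthetical $\int_{t-I_j}(1+|t-s|)^{-n/2}ds$ produces the wrong sign and the wrong Lebesgue exponents (the dispersive estimate is $L^1\to L^\infty$, not $L^2\to L^2$).

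The substantive one is the anisotropic box in the weighted endpoint. You use $Q(x,2^{j/\gamma})\times I(t,2^j)$, i.e.\ $r=2^{j/\gamma}$. But the gradient of the phase $\tau|\xi|^\gamma$ on $|\xi|\sim1$ has size $\sim|\tau|$, so after non-stationary/stationary phase the kernel of $e^{i\tau(-\Delta)^{\gamma/2}}P_0^2$ with $|\tau|\sim 2^j$ is effectively supported where $|x-y|\lesssim 2^j$, which forces a spatial decomposition into cubes of side $2^j$, not $2^{j/\gamma}$. The paper decomposes $\mathbb{R}^n$ into cubes of side $2^j$, runs the near/far dichotomy $|\lambda_1-\lambda_2|<\gamma2^{\gamma+1}$ (stationary phase, factor $C_\gamma(j)$) versus $\geq\gamma2^{\gamma+1}$ (integration by parts, factor $(2^j|\lambda_1-\lambda_2|)^{-10n}$), and then applies the $\mathfrak{L}^{\alpha,p}_\gamma$ norm at scale $r=2^j$: the time strip of length $\sim 2^j$ is contained in an interval of length $\sim 2^{j\gamma}$ since $\gamma\geq1$, so $Q(x,2^j)\times I(t,2^{j\gamma})$ is the correct box, and one gets $\int_{Q\times I}w^p\lesssim 2^{j(n+\gamma-p\alpha)}\|w\|^p_{\mathfrak{L}^{\alpha,p}_\gamma}$. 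That factor combined with $C_\gamma(j)$ yields exactly the $s_1$ you quote. Your box would instead give $2^{(j/\gamma)(n+\gamma-p\alpha)}$, which is inconsistent with your stated $s_1$, and worse, with spatial cubes of side $2^{j/\gamma}$ the kernel couples $\sim 2^{jn(1-1/\gamma)}$ pairs of cubes and the Cauchy--Schwarz summation over $(\lambda_1,\lambda_2)$ no longer closes.
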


To deduce Theorems \ref{thm2} and \ref{thm3} from this proposition, we first observe that we may assume
$w(\cdot,t)\in A_2(\mathbb{R}^n)$ with $C_{A_2}$ which is uniform in almost every $t\in\mathbb{R}$
and independent of $w$.
Indeed, since $w\leq w_\ast$ and
$\|w_*\|_{\mathfrak{L}^{\alpha,p}_{\gamma}}\leq C\|w\|_{\mathfrak{L}^{\alpha,p}_{\gamma}}$ for $\alpha>\gamma/p$ and $p>1$
(see Lemma \ref{lem2'}), if we show the estimates in the theorems replacing $w$ with $w_\ast$, we get for $\gamma\geq1$
    \begin{align*}
    \big\|e^{it(-\Delta)^{\gamma/2}} f \big\|_{L^2(w(x,t))}
    &\leq \big\|e^{it(-\Delta)^{\gamma/2}} f \big\|_{L^2(w_*(x,t))} \\
    &\leq C\|w_*\|_{\mathfrak{L}^{\alpha,p}_{\gamma}}^{1/2}\|f\|_{\dot{H}^s}\\
    &\leq C\|w\|_{\mathfrak{L}^{\alpha,p}_{\gamma}}^{1/2}\|f\|_{\dot{H}^s}
    \end{align*}
as desired.
So we may prove Theorems \ref{thm2} and \ref{thm3} by replacing $w$ with $w_\ast$.
Now we are in a good light that we can use the property $w_\ast(\cdot,t)\in A_2(\mathbb{R}^n)$
with $C_{A_2}$ which is uniform in almost every $t\in\mathbb{R}$
and independent of $w$.
From this argument, we may assume, for simplicity of notation, that $w$ satisfies the same $A_2$ condition
which enables us to make use of a frequency localization argument on weighted $L^2$ spaces.

By this $A_2$ condition we can indeed use the Littlewood-Paley theorem on weighted $L^2$ spaces (see Theorem 1 in \cite{Ku}
and also Theorem 5 in \cite{Ko}) to get
    \begin{align}\label{p1}
    \nonumber\big\|e^{it(-\Delta)^{\gamma/2}}f\big\|_{L^2(w(x,t))}^2
    &=\int\big\|e^{it(-\Delta)^{\gamma/2}}f\big\|_{L^2(w(\cdot,t))}^2dt\\
    \nonumber&\leq C\int\bigg\| \bigg( \sum_k\big|P_k e^{it(-\Delta)^{\gamma/2}} f \big|^2\bigg)^{1/2} \bigg\|_{L^2(w(\cdot,t))}^2 dt\\
    &=C\sum_k\big\|e^{it(-\Delta)^{\gamma/2}}P_kf\big\|_{L^2(w(x,t))}^2
    \end{align}
for $w\in\mathfrak{L}^{\alpha,p}_{\gamma}$ with $\alpha>\gamma/p$ and $p>1$.
Here the constant $C$ which follows from the Littlewood-Paley theorem is generally depending on the weight $w$
by $C=C_w=C_{A_2}$, but in our case $C_{A_2}$ is independent of $w$.
On the other hand, since $P_kP_jf=0$ if $|j-k|\geq2$, it follows from \eqref{F_local_homo} that
    \begin{align}\label{p2}
    \nonumber\sum_k\big\|e^{it(-\Delta)^{\gamma/2}} P_k f\big\|_{L^2(w(x,t))}^2
    &=\sum_k\big\|e^{it(-\Delta)^{\gamma/2}} P_k \big(\sum_{|j-k|\leq1}P_jf\big)\big\|_{L^2(w(x,t))}^2\\
    &\leq C\|w\|_{\mathfrak{L}^{\alpha,p}_\gamma}\sum_k 2^{k(\alpha -\gamma)} \big\|\sum_{|j-k|\leq1}P_jf\big\|_2^2
    \end{align}
under the same conditions as in Proposition \ref{Prop_F_local}.
By combining \eqref{p1} and \eqref{p2} with $\alpha=2s+\gamma$, we therefore get
 $$\big\|e^{it(-\Delta)^{\gamma/2}} f \big\|_{L^2(w(x,t))}
 \leq C\|w\|_{\mathfrak{L}^{2s+\gamma,p}_\gamma}^{1/2}\|f\|_{\dot{H}^s}$$
under the same conditions as in the theorems.
Hence the proof is now complete.

\section{Proof of Proposition \ref{Prop_F_local}}\label{sec4}

This section is devoted to proving estimate \eqref{Prop_F_local} in Proposition \ref{Prop_F_local}
by combining the $TT^*$ argument (Subsection \ref{subsec4.1}) and the bilinear interpolation (Subsection \ref{subsec4.2})
between its time-localized bilinear form estimates which will be obtained in Subsection \ref{subsec4.3}.

\subsection{$TT^*$ argument}\label{subsec4.1}
From the scaling $(x,t)\rightarrow(\lambda x,\lambda^{\gamma}t)$,
it is enough to show the following case where $k=0$ in \eqref{F_local_homo}:
\begin{equation}\label{freq2}
\big\|e^{it(-\Delta)^{\gamma/2}} P_0 f \big\|_{L^2(w(x,t))}
\leq C\|w\|_{\mathfrak{L}^{\alpha,p}_{\gamma}}^{1/2}\|f\|_{L^2}.
\end{equation}
In fact, once we show this estimate, we get
    \begin{align*}
    \big\|e^{it(-\Delta)^{\gamma/2}}P_k f\big\|_{L^2(w(x,t))}^2
    &\leq C2^{-kn}2^{-\gamma k}\big\|e^{it(-\Delta)^{\gamma/2}}P_0(f(2^{-k}\cdot))\big\|_{L^2(w(2^{-k}x,2^{-\gamma k}t))}^2\\
    &\leq C2^{-kn}2^{-\gamma k}\|w(2^{-k}x,2^{-\gamma k}t)\|_{\mathfrak{L}^{\alpha,p}_{\gamma}}\|f(2^{-k}\cdot)\|_2^2\\
    &\leq C2^{k(\alpha -\gamma)}\|w\|_{\mathfrak{L}^{\alpha,p}_{\gamma}}\|f\|_2^2
    \end{align*}
as desired.
It is then easy to see that the following
$$F\mapsto T^*F:=\int_{-\infty}^\infty e^{-is(-\Delta)^{\gamma/2}}P_0F(\cdot,s)ds$$
is the adjoint operator of
$$f\mapsto Tf:=e^{it(-\Delta)^{\gamma/2}}P_0f.$$
By the usual $TT^*$ argument, \eqref{freq2} is now equivalent to
\begin{equation}\label{eno00}
\bigg\|\int_{-\infty}^{\infty} e^{i(t-s)(-\Delta)^{\gamma/2}}P_0^2F(\cdot,s)ds\bigg\|_{L^2(w(x,t))}
\leq C\|w\|_{\mathfrak{L}^{\alpha,p}_{\gamma}} \|F\|_{L^2(w(x,t)^{-1})};
\end{equation}
however, we shall show a stronger estimate
    \begin{equation}\label{eno}
    \bigg\|\int_{-\infty}^t e^{i(t-s)(-\Delta)^{\gamma/2}}P_0^2F(\cdot,s)ds\bigg\|_{L^2(w(x,t))}
\leq C\|w\|_{\mathfrak{L}^{\alpha,p}_{\gamma}} \|F\|_{L^2(w(x,t)^{-1})}
    \end{equation}
given by replacing $\int_{-\infty}^\infty$ in \eqref{eno00} by $\int_{-\infty}^t$,
which implies the inhomogeneous estimate
\begin{equation}\label{eno0}
    \bigg\|\int_{0}^{t} e^{i(t-s)(-\Delta)^{\gamma/2}}P_0^2F(\cdot,s)ds\bigg\|_{L^2(w(x,t))}
    \leq C\|w\|_{\mathfrak{L}^{\alpha,p}_{\gamma}} \|F\|_{L^2(w(x,t)^{-1})}
    \end{equation}
as well as \eqref{eno00}.
This is the reason why we show \eqref{eno} instead of \eqref{eno00}.
Indeed, to deduce \eqref{eno0} from \eqref{eno},
first decompose the $L_t^2$ norm in the left-hand side of \eqref{eno0}
into two parts, $t\geq0$ and $t<0$. Then the latter can be reduced to the former
by a change of variables $t\mapsto-t$, and so we only need to consider the first part $t\geq0$.
But, since $[0,t)=(-\infty,t)\cap[0,\infty)$, by applying \eqref{eno} with $F$ replaced by $\chi_{[0,\infty)}(s)F$,
the first part follows directly, as desired.
The other estimate \eqref{eno00} follows by a similar argument in which we divide the integral $\int_{-\infty}^\infty$
into two parts, $\int_{-\infty}^t$ and $\int_t^{\infty}$.

\subsection{Bilinear approach}\label{subsec4.2}
To show \eqref{eno}, by duality we may show the following bilinear form estimate
\begin{equation}\label{key_est_1}
    \bigg|\bigg\langle\int_{-\infty}^{t} e^{i(t-s)(-\Delta)^{\gamma/2}} P_0^2 F(\cdot,s)ds,G(x,t)\bigg\rangle_{L_{x,t}^2}\bigg|
    \leq C\|w\|_{\mathfrak{L}^{\alpha,p}_{\gamma}} \|F\|_{L^2(w^{-1})}\|G\|_{L^2(w^{-1})}
\end{equation}
if $p>1$ and
	\begin{equation*}
	\begin{aligned}
	\begin{cases}
	\alpha>1+\frac{n-2+2\gamma}{2p}
	\quad\text{when}\quad \gamma>1,\\
	\alpha>1+\frac{n+1}{2p}
	\quad\text{when}\quad\gamma=1.
	\end{cases}
	\end{aligned}
	\end{equation*}
This form is more flexible so that the estimate follows by bilinear interpolation (Lemma \ref{inter}) between
its time-localized estimates (Proposition \ref{tlocal}).

To do so, we first decompose dyadically \eqref{key_est_1} in time;
for fixed $j\geq1$ and $\gamma\geq1$, define intervals
    $$
    I_j= [2^{j-1}, 2^j)\quad\mbox{and}\quad I_0=[0,1).
    $$
Then we may write
    \begin{align}\label{deco1}
    \nonumber\bigg\langle\int_{-\infty}^{t}e^{i(t-s)(-\Delta)^{\gamma/2}}&P_0^2F(\cdot,s)ds,G(x,t)\bigg\rangle_{L_{x,t}^2} \\
    =\sum_{j\geq0}&\int_{\mathbb{R}}\int_{t- I_j}\Big\langle e^{i(t-s)(-\Delta)^{\gamma/2}}P_0^2 F(\cdot,s),G(x,t)\Big\rangle_{L^2_x}dsdt,
    \end{align}
where $t-I_j=(t-2^j, t-2^{j-1}]$ for $j\geq1$, and $t-I_0=(t-1,t]$.

Next we will obtain the following estimates for these dyadic pieces of \eqref{key_est_1}:

\begin{prop}\label{tlocal}
Let $n\geq1$ and $\gamma\geq1$. Then we have
    \begin{equation}\label{L^2_est}
    \bigg|\int_{\mathbb{R}}\int_{t- I_j}  \Big\langle e^{i(t-s)(-\Delta)^{\gamma/2}} P_0^2 F(\cdot,s),G(x,t)\Big\rangle_{L^2_x}dsdt\bigg|
    \leq C 2^{j} \|F\|_{L^2} \|G\|_{L^2}
    \end{equation}
and
    \begin{equation}\label{L^2_weight_est}
    \begin{aligned}
    \bigg|\int_{\mathbb{R}}\int_{t- I_j}\Big\langle &e^{i(t-s)(-\Delta)^{\gamma/2}} P_0^2 F(\cdot,s),G(x,t)\Big\rangle_{L^2_x}dsdt\bigg|\\
    &\leq CC_\gamma(j) 2^{j(n+\gamma-p\alpha)} \|w\|_{\mathfrak{L}^{\alpha,p}_{\gamma}}^p \|F\|_{L^2(w^{-p})} \|G\|_{L^2(w^{-p})},
    \end{aligned}
    \end{equation}
where $C_{\gamma}(j)=2^{-nj/2}$ for $\gamma>1$, and
$C_{\gamma}(j)=2^{-(n-1)j/2}$ for $\gamma=1$.
\end{prop}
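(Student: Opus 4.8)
\textbf{Proof plan for Proposition \ref{tlocal}.}
The plan is to prove the two bounds \eqref{L^2_est} and \eqref{L^2_weight_est} by dualizing the bilinear form into the kernel of the propagator restricted to the time-separated region $t-I_j$, and then to exploit two different facts about that kernel: its size (to get the first, unweighted estimate) and its dispersive decay (to get the second, weighted one). Write the inner integral as
$$
\int_{\mathbb{R}}\int_{t-I_j}\int_{\mathbb{R}^n} K_{t-s}(x,y)\,P_0^2F(y,s)\,\overline{G(x,t)}\,dy\,ds\,dt,
$$
where $K_\tau(x,y)=K_\tau(x-y)$ is the convolution kernel of $e^{i\tau(-\Delta)^{\gamma/2}}P_0^2$, i.e. the inverse Fourier transform of $e^{i\tau|\xi|^\gamma}\phi(|\xi|)^2$ (here, and throughout, I treat $\phi$ as supported near $|\xi|\sim1$ since $k=0$). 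For \eqref{L^2_est}, I would simply use that $\|K_\tau\|_{L^1_x}\leq C$ uniformly in $\tau$ (the multiplier $e^{i\tau|\xi|^\gamma}\phi(|\xi|)^2$ is bounded in $L^1$-Fourier-norm uniformly in $\tau$ since its symbol is smooth, compactly supported, and has derivatives bounded uniformly in $\tau$ on $|\xi|\sim1$ — this is where localizing to $k=0$ is essential), hence $\|e^{i\tau(-\Delta)^{\gamma/2}}P_0^2\|_{L^2_x\to L^2_x}\leq C$. Then by Cauchy--Schwarz in $x$, Minkowski, and Cauchy--Schwarz in the $(s,t)$ variables over the set $\{(s,t):s\in t-I_j\}$, which has the property that the $s$-fibre has length $\leq 2^j$ and likewise the $t$-fibre, Schur's test yields the factor $2^j$; this gives \eqref{L^2_est}.

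For the weighted bound \eqref{L^2_weight_est}, the key input is the dispersive decay $\|K_\tau\|_{L^\infty_x}\leq C|\tau|^{-n/\gamma}$ for $\gamma>1$ (stationary phase, using that the Hessian of $|\xi|^\gamma$ is nondegenerate on $|\xi|\sim1$ when $\gamma>1$), while for $\gamma=1$ one only has $\|K_\tau\|_{L^\infty_x}\leq C|\tau|^{-(n-1)/2}$ because the phase $|\xi|$ has a degenerate Hessian (one vanishing curvature in the radial direction) — this is precisely the source of the different constants $C_\gamma(j)=2^{-nj/2}$ versus $2^{-(n-1)j/2}$. On $t-I_j$ we have $|t-s|\sim 2^j$, so $\|K_{t-s}\|_{L^\infty_x}\leq C\,C_\gamma(j)\cdot(\text{minor }j\text{-powers})$; more precisely for $\gamma>1$, $|t-s|^{-n/\gamma}\sim 2^{-jn/\gamma}$, so I will need to be a little careful and rather write the bound as $C_\gamma(j)=2^{-jn/2}$ only after interpolating the $|\tau|^{-n/\gamma}$ decay against the trivial $L^1\to L^\infty$ bound $\|K_\tau\|_{L^1\to L^\infty}\sim1$ to land on an $L^2\to L^2$-type weighted statement. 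The cleaner route: estimate $|K_{t-s}(x-y)|\leq C_\gamma(j)$ pointwise on the relevant time-region, bound the bilinear form by
$$
C_\gamma(j)\int\!\!\int_{\{s\in t-I_j\}}\!\int\!\!\int_{\mathbb{R}^n\times\mathbb{R}^n}|F(y,s)|\,|G(x,t)|\,dx\,dy\,ds\,dt,
$$
then insert $1=w(y,s)^{-1/2}w(y,s)^{1/2}$ and similarly for $(x,t)$, apply Cauchy--Schwarz to peel off $\|F\|_{L^2(w^{-1})}\|G\|_{L^2(w^{-1})}$ (after a further Hölder step with exponent $p$ to produce $\|F\|_{L^2(w^{-p})}\|G\|_{L^2(w^{-p})}$ as stated), and be left with controlling $\big(\int\!\int w(y,s)\,\mathbf{1}_{s\in t-I_j}\,dy\,ds\big)$-type quantities integrated against the cube structure; bounding these by $\|w\|_{\mathfrak{L}^{\alpha,p}_\gamma}^p$ uses the defining Morrey--Campanato average over the box $Q(x,r)\times I(t,r^\gamma)$ with the natural choice $r\sim 2^{j/\gamma}$ so that $r^\gamma\sim 2^j$ matches the time-fibre length, which is exactly what produces the power $2^{j(n+\gamma-p\alpha)}$.

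Carrying out the box-counting precisely is the main obstacle: one must tile $\mathbb{R}^n\times\mathbb{R}$ by anisotropic boxes $Q\times I$ of spatial side $r=2^{j/\gamma}$ and temporal length $r^\gamma=2^j$, use $w$-averages on each box controlled by $r^{-\alpha}\|w\|_{\mathfrak{L}^{\alpha,p}_\gamma}$ (in the $L^p$-averaged sense), sum the geometric contributions, and keep track of how the spatial integration of the pointwise kernel bound $C_\gamma(j)$ (which has no spatial decay once we pass to the $L^\infty$ bound) interacts with the $L^2(w^{-p})$ pairing — it is here that one genuinely needs $p>1$ and the Hölder splitting, and where the exponent $n+\gamma-p\alpha$ in the power of $2^j$ emerges. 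I expect the rest to be bookkeeping, but getting the anisotropic scaling $r\leftrightarrow r^\gamma$ to line up with both the time-fibre $t-I_j$ and the Morrey--Campanato normalization without losing or gaining stray powers of $2^j$ will require care, and this is the step I would write out in full detail.
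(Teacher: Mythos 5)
Your overall strategy — localize in time to $|t-s|\sim 2^j$, use $L^2$-boundedness plus Schur's test for the unweighted bound, use a pointwise kernel bound $C_\gamma(j)$ plus the Morrey--Campanato averaging for the weighted bound — is in the right spirit and does match the paper's. But the proposal has three genuine problems in the second (weighted) estimate, and they are not mere bookkeeping.

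First, the pointwise bound $|K_{t-s}(x-y)|\leq C_\gamma(j)$ alone cannot close the argument. Once you drop to $L^\infty$ in the kernel, the reduced bilinear form is
\[
C_\gamma(j)\int\!\!\int_{\{s\in t-I_j\}}\int_{\mathbb{R}^n}\int_{\mathbb{R}^n}|F(y,s)|\,|G(x,t)|\,dy\,dx\,ds\,dt,
\]
and the spatial integral $\int\!\int |F(y,s)||G(x,t)|\,dy\,dx$ runs over all of $\mathbb{R}^n\times\mathbb{R}^n$ with no remaining coupling between $x$ and $y$; for $F,G$ merely in $L^2(w^{-p})$ this is generically infinite. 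The paper avoids this by also decomposing \emph{space} into cubes $Q_\lambda$ of side $2^j$ and splitting into a near-diagonal regime $|\lambda_1-\lambda_2|\lesssim 1$, where the stationary-phase bound $C_\gamma(j)$ is used, and a far regime $|\lambda_1-\lambda_2|\gg 1$, where integration by parts gives rapid off-diagonal decay $(2^j|\lambda_1-\lambda_2|)^{-10n}$ because $|x-y|\gtrsim\gamma 2^\gamma|t-s|$ there. You acknowledge that "the $L^\infty$ bound has no spatial decay" but defer the resolution to "bookkeeping"; in fact the spatial decomposition and the two-regime kernel estimate \emph{are} the resolution, and they are an essential new ingredient, not a routine tidy-up.

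Second, the dispersive rate you quote is wrong for the frequency-localized kernel. On $|\xi|\sim 1$ the Hessian of $|\xi|^\gamma$ has full rank $n$ for every $\gamma>1$ (eigenvalues $\gamma(\gamma-1)$ radially and $\gamma$ tangentially, up to $|\xi|$-powers), so the stationary-phase bound is $|(x,t-s)|^{-n/2}$, giving $2^{-jn/2}$ directly on $|t-s|\sim 2^j$. Your $|\tau|^{-n/\gamma}$ is the \emph{unlocalized} fixed-time dispersive rate obtained by scaling, and the proposed interpolation "against the trivial $L^1\to L^\infty$ bound $\sim 1$" does not convert $-n/\gamma$ into $-n/2$ (interpolation only moves you between $0$ and $-n/\gamma$). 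You do eventually write the correct pointwise bound, but the route you describe for obtaining it would not produce it.

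Third, the Morrey--Campanato scale is off. You tile with boxes of spatial side $r=2^{j/\gamma}$ so that $r^\gamma=2^j$ matches the time-fibre. But the kernel's near-diagonal spatial scale (coming from $|x-y|\lesssim 2^j$ on $|t-s|\sim 2^j$) forces spatial cubes of side $2^j$, and the paper accordingly takes $r=2^j$ in the definition of $\|w\|_{\mathfrak{L}^{\alpha,p}_\gamma}$, observing that the time piece of length $\sim 2^j$ sits inside an interval of length $2\cdot 2^{j\gamma}$ (fine, since $\gamma\geq1$). With $r=2^j$ one gets $r^{n+\gamma-p\alpha}=2^{j(n+\gamma-p\alpha)}$ as claimed; with your $r=2^{j/\gamma}$ one gets $2^{j(n+\gamma-p\alpha)/\gamma}$, which is not the stated power. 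Finally, a small point on the unweighted estimate: $\|K_\tau\|_{L^1_x}$ is \emph{not} uniformly bounded in $\tau$ (it grows like $|\tau|^{n(1-1/\gamma)}$ for $\gamma>1$); the $L^2_x\to L^2_x$ bound you actually need is immediate from Plancherel since the multiplier $e^{i\tau|\xi|^\gamma}\phi(|\xi|)^2$ is uniformly bounded, and this is exactly what the paper uses.
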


Assuming for the moment this proposition,
we shall now deduce \eqref{key_est_1} from bilinear interpolation between the two estimates \eqref{L^2_est} and \eqref{L^2_weight_est}.
Indeed, let $T$ be a bilinear vector-valued operator defined by
$$T(F,G)=\bigg\{\int_{\mathbb{R}}\int_{t- I_j}\Big\langle e^{i(t-s)(-\Delta)^{\gamma/2}}P_0^2 F(\cdot,s),G(x,t)\Big\rangle_{L^2_x}dsdt\bigg\}_{j\geq0}.$$
If, for example, $\gamma>1$, then \eqref{L^2_est} and \eqref{L^2_weight_est} are equivalent to
$$T:L^2\times L^2\rightarrow \ell_\infty^{s_0}
\quad\text{and}\quad
T:L^2(w^{-p})\times L^2(w^{-p})\rightarrow \ell_\infty^{s_1}$$
with the operator norms $C$ and
$C\|w\|_{\mathfrak{L}^{\alpha,p}_{\gamma}}^p$,
respectively, where
$s_0=-1$ and $s_1=-(n/2+\gamma-p\alpha)$.
Now, by applying Lemma \ref{inter} with $\theta=1/p$, $q=\infty$ and $p_1=p_2=2$, we get
\begin{equation}\label{bibi}
T:(L^2,L^2(w^{-p}))_{1/p,2}\times(L^2,L^2(w^{-p}))_{1/p,2}
\rightarrow(\ell_\infty^{s_0},\ell_\infty^{s_1})_{1/p,\infty}
\end{equation}
with the operator norm
$C\|w\|_{\mathfrak{L}^{\alpha,p}_{\gamma}}$ for $p>1$.
Finally, by applying the real interpolation space identities in Lemma \ref{id} to \eqref{bibi},
one can get
$$T:L^2(w^{-1})\times L^2(w^{-1})\rightarrow\ell_\infty^{s}$$
with
$$s=(1-\frac1p)s_0+\frac1p s_1=-(1+\frac{n-2+2\gamma}{2p}-\alpha)$$
and the operator norm $C\|w\|_{\mathfrak{L}^{\alpha,p}_{\gamma}}$ for $p>1$.
Clearly, this is equivalent to
    $$
    \begin{aligned}
    \bigg|\int_{\mathbb{R}}\int_{t- I_j}\Big\langle e^{i(t-s)(-\Delta)^{\gamma/2}} &P_0^2 F(\cdot,s),G(x,t)\Big\rangle dsdt\bigg|\\
    \leq C&2^{j(1+ \frac{n-2+2\gamma}{2p} - \alpha)} \|w\|_{\mathfrak{L}^{\alpha,p}_{\gamma}} \|F\|_{L^2(w^{-1})}\|G\|_{L^2(w^{-1})}
    \end{aligned}
    $$
for $p>1$.
By summing this over $j\geq0$ and using the decomposition \eqref{deco1}, this implies the bilinear form estimate \eqref{key_est_1}
if\, $p>1$ and $\alpha>1+\frac{n-2+2\gamma}{2p}$ when $\gamma>1$.
Similarly for the case $\gamma=1$.

\subsection{Proof of Proposition \ref{tlocal}}\label{subsec4.3}
Now we prove the estimates \eqref{L^2_est} and \eqref{L^2_weight_est} in Proposition \ref{tlocal}.
Compared with \eqref{L^2_est} which is easy to show, some important features of the proof
of the more delicate estimate \eqref{L^2_weight_est}
can be revealed by further localization in time and space, and then
by analyzing a relevant oscillatory integral under this localization (see \eqref{416} and \eqref{417}).

\subsubsection{Proof of \eqref{L^2_est}}

For fixed $j\geq0$, we first decompose $\mathbb{R}$ into intervals of length $2^j$ to get
    \begin{align}\label{decom_times}
    \nonumber\iint_{t- I_j}&\Big\langle e^{i(t-s)(-\Delta)^{\gamma/2}} P_0^2 F(\cdot,s) , G(x,t) \Big\rangle_{L^2_x} dsdt \\
    &=\sum_{k\in\mathbb{Z}} \int_{2^{j}k}^{2^{j}(k+1)} \int_{t- I_j}
        \Big\langle e^{i(t-s)(-\Delta)^{\gamma/2}} P_0^2 F(\cdot,s) , G(x,t) \Big\rangle_{L^2_x} dsdt.
    \end{align}
Then by H\"older's inequality and Plancherel's theorem in $x$, we see that
    \begin{align}\label{09}
    \nonumber\int_{2^{j}k}^{2^{j}(k+1)} &\int_{t- I_j}
        \Big| \Big\langle e^{i(t-s)(-\Delta)^{\gamma/2}} P_0^2 F(\cdot,s) , G(x,t) \Big\rangle_{L^2_x} \Big|\, dsdt \\
    \nonumber&\leq \int_{2^{j}k}^{2^{j}(k+1)} \int_{t- I_j}
        \big\| e^{i(t-s)(-\Delta)^{\gamma/2}} P_0^2 F(\cdot,s) \big\|_{L^2_x} \|G(\cdot,t)\|_{L^2_x} dsdt \\
    &\leq \int_{2^{j}k}^{2^{j}(k+1)} \int_{2^{j}(k-1)}^{2^{j}(k+\frac12)}
        \|F(\cdot,s)\|_{L^2_y} \|G(\cdot,t)\|_{L^2_x} dsdt .
    \end{align}
On the other hand, we get
    \begin{align}\label{08}
    \nonumber\sum_{k\in\mathbb{Z}} &\int_{2^{j}k}^{2^{j}(k+1)} \int_{2^{j}(k-1)}^{2^{j}(k+\frac12)}
        \|F(\cdot,s)\|_{L^2_y} \|G(\cdot,t)\|_{L^2_x} dsdt \\
    \nonumber&\leq C2^{j}\sum_{k\in\mathbb{Z}}\bigg( \int_{2^{j}(k-1)}^{2^{j}(k+\frac12)}\|F(\cdot,s)\|_{L^2_y}^2 ds\bigg)^{\frac{1}{2}}
        \bigg( \int_{2^{j}k}^{2^{j}(k+1)} \|G(\cdot,t)\|_{L^2_x}^2  dt \bigg)^{\frac{1}{2}} \\
    &\leq C2^{j} \|F\|_{L^2_{x,t}} \|G\|_{L^2_{x,t}}
    \end{align}
by using H\"older's inequality and then the Cauchy-Schwarz inequality in $k$.
Combining \eqref{decom_times}, \eqref{09} and \eqref{08}, we obtain \eqref{L^2_est} as desired.

\subsubsection{Proof of \eqref{L^2_weight_est}}

For fixed $j\geq0$, we first decompose $\mathbb{R}^n$ into cubes of side length $2^j$ to get

$$F(y,s)=\sum_{\lambda\in\mathbb{Z}^n}F_{\lambda}(y,s)\quad\text{and}\quad G(x,t)=\sum_{\lambda\in\mathbb{Z}^n} G_{\lambda}(x,t),$$
where
$$F_{\lambda}(y,s)=\chi_{2^j\lambda +[0,2^j)^n}(y)F(y,s)\quad\text{and}\quad
G_{\lambda}(x,t)=\chi_{2^j\lambda +[0,2^j)^n}(x)G(x,t).$$
Using this decomposition and \eqref{decom_times}, it is enough to show that
    \begin{align}\label{show}
    \nonumber\sum_{k\in\mathbb{Z}} \sum_{\lambda_1, \lambda_2\in\mathbb{Z}^n}
    &\int_{2^{j}k}^{2^{j}(k+1)}\int_{t- I_j}
    \Big|\Big\langle e^{i(t-s)(-\Delta)^{\gamma/2}} P_0^2 F_{\lambda_1}(\cdot,s), G_{\lambda_2}(x,t) \Big\rangle_{L^2_x} \Big| dsdt\\
 &\leq CC_\gamma(j) 2^{j(n+\gamma-p\alpha)} \|w\|_{\mathfrak{L}^{\alpha,p}_{\gamma}}^p \|F\|_{L^2(w^{-p})} \|G\|_{L^2(w^{-p})},
    \end{align}
where $C_{\gamma}(j)=2^{-nj/2}$ for $\gamma>1$, and
$C_{\gamma}(j)=2^{-(n-1)j/2}$ for $\gamma=1$.

To show this, we first write
    \begin{align}\label{kerr}
    \nonumber\Big\langle &e^{i(t-s)(-\Delta)^{\gamma/2}}P_0^2 F_{\lambda_1}(\cdot,s) , G_{\lambda_2}(x,t) \Big\rangle_{L^2_x} \\
    &=\int_{\mathbb{R}^n} \int_{\mathbb{R}^n}
        \bigg(\int_{\mathbb{R}^n} e^{i(x-y)\cdot\xi + i(t-s)|\xi|^\gamma} \phi(|\xi|)^2 d\xi\bigg)
        F_{\lambda_1}(y,s) G_{\lambda_2}(x,t) dydx,
    \end{align}
and then use the following estimates
\begin{equation}\label{416}
    \bigg| \int_{\mathbb{R}^n} e^{i(x-y)\cdot\xi + i(t-s)|\xi|^\gamma} \phi(|\xi|)^2 d\xi \bigg|
    \leq C \big( 2^j |\lambda_1 -\lambda_2| \big)^{-10n}
\end{equation}
when $|\lambda_1-\lambda_2|\geq\gamma2^{\gamma+1}$, and when $|\lambda_1 -\lambda_2|<\gamma2^{\gamma+1}$
\begin{equation}\label{417}
    \bigg| \int_{\mathbb{R}^n} e^{i(x-y)\cdot\xi + i(t-s)|\xi|^\gamma} \phi(|\xi|)^2 d\xi \bigg|
    \leq CC_{\gamma}(j),
\end{equation}
if $x\in2^j\lambda_1+[0,2^j)^n$, $y\in2^j\lambda_2+[0,2^j)^n$ and $s\in t-I_j$.
Indeed, the first estimate follows by integration by parts since
$$|x-y|\geq2^{j-1}|\lambda_1-\lambda_2|\geq|t-s|\gamma2^{\gamma}$$
when $|\lambda_1-\lambda_2|\geq\gamma2^{\gamma+1}$.
When $|\lambda_1-\lambda_2|<\gamma2^{\gamma+1}$,
we use the following well-known stationary phase lemma\footnote{\,It is essentially due to Littman \cite{L}. See also \cite{St}, VIII, Section 5, B.}, Lemma \ref{25}, with $\varphi(\xi)=|\xi|^\gamma$, $\gamma\geq1$, to get \eqref{417}
for $j\geq1$. When applying Lemma \ref{25},
the Hessian matrix $H\varphi$ has $n$ (or, $n-1$) non-zero eigenvalues for each $\xi\in \{\xi\in \mathbb{R}^n: |\xi|\sim1 \}$
when $\gamma>1$ (or, $\gamma=1$),
and note that $|t-s|\geq2^{j-1}$ for $j\geq1$.
Estimate \eqref{417} for $j=0$ is trivial since $C_{\gamma}(j)=1$.

\begin{lem}\label{25}
Let $H\varphi$ be the Hessian matrix given by $(\frac{\partial^2\varphi}{\partial\xi_i\partial\xi_j})$.
Suppose that $\eta$ is a compactly supported smooth function on $\mathbb{R}^n$
and $\varphi$ is a smooth function satisfying rank $H\varphi\geq k$ on the support of $\eta$.
Then, for $(x,t)\in \mathbb{R}^{n+1}$
$$\bigg|\int e^{i(x\cdot\xi+t\varphi(\xi))}\eta(\xi)d\xi\bigg|
\leq C(1+|(x,t)|)^{-\frac{k}{2}}.$$
\end{lem}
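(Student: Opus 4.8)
The plan is to reduce Lemma \ref{25} to the classical stationary-phase estimate under a rank hypothesis on the Hessian, by passing to polar coordinates in the $(x,t)$ variable. Write $\rho=|(x,t)|$. When $\rho\lesssim1$ the bound is trivial, since $\big|\int e^{i(x\cdot\xi+t\varphi(\xi))}\eta(\xi)\,d\xi\big|\leq\|\eta\|_{L^1}$, so I may assume $\rho$ large and write $(x,t)=\rho(\omega,\sigma)$ with $(\omega,\sigma)\in S^n$, $\omega\in\mathbb{R}^n$, $\sigma\in\mathbb{R}$. The phase then becomes $\rho\Psi(\xi)$ with $\Psi(\xi)=\omega\cdot\xi+\sigma\varphi(\xi)$, so it suffices to prove
\[
\Big|\int e^{i\rho\Psi(\xi)}\eta(\xi)\,d\xi\Big|\leq C\rho^{-k/2}
\]
with $C$ independent of $(\omega,\sigma)\in S^n$. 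I would split according to the size of $|\sigma|$.

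In the nearly non-stationary regime $|\sigma|\leq\varepsilon_0$ (with $\varepsilon_0>0$ small and fixed below), put $M=\sup_{\operatorname{supp}\eta}|\nabla\varphi|$. Then on $\operatorname{supp}\eta$ one has $|\nabla\Psi|=|\omega+\sigma\nabla\varphi|\geq|\omega|-\varepsilon_0M\geq\sqrt{1-\varepsilon_0^2}-\varepsilon_0M\geq1/2$ once $\varepsilon_0$ is chosen small, and all derivatives of $\Psi$ are bounded on $\operatorname{supp}\eta$ uniformly in $(\omega,\sigma)$. Hence $N$ integrations by parts against the transport operator $L=\frac{1}{i\rho|\nabla\Psi|^2}\nabla\Psi\cdot\nabla$, which preserves $e^{i\rho\Psi}$, yield $\big|\int e^{i\rho\Psi}\eta\,d\xi\big|\leq C_N\rho^{-N}$ for every $N$, with $C_N$ uniform in $(\omega,\sigma)$; taking $N\geq k/2$ settles this case.

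In the main regime $|\sigma|\geq\varepsilon_0$ I would factor $\Psi(\xi)=\sigma\big(\varphi(\xi)+a\cdot\xi\big)$ with $a=\omega/\sigma$ ranging over the compact set $\{|a|\leq1/\varepsilon_0\}$, and set $\lambda=\rho|\sigma|\geq\varepsilon_0\rho$. Since adding a linear term leaves the Hessian unchanged, $\varphi_a:=\varphi+a\cdot\xi$ still satisfies $\operatorname{rank}H\varphi_a\geq k$ on $\operatorname{supp}\eta$, so the matter is reduced to the classical bound $\big|\int e^{i\lambda\varphi_a(\xi)}\eta(\xi)\,d\xi\big|\leq C\lambda^{-k/2}$ valid for any smooth real phase whose Hessian has rank $\geq k$ on the support of the amplitude (Littman \cite{L}; see also \cite{St}). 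I would recall its proof: a smooth partition of unity on $\operatorname{supp}\eta$ reduces matters to pieces where either $\nabla\varphi_a\neq0$, handled by integration by parts as above, giving rapid decay, or where $\varphi_a$ has a critical point $\xi_0$; near such a point one selects an invertible $k\times k$ block of $H\varphi_a(\xi_0)$, applies the Morse lemma with parameters to write $\varphi_a=h(\xi'')+\sum_{i=1}^k\pm u_i^2$ in suitable coordinates $\xi\mapsto(u',\xi'')\in\mathbb{R}^k\times\mathbb{R}^{n-k}$, evaluates the Gaussian integral in $u'$ to gain the factor $\lambda^{-k/2}$, and integrates the smooth remainder in $\xi''$ over a compact set. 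The one point needing care is uniformity over $a$: since $\{\varphi_a:|a|\leq1/\varepsilon_0\}$ is a compact family of smooth phases on the fixed compact set $\operatorname{supp}\eta$ with a uniform lower bound on $\operatorname{rank}H\varphi_a$, a routine compactness argument makes the partition, the coordinate changes and all constants uniform in $a$, whence $\sup_{|a|\leq1/\varepsilon_0}\big|\int e^{i\lambda\varphi_a}\eta\,d\xi\big|\leq C\lambda^{-k/2}\leq C\varepsilon_0^{-k/2}\rho^{-k/2}$.

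I expect the only genuinely technical ingredient to be this classical rank-$k$ stationary-phase bound, specifically the Morse-lemma-with-parameters normalization near a degenerate critical point together with the bookkeeping needed to keep all constants uniform over the compact family $\{\varphi_a\}$. Granting that, the lemma follows by merely assembling the two regimes: everything else, namely the trivial small-$\rho$ estimate and the non-stationary integration by parts, is routine.
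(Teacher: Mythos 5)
The paper does not actually prove Lemma \ref{25}: it is quoted as a classical result, with a footnote citing Littman \cite{L} and Stein \cite{St} (Chapter VIII, Section 5, B). So there is no in-paper argument to compare against. Your proof sketch is correct and is essentially the canonical derivation of the joint $(x,t)$-parameter form of Littman's theorem from the one-parameter stationary phase estimate: the polar decomposition $(x,t)=\rho(\omega,\sigma)$, the dichotomy between the nonstationary regime $|\sigma|\le\varepsilon_0$ (where $|\nabla\Psi|\ge\sqrt{1-\varepsilon_0^2}-\varepsilon_0 M$ gives rapid decay by integration by parts) and the main regime $|\sigma|\ge\varepsilon_0$ (where you factor $\Psi=\sigma(\varphi+a\cdot\xi)$ and invoke the rank-$k$ estimate with $\lambda=\rho|\sigma|$), and the compactness argument for uniformity over $|a|\le 1/\varepsilon_0$ are all standard and correctly handled. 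Two small remarks: (i) since only a linear term is added, $H\varphi_a=H\varphi$ for all $a$, so the rank hypothesis is automatically preserved exactly as you say; (ii) your invocation of the fibered Morse lemma is a little heavier than needed---one may instead fix $k$ coordinate directions in which the $k\times k$ Hessian block is nondegenerate, apply one-dimensional-in-each-slice (i.e.\ $k$-dimensional nondegenerate) stationary phase by Fubini to gain $\lambda^{-k/2}$ in those variables, and simply integrate the bounded remainder over the compact set in the transverse variables. This avoids parameter-dependent coordinate changes and makes the uniformity argument more transparent, but your route is sound.
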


Let us now denote
\begin{equation*}
C_{\gamma,\lambda_1,\lambda_2}(j) =
\begin{cases}
\big( 2^j |\lambda_1 -\lambda_2| \big)^{-10n}\quad\text{if}\quad|\lambda_1-\lambda_2|\geq\gamma2^{\gamma+1},\\
C_\gamma(j)\quad\text{if}\quad|\lambda_1 -\lambda_2|<\gamma2^{\gamma+1}.
\end{cases}
\end{equation*}
Using \eqref{kerr}, \eqref{416} and \eqref{417}, we then have
    \begin{align}\label{tty}
   \nonumber\int_{2^{j}k}^{2^{j}(k+1)}& \int_{t- I_j}
        \Big| \Big\langle e^{i(t-s)(-\Delta)^{\gamma/2}} P_0^2 F_{\lambda_1}(\cdot,s) , G_{\lambda_2}(x,t) \Big\rangle_{L^2_x} \Big| ds dt \\
    \nonumber\leq &C C_{\gamma,\lambda_1,\lambda_2}(j)
        \int_{2^{j}k}^{2^{j}(k+1)} \int_{t- I_j}
        \int_{\mathbb{R}^n} \int_{\mathbb{R}^n}
        \big| F_{\lambda_1}(y,s) G_{\lambda_2}(x,t) \big| dydxdsdt \\
    \leq &C C_{\gamma,\lambda_1,\lambda_2}(j)
        \int_{2^{j}k}^{2^{j}(k+1)} \int_{2^{j}(k-1)}^{2^{j}(k+\frac12)} \|F_{\lambda_1}(\cdot,s)\|_{L^1_y} \|G_{\lambda_2}(\cdot,t)\|_{L^1_x} dsdt.
    \end{align}
Notice here that
    $$
    \begin{aligned}
    &\int_{2^{j}(k-1)}^{2^{j}(k+\frac12)}\|F_{\lambda_1}(\cdot,s)\|_{L^1_y} ds \\
    &\quad=\int_{2^{j}(k-1)}^{2^{j}(k+\frac12)} \int_{y\in 2^j \lambda_1 +[0,2^j)^n} |F_{\lambda_1}(y,s)| \cdot |w(y,s)|^{-p/2} \cdot|w(y,s)|^{p/2} dyds \\
    &\quad\leq\|F_{\lambda_1} \chi_{[2^{j}(k-1), 2^{j}(k+\frac12))} \|_{L^2_{y,s}(|w|^{-p})} \bigg( \int_{2^{j}(k-1)}^{2^{j}(k+\frac12)}
    \int_{y\in 2^j \lambda +[0,2^j)^n} |w(y,s)|^{p} dyds \bigg)^{\frac{1}{2}} \\
    &\quad\leq2^{j(\frac{n+\gamma}{p} -\alpha)\frac{p}{2}} \|w\|_{\mathfrak{L}^{\alpha,p}_{\gamma}}^{p/2} \|F_{\lambda_1} \chi_{[2^{j}(k-1), 2^{j}(k+\frac12))} \|_{L^2_{y,s}(|w|^{-p})}.
    \end{aligned}
    $$
Here, for the second inequality we used the definition of the norm $\|w\|_{\mathfrak{L}^{\alpha,p}_{\gamma}}$
together with
$$[2^{j}(k-1),2^{j}(k+\frac12))\subset I_k$$
for some interval $I_k$ of length $2\times2^{j\gamma}$.
Similarly,
    $$
    \int_{2^{j}k}^{2^{j}(k+1)} \|G_{\lambda_2}(\cdot,t)\|_{L^1_x} dt
    \leq 2^{j(\frac{n+\gamma}{p} -\alpha)\frac{p}{2}} \|w\|_{\mathfrak{L}^{\alpha,p}_{\gamma}}^{p/2} \|G_{\lambda_2} \chi_{[2^{j}k, 2^{j}(k+1))} \|_{L^2_{x,t}(|w|^{-p})}.
    $$
Applying these estimates to \eqref{tty} and then using the Cauchy-Schwarz inequality in $k$, one can see that
    \begin{align}\label{thn}
    \nonumber&\sum_{k\in\mathbb{Z}}\sum_{\lambda_1, \lambda_2\in\mathbb{Z}^n}
    \int_{2^{j}k}^{2^{j}(k+1)} \int_{t- I_j}
    \Big|\Big\langle e^{i(t-s)(-\Delta)^{\gamma/2}} P_0^2 F_{\lambda_1}(\cdot,s) , G_{\lambda_2}(x,t) \Big\rangle_{L^2_x} \Big| dsdt \\
    &\leq C 2^{j(\frac{n+\gamma}{p} -\alpha)p} \|w\|_{\mathfrak{L}^{\alpha,p}_{\gamma}}^{p} \sum_{\lambda_1, \lambda_2\in\mathbb{Z}^n}
    C_{\gamma,\lambda_1,\lambda_2}(j)
    \|F_{\lambda_1} \|_{L^2_{y,s}(|w|^{-p})} \|G_{\lambda_2} \|_{L^2_{x,t}(|w|^{-p})}.
    \end{align}

When $|\lambda_1 -\lambda_2|<\gamma2^{\gamma+1}$, the Cauchy-Schwarz inequality in $\lambda_1$ implies
    $$
    \begin{aligned}
    &\sum_{\{\lambda_1, \lambda_2: |\lambda_1 -\lambda_2|<\gamma2^{\gamma+1}\}}
        C_{\gamma,\lambda_1,\lambda_2}(j) \|F_{\lambda_1} \|_{L^2_{y,s}(|w|^{-p})} \|G_{\lambda_2} \|_{L^2_{x,t}(|w|^{-p})} \\
    &\qquad\leq C_\gamma(j) \Big( \sum_{\lambda_1}
        \|F_{\lambda_1} \|_{L^2_{y,s}(|w|^{-p})}^2 \Big)^{\frac{1}{2}}
        \Big( \sum_{\lambda_1} \Big( \sum_{\{\lambda_2:|\lambda_1 -\lambda_2|<\gamma2^{\gamma+1}\}} \|G_{\lambda_2} \|_{L^2_{x,t}(|w|^{-p})} \Big)^2 \Big)^{\frac{1}{2}} \\
    &\qquad\leq C C_\gamma(j) \|F\|_{L^2_{y,s}(|w|^{-p})} \|G\|_{L^2_{x,t}(|w|^{-p})}.
    \end{aligned}
    $$
This and \eqref{thn} imply \eqref{show} as desired.
For the case where $|\lambda_1-\lambda_2|\geq\gamma2^{\gamma+1}$, we first write
     $$
    \begin{aligned}
    \sum_{\{\lambda_1, \lambda_2: |\lambda_1 -\lambda_2|\geq\gamma2^{\gamma+1}\}}&
        C_{\gamma,\lambda_1,\lambda_2}(j) \|F_{\lambda_1} \|_{L^2_{y,s}(|w|^{-p})} \|G_{\lambda_2} \|_{L^2_{x,t}(|w|^{-p})} \\
    =&\sum_{\{\lambda_1, \lambda_2: |\lambda_1 -\lambda_2|\geq\gamma2^{\gamma+1},|\lambda_1|\geq1\}}+
    \sum_{\{\lambda_1, \lambda_2: |\lambda_1 -\lambda_2|\geq\gamma2^{\gamma+1},|\lambda_1|<1\}}.
    \end{aligned}
    $$
By H\"older's inequality in $\lambda_2$ and then Young's inequality, the first term in the right-hand side is then bounded by
  $$
    \begin{aligned}
   &\leq  2^{-10n j} \Big\| \sum_{\{\lambda_1: |\lambda_1|\geq1\}}
   (|\lambda_1 - \lambda_2|)^{-10n}
    \|F_{\lambda_1} \|_{L^2_{y,s}(|w|^{-p})}  \Big\|_{l^2}
        \Big( \sum_{\lambda_2\in\mathbb{Z}^n} \|G_{\lambda_2}\|_{L^2_{x,t}(|w|^{-p})}^2 \Big)^{\frac{1}{2}} \\
   &\leq  2^{-10n j}\Big( \sum_{\{\lambda_1: |\lambda_1|\geq1\}} |\lambda_1|^{-10n}\Big)
        \Big( \sum_{\lambda_1\in\mathbb{Z}^n} \|F_{\lambda_1}\|_{L^2_{x,t}(|w|^{-p})}^2 \Big)^{\frac{1}{2}}
         \Big( \sum_{\lambda_2\in\mathbb{Z}^n} \|G_{\lambda_2}\|_{L^2_{x,t}(|w|^{-p})}^2 \Big)^{\frac{1}{2}} \\
    &\leq C2^{-10n j}\|F\|_{L^2_{y,s}(|w|^{-p})}\|G\|_{L^2_{x,t}(|w|^{-p})},
    \end{aligned}
    $$
while the second term is bounded as follows:
 $$
    \begin{aligned}
   &\sum_{\{\lambda_2: |\lambda_2|\geq\gamma2^{\gamma+1}\}}
        (2^j|\lambda_2|)^{-10n}
        \|F_{0}\|_{L^2_{y,s}(|w|^{-p})} \|G_{\lambda_2}\|_{L^2_{x,t}(|w|^{-p})} \\
   &\qquad\leq2^{-10n j}\|F_{0}\|_{L^2_{y,s}(|w|^{-p})}\Big( \sum_{\{\lambda_2: |\lambda_2|\geq1\}} |\lambda_2|^{-20n}\Big)^{1/2}
         \Big( \sum_{\lambda_2\in\mathbb{Z}^n} \|G_{\lambda_2}\|_{L^2_{x,t}(|w|^{-p})}^2 \Big)^{\frac{1}{2}} \\
    &\qquad\leq C2^{-10n j}\|F\|_{L^2_{y,s}(|w|^{-p})}\|G\|_{L^2_{x,t}(|w|^{-p})}.
    \end{aligned}
    $$
Therefore, we get \eqref{show} in this case as well.

\section{Proofs of the inhomogeneous estimates}\label{sec5}

In this section we prove the inhomogeneous estimates in Propositions \ref{prop}, \ref{prop00} and \ref{prop010}
which are additionally needed to obtain our well-posedness results in the next section.

\subsection{Proof of Proposition \ref{prop}}\label{subsec5.1}

When $\gamma=2$, the inhomogeneous estimate \eqref{higher3} follows from H\"older's inequality
along with the following classical estimate due to Strichartz (\cite{Str2}),
\begin{equation}\label{casd}
\bigg\|\int_{0}^{t}e^{-i(t-s)\Delta} F(\cdot,s)ds\bigg\|_{L^q(\mathbb{R}^{n+1})}\leq C\|F\|_{L^{q'}(\mathbb{R}^{n+1})},
\end{equation}
where $q=\frac{2(n+2)}{n}$.
Indeed,
\begin{align*}
\bigg\|\int_{0}^{t}e^{i(t-s)(-\Delta)^{\gamma/2}} F(\cdot,s)ds\bigg\|_{L^2(w(x,t))}
&\leq \|w^{1/2}\|_{L^{\frac{2q}{q-2}}}\bigg\|\int_{0}^{t}e^{i(t-s)(-\Delta)^{\gamma/2}} F(\cdot,s)ds\bigg\|_{L^q}\\
&\leq C\|w^{1/2}\|_{L^{\frac{2q}{q-2}}}\|F\|_{L^{q'}}\\
&\leq C\|w^{1/2}\|_{L^{\frac{2q}{q-2}}}\|w^{1/2}\|_{L^{\frac{2q'}{2-q'}}}\|w^{-1/2}F\|_{L^{2}}\\
&=C\|w\|_{L^{\frac{n+2}{2}}}\|F\|_{L^{2}(w(x,t)^{-1})}
\end{align*}
as desired. Recall here that $\mathfrak{L}^{2,p}_2=L^p$ with $p=\frac{n+2}{2}$.

On the other hand, the case $\gamma>2$ follows from a similar argument as in the corresponding homogeneous estimate \eqref{higher}.
Indeed, to show \eqref{higher3} in this case, we may first assume that
$w(\cdot,t)\in A_2(\mathbb{R}^n)$ with $C_{A_2}$ which is uniform in almost every $t\in\mathbb{R}$
and independent of $w\in\mathfrak{L}^{\alpha,p}_{\gamma}$, with $\alpha>\gamma/p$ and $p>1$,
as in the homogeneous case (see the first paragraph below Proposition \ref{Prop_F_local}).
Then we shall use the following estimate
	\begin{equation}\label{F_local_inho2}
	\bigg\|\int_{0}^{t}e^{i(t-s)(-\Delta)^{\gamma/2}}P_k F(\cdot,s)ds\bigg\|_{L^2(w(x,t))}
	\leq C2^{k(\alpha -\gamma)}\|w\|_{\mathfrak{L}^{\alpha,p}_{\gamma}} \|F\|_{L^2(w(x,t)^{-1})},
	\end{equation}
where $n\geq1$, $p>1$, $\gamma>1$ and $\alpha>1+\frac{n-2+2\gamma}{2p}$.
Here, $P_k$ is the Littlewood-Paley projection as in Section \ref{sec3}.
Assuming for the moment this estimate, by the Littlewood-Paley theorem on weighted $L^2$ spaces as before,
one can see that
    \begin{align*}
    \bigg\|\int_{0}^{t}e^{i(t-s)(-\Delta)^{\gamma/2}}&F(\cdot,s)ds\bigg\|_{L^2(w(x,t))}^2\\
    \leq \sum_{k}&\bigg\|\int_{0}^{t}e^{i(t-s)(-\Delta)^{\gamma/2}} P_k \big(\sum_{|j-k|\leq1}P_j F(\cdot,s)\big)ds \bigg\|_{L^2(w(x,t))}^2
    \end{align*}
which is in turn bounded by
$$C\|w\|_{\mathfrak{L}^{\alpha,p}_\gamma}^2
\sum_{k} 2^{2k(\alpha -\gamma)} \big\|\sum_{|j-k|\leq1} P_j F\big\|_{L^2(w(x,t)^{-1})}^2$$
if $n\geq1$, $p>1$, $\gamma>1$ and $\alpha>1+\frac{n-2+2\gamma}{2p}$, via \eqref{F_local_inho2}.
Since $w(\cdot,t)^{-1}\in A_2(\mathbb{R}^n)$ if and only if $w(\cdot,t)\in A_2(\mathbb{R}^n)$,
applying the Littlewood-Paley theorem with $\alpha=\gamma$ once more implies
$$\bigg\|\int_{0}^{t}e^{i(t-s)(-\Delta)^{\gamma/2}} F(\cdot,s)ds\bigg\|_{L^2(w(x,t))}
\leq C\|w\|_{\mathfrak{L}^{\gamma,p}_\gamma} \|F\|_{L^2(w(x,t)^{-1})}$$
if $n\geq1$, $\gamma>2$ and $\max\{1,\frac{n+2(\gamma-1)}{2(\gamma-1)}\}<p\leq\frac{n+\gamma}{\gamma}$, as desired.
Now it remains to show \eqref{F_local_inho2} which follows from the case $k=0$ by the scaling,
but we have already obtained the following estimate (see \eqref{eno}) under the same conditions as in \eqref{F_local_inho2}:
\begin{equation}\label{eno8}
    \bigg\|\int_{-\infty}^t e^{i(t-s)(-\Delta)^{\gamma/2}}P_0^2F(\cdot,s)ds\bigg\|_{L^2(w(x,t))}
\leq C\|w\|_{\mathfrak{L}^{\alpha,p}_{\gamma}} \|F\|_{L^2(w(x,t)^{-1})}
    \end{equation}
which is stronger than the desired estimate given by replacing $\int_{-\infty}^t$ in \eqref{eno8} by $\int_{0}^t$.

\subsection{Proof of Proposition \ref{prop00}}
First we rewrite \eqref{F_local_inho2} with $\gamma$ replaced by $\gamma/2$ as
\begin{equation*}
\bigg\|\int_{0}^{t}e^{i(t-s)\sqrt{(-\Delta)^{\gamma/2}}} P_k F(\cdot,s)ds\bigg\|_{L^2(w(x,t))}
\leq C2^{k(\alpha -\frac{\gamma}{2})} \|w\|_{\mathfrak{L}^{\alpha,p}_{\gamma/2}} \|F\|_{L^2(w(x,t)^{-1})},
\end{equation*}
where $n\geq1$, $p>1$, $\gamma>2$ and $\alpha>1+\frac{n-2+\gamma}{2p}$.
Since the Fourier support of $P_k F$ is contained in $\{\xi\in\mathbb{R}^n:|\xi|\sim2^k\}$,
this estimate immediately gives
\begin{equation*}
\bigg\|\int_{0}^{t} \frac{ e^{i(t-s)\sqrt{(-\Delta)^{\gamma/2}}} }{\sqrt{(-\Delta)^{\gamma/2}}} P_k F(\cdot,s)ds\bigg\|_{L^2(w(x,t))}
\leq C2^{k(\alpha -\gamma)} \|w\|_{\mathfrak{L}^{\alpha,p}_{\gamma/2}} \|F\|_{L^2(w(x,t)^{-1})},
\end{equation*}
which in turn implies
\begin{equation*}
\bigg\|\int_{0}^{t} \frac{ e^{i(t-s)\sqrt{(-\Delta)^{\gamma/2}}} }{\sqrt{(-\Delta)^{\gamma/2}}} F(\cdot,s)ds\bigg\|_{L^2(w(x,t))}
\leq C \|w\|_{\mathfrak{L}^{\gamma,p}_{\gamma/2}} \|F\|_{L^2(w(x,t)^{-1})}
\end{equation*}
if $n\geq2$, $2<\gamma<2n$ and $\max\{1,\frac{n-2+\gamma}{2(\gamma-1)}\}<p\leq\frac{2n+\gamma}{2\gamma}$,
by the Littlewood-Paley theorem as in Subsection \ref{subsec5.1}.
Estimate \eqref{121} is now proved except for the case $\gamma=2$.

But, the case $\gamma=2$ follows from the same argument as in Subsection \ref{subsec5.1}
using the classical estimate due to Strichartz (\cite{Str}),
$$\bigg\|\int_{0}^{t} \frac{ e^{i(t-s)\sqrt{-\Delta}} }{\sqrt{-\Delta}} F(\cdot,s)ds\bigg\|_{L^q(\mathbb{R}^{n+1})}\leq C\|F\|_{L^{q'}(\mathbb{R}^{n+1})},$$
where $q=\frac{2(n+1)}{n-1}$ and $n\geq2$, instead of \eqref{casd}.

\subsection{Proof of Proposition \ref{prop010}}

To show the estimate \eqref{fracver} when $\gamma>2$, we first recall from \cite{SW} that
the fractional integral $I_\alpha$ of convolution with $|x|^{-n+\alpha}$, $0<\alpha<n$,
satisfies the inequality
\begin{equation*}
\|I_\alpha f\|_{L^2(w(x))}\leq C\|w\|_{\mathfrak{L}^{\alpha,r}}^{1/2}\|f\|_{L^2},
\end{equation*}
where $\alpha>0$ and $1<r\leq n/\alpha$.
Indeed, this inequality follows directly from (1.10) in \cite{SW} with $w=v^{-1}$ and $p=2$.
Using this inequality and then Plancherel's theorem, we then see that if $\alpha>0$ and $1<r\leq n/\alpha$,
\begin{align*}
\bigg\|\int_{0}^{t}e^{i(t-s)\sqrt{(-\Delta)^{\gamma/2}}}&|\nabla|^{-\alpha}F(\cdot,s)ds\bigg\|_{L_x^2(w(x,t))}\\
\leq C&\|w(\cdot,t)\|_{\mathfrak{L}^{\alpha,r}}^{1/2}\bigg\|\int_{0}^{t}e^{i(t-s)\sqrt{(-\Delta)^{\gamma/2}}}F(\cdot,s)ds\bigg\|_{L_x^2}\\
\leq C&\|w(\cdot,t)\|_{\mathfrak{L}^{\alpha,r}}^{1/2}\bigg\|\int_{\mathbb{R}} e^{-is\sqrt{(-\Delta)^{\gamma/2}}}\chi_{(0,t)}(s)F(\cdot,s)ds\bigg\|_{L_x^2}.
\end{align*}
(Notice here that the integral kernel of the multiplier operator $|\nabla|^{-\alpha}$ is given by $|x|^{-n+\alpha}$, $0<\alpha<n$.)
Combining this estimate and the following dual estimate of \eqref{higher} with $\gamma$ replaced by $\gamma/2$,
\begin{equation}\label{dual3}
\bigg\|\int_{\mathbb{R}} e^{-is\sqrt{(-\Delta)^{\gamma/2}}}F(\cdot,s)ds\bigg\|_{L_x^2}
\leq C\|w\|_{\mathfrak{L}^{2s+\gamma/2,p}_\gamma}^{1/2}\||\nabla|^{s}F\|_{L^2(w(x,t)^{-1})},
\end{equation}
we get
\begin{equation*}
\bigg\|\int_{0}^{t}e^{i(t-s)\sqrt{(-\Delta)^{\gamma/2}}}F(\cdot,s)ds\bigg\|_{L^2(w)}
\leq C\|w\|_{L_t^1\mathfrak{L}^{\alpha,r}}^{1/2}\|w\|_{\mathfrak{L}^{2s+\gamma/2,p}_\gamma}^{1/2}\||\nabla|^{\alpha+s}F\|_{L^2(w^{-1})}
\end{equation*}
for $\alpha>0$, $1<r\leq n/\alpha$, $-\frac{(\gamma-4)n}{4(n+2)} <s< \frac n2$
and $\max\{1,\frac{n+\gamma-2}{4s+\gamma-2}\} <p\leq \frac{2n+\gamma}{4s+\gamma}$.
Applying this estimate with $\alpha+s=\gamma/2$ implies
\begin{equation*}
\bigg\|\int_{0}^{t}\frac{e^{i(t-s)\sqrt{(-\Delta)^{\gamma/2}}}}{\sqrt{(-\Delta)^{\gamma/2}}}F(\cdot,s)ds\bigg\|_{L^2(w(x,t))}
\leq C\|w\|_{L_t^1\mathfrak{L}^{\gamma/2-s,r}}^{1/2}\|w\|_{\mathfrak{L}^{2s+\gamma/2,p}_\gamma}^{1/2}\|F\|_{L^2(w(x,t)^{-1})}
\end{equation*}
for $-\frac{(\gamma-4)n}{4(n+2)}<s<\frac12\min\{\gamma,n\}$, $1<r\leq \frac{2n}{\gamma-2s}$ and $\max\{1,\frac{n+\gamma-2}{4s+\gamma-2}\} <p\leq \frac{2n+\gamma}{4s+\gamma}$ when $2<\gamma<2n+2s$,
as desired.

The case $\gamma=2$ follows from the same argument replacing \eqref{dual3} by the following dual estimate of \eqref{wave},
\begin{equation*}
\bigg\|\int_{\mathbb{R}} e^{-is\sqrt{-\Delta}}F(\cdot,s)ds\bigg\|_{L_x^2}
\leq C\|w\|_{\mathfrak{L}^{2s+1,p}_1}^{1/2}\||\nabla|^{s}F\|_{L^2(w(x,t)^{-1})}.
\end{equation*}

\section{Applications to well-posedness}\label{sec6}

 In this section we present a few applications of our estimates to the global well-posedness of the Schr\"odinger and wave equations
 with small potential perturbations in the weighted $L^2$ setting.

\subsection{The Schr\"odinger equations}
Let us start by considering the following Cauchy problem concerning Schr\"odinger equations of order $\gamma\geq2$
allowing perturbations with time-dependent potentials:
\begin{equation}\label{[Welpo]sch}
\begin{cases}
i\partial_tu-(-\Delta)^{\gamma/2} u +V(x,t)u =F(x,t),\\
u(x,0)=f(x).
\end{cases}
\end{equation}

As we have said before, our interest here is to find a suitable condition on the potentials $V(x,t)$
which guarantees that the problem \eqref{[Welpo]sch} with $L^2$ initial data $f$ is globally well-posed in the weighted $L^2$ space, $L^2(|V|dxdt)$.
Furthermore, it turns out that the solution $u$ belongs to $C_tL_x^2$.
More precisely, we have the following result.

\begin{thm}\label{[Appl]Sch_thm}
Let $n\geq1$ and $\gamma \geq2$.
Assume that $V\in\mathfrak{L}^{\gamma,p}_\gamma$
for $\frac{n+2(\gamma-1)}{2(\gamma-1)} <p\leq \frac{n+\gamma}{\gamma}$ if $\gamma>2$ and for $p=\frac{n+2}{2}$ if $\gamma=2$,
with $\|V\|_{\mathfrak{L}^{\gamma,p}_\gamma}$ small enough.
Then, if $f\in L^2$ and $F\in L^2(|V|^{-1})$, there exists a unique solution of the problem \eqref{[Welpo]sch} in the space $L^2(|V|)$.
	Furthermore, the solution $u$ belongs to $C_tL_x^2$ and satisfies the following inequalities:
	\begin{equation}\label{[Appl]sch:1-1}
	\|u\|_{L^2(|V|)}\leq C\|V\|_{\mathfrak{L}^{\gamma,p}_\gamma}^{1/2}\|f\|_{L^2}+
C\|V\|_{\mathfrak{L}^{\gamma,p}_\gamma}\|F\|_{L^2(|V|^{-1})}
	\end{equation}
	and
	\begin{equation}\label{[Appl]sch:1-2}
	\sup_{t\in\mathbb{R}}\|u\|_{L_x^2}\leq C\|f\|_{L^2}+C\|V\|_{\mathfrak{L}^{\gamma,p}_\gamma}^{1/2}\|F\|_{L^2(|V|^{-1})}
	\end{equation}
\end{thm}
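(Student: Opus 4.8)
The plan is to run a Banach fixed-point argument in the weighted space $X:=L^2(|V|\,dxdt)$, taking as linear input the homogeneous estimate of Theorem \ref{thm2} at $s=0$ with $w=|V|$ (for $\gamma>2$ this is covered directly by Theorem \ref{thm2}, while for $\gamma=2$ the $s=0$ endpoint of \eqref{depen2} is exactly \eqref{wstr} with $p=\frac{n+2}{2}$, coming from classical Strichartz) together with the inhomogeneous estimate of Proposition \ref{prop}. In view of the Duhamel representation \eqref{solsch}, define
$$\Phi(u)(x,t)=e^{-it(-\Delta)^{\gamma/2}}f(x)-i\int_0^te^{-i(t-s)(-\Delta)^{\gamma/2}}\big(F-Vu\big)(\cdot,s)\,ds.$$
First I would record the pointwise identity $|V|^2|u|^2|V|^{-1}=|V||u|^2$, which gives $\|Vu\|_{L^2(|V|^{-1})}=\|u\|_X$, hence $\|F-Vu\|_{L^2(|V|^{-1})}\le\|F\|_{L^2(|V|^{-1})}+\|u\|_X$. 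Applying Theorem \ref{thm2} (at $s=0$) to the free evolution and Proposition \ref{prop} (with $w=|V|$) to the Duhamel integral then yields
$$\|\Phi(u)\|_X\le C\|V\|_{\mathfrak{L}^{\gamma,p}_\gamma}^{1/2}\|f\|_{L^2}+C\|V\|_{\mathfrak{L}^{\gamma,p}_\gamma}\big(\|F\|_{L^2(|V|^{-1})}+\|u\|_X\big),$$
and, since $\Phi(u)-\Phi(v)=i\int_0^te^{-i(t-s)(-\Delta)^{\gamma/2}}V(u-v)(\cdot,s)\,ds$, also $\|\Phi(u)-\Phi(v)\|_X\le C\|V\|_{\mathfrak{L}^{\gamma,p}_\gamma}\|u-v\|_X$.

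Choosing $\|V\|_{\mathfrak{L}^{\gamma,p}_\gamma}$ small enough that $C\|V\|_{\mathfrak{L}^{\gamma,p}_\gamma}\le 1/2$, the map $\Phi$ is a contraction on a closed ball of $X$, so it has a unique fixed point $u\in X$; this $u$ solves \eqref{[Welpo]sch}, and the estimate \eqref{[Appl]sch:1-1} follows by reading it off the fixed-point equation. Uniqueness in all of $X$ (not only in the ball) is immediate: the difference of two solutions with the same data satisfies $\|u_1-u_2\|_X\le C\|V\|_{\mathfrak{L}^{\gamma,p}_\gamma}\|u_1-u_2\|_X$, forcing $u_1=u_2$.

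To obtain $u\in C_tL_x^2$ and \eqref{[Appl]sch:1-2}, write $u(\cdot,t)=e^{-it(-\Delta)^{\gamma/2}}f-i\,e^{-it(-\Delta)^{\gamma/2}}\int_0^te^{is(-\Delta)^{\gamma/2}}\widetilde F(\cdot,s)\,ds$ with $\widetilde F:=F-Vu$. By unitarity of the propagator on $L_x^2$, $\|u(\cdot,t)\|_{L_x^2}\le\|f\|_{L^2}+\|\int_{\mathbb R}\chi_{(0,t)}(s)\,e^{is(-\Delta)^{\gamma/2}}\widetilde F(\cdot,s)\,ds\|_{L_x^2}$, and the dual form of Theorem \ref{thm2} at $s=0$, namely $\|\int_{\mathbb R}e^{is(-\Delta)^{\gamma/2}}G(\cdot,s)\,ds\|_{L_x^2}\le C\|w\|_{\mathfrak{L}^{\gamma,p}_\gamma}^{1/2}\|G\|_{L^2(w^{-1})}$ applied with $w=|V|$ and $G=\chi_{(0,t)}\widetilde F$, bounds the last term by $C\|V\|_{\mathfrak{L}^{\gamma,p}_\gamma}^{1/2}\|\widetilde F\|_{L^2(|V|^{-1})}$ uniformly in $t$; note that the time cut-off sits on $\widetilde F$ itself, so no Christ--Kiselev lemma is needed here. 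Combining this with $\|\widetilde F\|_{L^2(|V|^{-1})}\le\|F\|_{L^2(|V|^{-1})}+\|u\|_X$, the bound on $\|u\|_X$ from \eqref{[Appl]sch:1-1}, and the smallness of $\|V\|_{\mathfrak{L}^{\gamma,p}_\gamma}$, one arrives at \eqref{[Appl]sch:1-2}. Continuity of $t\mapsto u(\cdot,t)$ in $L_x^2$ then follows by density: the free part is $L_x^2$-continuous by strong continuity of the unitary group, and for the Duhamel part one first checks continuity for $\widetilde F$ smooth with compact support and then passes to the limit using the uniform bound just established.

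The contraction iteration and the arithmetic of absorbing the $C\|V\|_{\mathfrak{L}^{\gamma,p}_\gamma}$ factors are routine once the linear estimates of Theorem \ref{thm2} and Proposition \ref{prop} are in hand. The points requiring care are the verification that the $s=0$ endpoint of the homogeneous estimate is genuinely available (trivial for $\gamma>2$, and for $\gamma=2$ reducing to the classical estimate \eqref{wstr} rather than to Theorem \ref{thm}, which is stated only for $s>0$) and the passage from the uniform-in-$t$ bound to genuine $C_tL_x^2$-continuity via approximation.
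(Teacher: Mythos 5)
Your proof is correct and follows essentially the same route as the paper: Duhamel representation, Theorem \ref{thm2} (or its $s=0$ substitute) for the free part, Proposition \ref{prop} for the Duhamel integral, smallness of $\|V\|_{\mathfrak{L}^{\gamma,p}_\gamma}$ for the contraction, and the dual homogeneous estimate plus unitarity for the $C_tL_x^2$ bound. Your explicit remark that the $s=0$, $\gamma=2$ homogeneous endpoint falls outside the stated range of Theorem \ref{thm2} and must instead be taken from the classical Strichartz bound \eqref{wstr} with $p=\tfrac{n+2}{2}$ is a sharper reading than the paper's own citation, which invokes Theorem \ref{thm2} uniformly; this is a genuine, if minor, clarification.
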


The well-posedness for linear Schr\"odinger equations with potentials has been studied by many authors
(see, for example, \cite{RV,RV2,D'APV,NS,BBRV,S2,KoS2,CKS,KoS3}).
In the context of the weighted $L^2$ setting, it has been handled
in \cite{RV2,BBRV,S2} essentially for small time-independent potentials $V(x)$ of Morrey-Campanato type.
The case of time-dependent potentials was first handled in our previous work \cite{KoS2} where
we obtain Theorem \ref{[Appl]Sch_thm} for higher orders $\gamma>(n+2)/2$
and potentials in a more restrictive class $\mathfrak{L}^{\alpha,\beta,p}$ of Morrey-Campanato type (see \eqref{diff}).
To obtain Theorem \ref{[Appl]Sch_thm} allowing potentials in $\gamma$-order anisotropic Morrey-Campanato classes
$\mathfrak{L}^{\alpha,p}_\gamma$,
which are the most natural ones of Morrey-Campanato type adapted to scaling structure of the Schr\"odinger equations,
we have also attempted in \cite{KoS3} though only for radially symmetric potentials and solutions.
In this regard, the main contribution of the present theorem is to remove assumptions on radial symmetry.

\begin{proof}[Proof of Theorem \ref{[Appl]Sch_thm}]
The proof based on the fixed point argument is rather standard once one has the weighted $L^2$ Strichartz estimates
but we provide a proof for completeness.

The starting point is to write the solution of \eqref{[Welpo]sch} as the integral equation
\begin{equation}\label{[Appl]sol}
u(x,t)= e^{-it(-\Delta)^{\gamma/2}}f(x)-i\int_0^te^{-i(t-s)(-\Delta)^{\gamma/2}}F(\cdot,s)ds+\Phi(u)(x,t),
\end{equation}
where
$$\Phi(u)(x,t)=i\int_0^te^{-i(t-s)(-\Delta)^{\gamma/2}}(Vu)(\cdot,s)ds.$$
Here we note that
$$(I-\Phi)(u)=e^{-it(-\Delta)^{\gamma/2}}f(x)-i\int_0^te^{-i(t-s)(-\Delta)^{\gamma/2}}F(\cdot,s)ds,$$
where $I$ is the identity operator.
Then, since $f\in L^2$ and $F\in L^2(|V|^{-1})$,
by applying the weighted $L^2$ Strichartz estimates in Theorem \ref{thm2} and Proposition \ref{prop} with $w=|V|$,
one can see that
$$(I-\Phi)(u)\in L^2(|V|).$$
Now it is enough to show that
the operator $I-\Phi$ has an inverse in the space $L^2(|V|)$, needed for the fixed point argument.
But, this follows from the fact that the operator norm for $\Phi$ in the space $L^2(|V|)$
is strictly less than $1$.
Namely,
$$\|\Phi(u)\|_{L^2(|V|)}<\frac12\|u\|_{L^2(|V|)}.$$
Indeed, from the inhomogeneous estimate in Proposition \ref{prop} with $w=|V|$, one can see
\begin{align}\label{[Appl]inv}
\|\Phi(u)\|_{L^2(|V|)}
\leq C\|V\|_{\mathfrak{L}^{\gamma,p}_{\gamma}} \|Vu\|_{L^2(|V|^{-1})}
<\frac12\|u\|_{L^2(|V|)}
\end{align}
because of the smallness assumption on the norm $\|V\|_{\mathfrak{L}^{\gamma,p}_{\gamma}}$.

It remains to show \eqref{[Appl]sch:1-1} and \eqref{[Appl]sch:1-2}.
From \eqref{[Appl]sol}, \eqref{[Appl]inv} and Theorem \ref{thm2}, it follows that
\begin{align}\label{[Appl]sch_homo}
\nonumber\|u\|_{L^2(|V|)}
&\leq C\big\|e^{-it(-\Delta)^{\gamma/2}} f \big\|_{L^2(|V|)}
+C\big\|\int_0^te^{-i(t-s)(-\Delta)^{\gamma/2}}F(\cdot,s)ds\big\|_{L^2(|V|)}\\
&\leq C\|V\|_{\mathfrak{L}^{\gamma,p}_\gamma}^{1/2}\|f\|_{L^2}+
C\|V\|_{\mathfrak{L}^{\gamma,p}_\gamma}\|F\|_{L^2(|V|^{-1})}
\end{align}
which is \eqref{[Appl]sch:1-1}.
On the other hand, \eqref{[Appl]sch:1-2} follows from making use of \eqref{[Appl]sch_homo} and the following dual estimate
\begin{equation}\label{[Appl]dual}
\bigg\|\int_{-\infty}^{\infty} e^{is(-\Delta)^{\gamma/2}} F(\cdot,s)ds\bigg\|_{L_x^2}
\leq C\|w\|_{\mathfrak{L}^{\gamma,p}_{\gamma}}^{1/2} \|F\|_{L^2(w(x,t)^{-1})}
\end{equation}
of \eqref{higher} with $s=0$.
Indeed, from \eqref{[Appl]sol}, \eqref{[Appl]dual} with $w=|V|$, and the simple fact that $e^{it(-\Delta)^{\gamma/2}}$ is an isometry in $L^2$,
it follows that
$$\|u\|_{L_x^2}\leq C\|f\|_{L^2}
+C\|V\|_{\mathfrak{L}^{\gamma,p}_{\gamma}}^{1/2} \|F\|_{L^2(|V|^{-1})}
+C\|V\|_{\mathfrak{L}^{\gamma,p}_{\gamma}}^{1/2} \|Vu\|_{L^2(|V|^{-1})}.$$
Since $\|Vu\|_{L^2(|V|^{-1})}=\|u\|_{L^2(|V|)}$
and $\|V\|_{\mathfrak{L}^{\gamma,p}_{\gamma}}$ is small enough,
this and \eqref{[Appl]sch_homo} give
$$\|u\|_{L_x^2}\leq C\|f\|_{L^2}+C\|V\|_{\mathfrak{L}^{\gamma,p}_{\gamma}}^{1/2} \|F\|_{L^2(|V|^{-1})}$$
as desired.
\end{proof}

\subsection{The wave equations}
Next we present the well-posedness results for the following wave equations of order $\gamma\geq2$
as those in the Schr\"odinger setting:
\begin{equation}\label{[Welpo]wav}
\begin{cases}
\partial_t^2u+(-\Delta)^{\gamma/2}u +V(x,t)u=F(x,t),\\
u(x,0)=f(x),\\
\partial_tu(x,0)=g(x).
\end{cases}
\end{equation}

The well-posedness for \eqref{[Welpo]wav} in the space $L_{x,t}^2(|V|)$ was studied only for the wave equation ($\gamma=2$) so far.
In \cite{RV2} it was studied for $V=V_1+V_2$ where $V_1\in L_t^\infty\mathfrak{L}_x^{2,p}$, $(n-1)/2<p\leq n/2$, $V_2\in L_t^rL_x^\infty$, $r>1$,
and $\|V_1\|_{L_t^\infty\mathfrak{L}_x^{2,p}}$ is small enough.
Our results for general orders $\gamma\geq2$ are stated as follows.

\begin{thm}
Let $n\geq2$ and $2\leq\gamma<2n$.
Assume $V\in\mathfrak{L}^{\gamma,p}_{\gamma/2}$ for $\max\{1,\frac{n+\gamma-2}{2(\gamma-1)}\}<p\leq\frac{2n+\gamma}{2\gamma}$
if $\gamma>2$ and for $p=\frac{n+1}{2}$ if $\gamma=2$, with $\|V\|_{\mathfrak{L}^{\gamma,p}_{\gamma/2}}$ small enough.
Then, if $f\in\dot{H}^{\gamma/4}$, $g \in\dot{H}^{-\gamma/4}$ and $F\in L^2(|V|^{-1})$,
	then there exists a unique solution of the problem \eqref{[Welpo]wav} in the space $L^2(|V|)$.
 Furthermore, $u$ and $\partial_tu$ belong to $C_t\dot{H}_x^{\gamma/4}$ and  $C_t\dot{H}^{-\gamma/4}$, respectively,
and satisfies the following inequalities:
	\begin{equation*}
	\|u\|_{L^2(|V|)}\leq C \|V\|_{\mathfrak{L}^{\gamma,p}_{\gamma/2}}^{1/2} \big( \|f\|_{\dot{H}^{\gamma/4}} + \|g\|_{\dot{H}^{-\gamma/4}} \big)
+C\|V\|_{\mathfrak{L}^{\gamma,p}_{\gamma/2}}\|F\|_{L^2(|V|^{-1})}
	\end{equation*}
and
\begin{equation*}
	\sup_{t\in\mathbb{R}}\|u\|_{\dot{H}_x^{\gamma/4}}+\sup_{t\in\mathbb{R}}\|\partial_tu\|_{\dot{H}_x^{-\gamma/4}}
\leq C\big(\|f\|_{\dot{H}^{\gamma/4}} + \|g\|_{\dot{H}^{-\gamma/4}}\big)+C\|V\|_{\mathfrak{L}^{\gamma,p}_{\gamma/2}}^{1/2}\|F\|_{L^2(|V|^{-1})}.
	\end{equation*}
\end{thm}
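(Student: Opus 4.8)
The plan is to run the contraction-mapping scheme used in the proof of Theorem~\ref{[Appl]Sch_thm}, now applied to the wave Duhamel formula \eqref{solwave}. First I would rewrite the solution of \eqref{[Welpo]wav} as $u=u_0+\Phi(u)$, where $u_0$ collects the two data terms and the forcing term appearing in \eqref{solwave} and
\begin{equation*}
\Phi(u)(x,t)=i\int_0^t\Im\bigg[\frac{e^{-i(t-s)\sqrt{(-\Delta)^{\gamma/2}}}}{\sqrt{(-\Delta)^{\gamma/2}}}\bigg](Vu)(\cdot,s)\,ds,
\end{equation*}
so that $(I-\Phi)(u)=u_0$. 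The key observation is that $\sqrt{(-\Delta)^{\gamma/2}}=(-\Delta)^{\gamma/4}$, so the relevant propagator is $e^{-it(-\Delta)^{\gamma/4}}$, which is exactly the operator treated in Theorem~\ref{thm2} and Proposition~\ref{prop00} with $\gamma$ replaced by $\gamma/2$; since $\Re$ and $\Im$ produce only linear combinations of $e^{\pm it(-\Delta)^{\gamma/4}}$ and complex conjugation preserves all the weighted estimates in question, the $\Re,\Im$ are harmless.

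For $\gamma>2$, I would bound $u_0$ in $L^2(|V|)$ by taking $s=\gamma/4$ in Theorem~\ref{thm2} with $\gamma\mapsto\gamma/2$: since then $2s+\gamma/2=\gamma$, since $s=\gamma/4<n/2$ is equivalent to $\gamma<2n$, and since the admissible range for $p$ becomes precisely $\max\{1,\frac{n+\gamma-2}{2(\gamma-1)}\}<p\le\frac{2n+\gamma}{2\gamma}$, this yields $\|e^{-it(-\Delta)^{\gamma/4}}f\|_{L^2(|V|)}\le C\|V\|_{\mathfrak{L}^{\gamma,p}_{\gamma/2}}^{1/2}\|f\|_{\dot H^{\gamma/4}}$; replacing $f$ by $(-\Delta)^{-\gamma/4}g$ and using $\|(-\Delta)^{-\gamma/4}g\|_{\dot H^{\gamma/4}}=\|g\|_{\dot H^{-\gamma/4}}$ handles the $g$-term, while Proposition~\ref{prop00} with $w=|V|$ handles the forcing term $F\in L^2(|V|^{-1})$. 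Altogether $u_0\in L^2(|V|)$ with $\|u_0\|_{L^2(|V|)}\le C\|V\|_{\mathfrak{L}^{\gamma,p}_{\gamma/2}}^{1/2}\big(\|f\|_{\dot H^{\gamma/4}}+\|g\|_{\dot H^{-\gamma/4}}\big)+C\|V\|_{\mathfrak{L}^{\gamma,p}_{\gamma/2}}\|F\|_{L^2(|V|^{-1})}$. For the endpoint $\gamma=2$, Theorem~\ref{thm2} is not available at the value $s=1/2$, so there I would instead use the classical wave Strichartz bound \eqref{fcs} with $s=1/2$, $p=\frac{n+1}{2}$ for the data terms (note $\mathfrak{L}^{2,(n+1)/2}_1=\mathfrak{L}^{\gamma,p}_{\gamma/2}$) and the $\gamma=2$ case of Proposition~\ref{prop00} for the forcing term.

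The contraction step is then immediate: by Proposition~\ref{prop00} with $w=|V|$, $\|\Phi(u)\|_{L^2(|V|)}\le C\|V\|_{\mathfrak{L}^{\gamma,p}_{\gamma/2}}\|Vu\|_{L^2(|V|^{-1})}=C\|V\|_{\mathfrak{L}^{\gamma,p}_{\gamma/2}}\|u\|_{L^2(|V|)}$, which is $<\frac12\|u\|_{L^2(|V|)}$ once $\|V\|_{\mathfrak{L}^{\gamma,p}_{\gamma/2}}$ is small enough. Hence $I-\Phi$ is invertible on $L^2(|V|)$, which gives the existence and uniqueness of $u\in L^2(|V|)$, and then $\|u\|_{L^2(|V|)}\le 2\|u_0\|_{L^2(|V|)}$ yields the first displayed inequality.

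Finally, for the $C_t\dot H^{\gamma/4}$ and $C_t\dot H^{-\gamma/4}$ statements I would apply $|\nabla|^{\gamma/4}$ (respectively $|\nabla|^{-\gamma/4}$ after one $t$-differentiation) to \eqref{solwave} with $F$ replaced by $F-Vu$. The data terms are controlled purely by the unitarity of $e^{-it(-\Delta)^{\gamma/4}}$ on $L^2$ once the powers of $|\nabla|$ are matched against the factor $(-\Delta)^{\mp\gamma/4}$ coming from the Duhamel kernel or from $\partial_t$; the Duhamel term, after factoring out the unitary $e^{\mp it(-\Delta)^{\gamma/4}}$ and inserting $\chi_{(0,t)}$, reduces to the dual of Theorem~\ref{thm2} (with $\gamma\mapsto\gamma/2$, $s=\gamma/4$), namely $\big\||\nabla|^{-\gamma/4}\int_{\mathbb R}e^{\mp is(-\Delta)^{\gamma/4}}H(\cdot,s)\,ds\big\|_{L^2_x}\le C\|V\|_{\mathfrak{L}^{\gamma,p}_{\gamma/2}}^{1/2}\|H\|_{L^2(|V|^{-1})}$ applied with $H=\chi_{(0,t)}(F-Vu)$, so that the Duhamel term is $\lesssim\|V\|_{\mathfrak{L}^{\gamma,p}_{\gamma/2}}^{1/2}\big(\|F\|_{L^2(|V|^{-1})}+\|u\|_{L^2(|V|)}\big)$; combining this with the first inequality and the smallness of $\|V\|$ gives the second displayed inequality, and the continuity of $t\mapsto u(\cdot,t)$ and $t\mapsto\partial_tu(\cdot,t)$ into $\dot H^{\gamma/4}$ and $\dot H^{-\gamma/4}$ then follows from these bounds by a routine density and dominated-convergence argument. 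The main obstacle I expect is bookkeeping rather than a new idea: tracking the parameter windows of Theorem~\ref{thm2} and Proposition~\ref{prop00} under the substitution $\gamma\mapsto\gamma/2$, carrying the negative-order factor $(-\Delta)^{-\gamma/4}=\sqrt{(-\Delta)^{\gamma/2}}^{\,-1}$ through the $\Re,\Im$ consistently, and treating the $\gamma=2$ endpoint separately via classical Strichartz.
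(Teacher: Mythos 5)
Your proposal follows essentially the same route as the paper: replace $\gamma$ by $\gamma/2$ in Theorem \ref{thm2} to get the homogeneous weighted estimate for $e^{\pm it(-\Delta)^{\gamma/4}}$ at $s=\gamma/4$ when $\gamma>2$, fall back to the classical Strichartz bound \eqref{fcs} with $s=1/2$, $p=(n+1)/2$ at the endpoint $\gamma=2$, use Proposition \ref{prop00} for the Duhamel and contraction steps, and recover the $C_t\dot H^{\pm\gamma/4}$ statements from the dual estimate as in the Schr\"odinger case. Your parameter-window translation ($2s+\gamma/2=\gamma$, $s<n/2\Leftrightarrow\gamma<2n$, and the $p$-range) is correct, and the treatment of $\Re,\Im$ and of the $(-\Delta)^{-\gamma/4}$ factor is exactly the "obvious modification" the paper invokes; the one small slip is your remark that Proposition \ref{prop00} needs $\gamma\mapsto\gamma/2$ --- it is already stated with $\sqrt{(-\Delta)^{\gamma/2}}$ --- but this does not affect the argument.
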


\begin{proof}
From the estimates \eqref{fcs} and \eqref{higher} with $\gamma$ replaced by $\gamma/2$, one can see that
\begin{equation*}
\|e^{it\sqrt{(-\Delta)^{\gamma/2}}}f\|_{L^2(w(x,t))} \leq C\|w\|_{\mathfrak{L}^{2s+\gamma/2,p}_{\gamma/2}}^{1/2} \|f\|_{\dot{H}^s}
\end{equation*}
if $-\frac{(\gamma-4)n}{4(n+2)} <s< \frac n2$ and $\max\{1,\frac{n+\gamma-2}{4s+\gamma-2}\} <p\leq \frac{2n+\gamma}{4s+\gamma}$
when $\gamma>2$, and if $1/2\leq s<n/2$ and $p=\frac{n+1}{2s+1}$ when $\gamma=2$.
Using this homogeneous estimate with $s=\gamma/4$ and the inhomogeneous estimate \eqref{121},
the proof follows from an obvious modification of those for the Schr\"odinger case.
So we omit the details.
\end{proof}

\begin{rem}
In \cite{KoS} we studied the case $\gamma=2$ in \eqref{[Welpo]wav} for $n=2,3$ when
$V\in L_t^1\mathfrak{L}_x^{1-s,r}\cap\mathfrak{L}_{x,t}^{2s+1,p}$ for $3/4\leq s<1$, $1<r\leq 2/(1-s)$, $1<p\leq(n+1)/(2s+1)$,
with corresponding small norms on $V$. (Here, when $n=3$, $\mathfrak{L}_x^{1-s,r}$ is replaced by $L_x^{\infty}$.)
But, if we substitute in the proof Proposition \ref{prop00} by Proposition \ref{prop010},
we can use our approach, as we did in \cite{KoS}, to improve significantly this previous result.
We do not want to get involved in these calculations in the present paper.
\end{rem}


\section{Appendix}\label{sec7}
\subsection{Proof of Proposition \ref{prop0}}
Let $M$ be a positive constant as large as we need, and $\varphi\in C_0^{\infty}(\mathbb{R})$
be supported in $[-1,1]$ and $0\leq\varphi\leq1$.
We consider a function $f$ such that
\begin{equation*}
\widehat{f}(\xi) = \varphi(\xi_1 -M) \prod_{k=2}^{n} \varphi(\xi_k)
\end{equation*}
which is supported in a cube centered at $(M,0,...,0)$ with side length $\sim1$.
Next, we define the following set
$$
B = \{ (x,t)\in\mathbb{R}^{n+1}: |t|\leq\frac1{4n},\,\, |x_1-2Mt|\leq\frac1{4n},\,\, |x_k|\leq\frac1{4n}\text{ for } k=2,...,n\}.
$$
(See Figure 3.)

\begin{figure}[t!]
\includegraphics[width=12.0cm]{./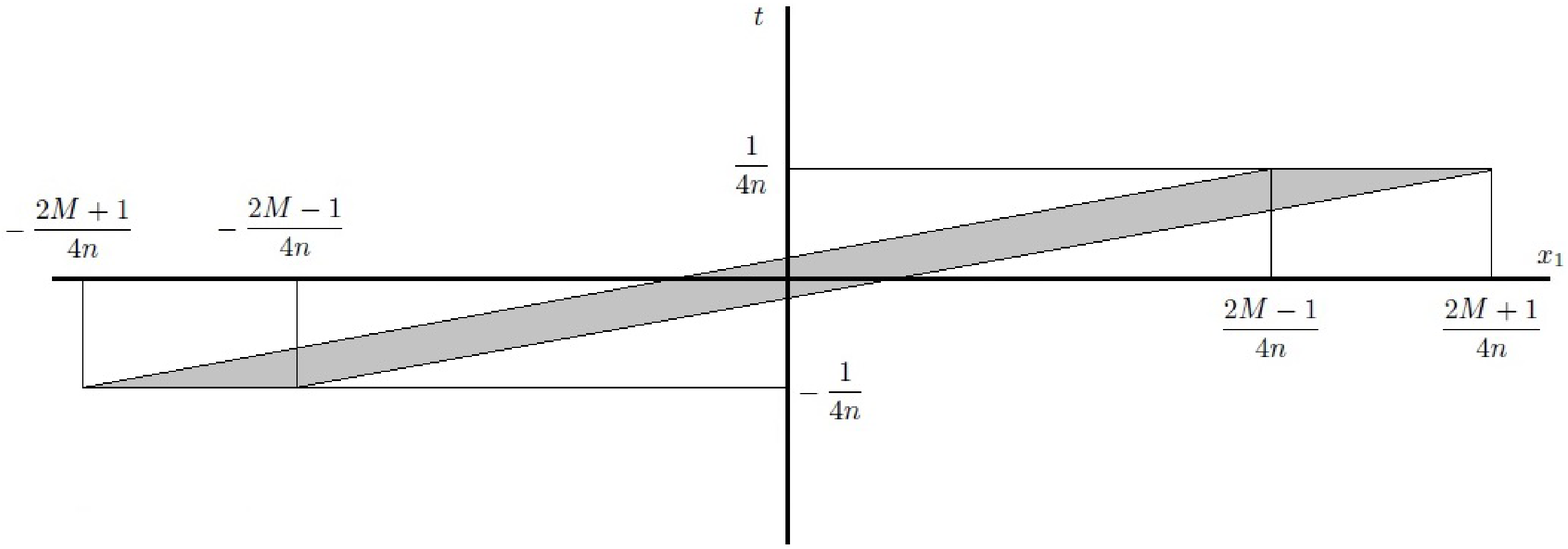}
\caption{The cross section of the set $B$ in the $x_1t$-plane}
\end{figure}

Using a change of variable, $\xi_1\rightarrow\xi_1+M$, we may now write for $(x,t) \in B$
$$
\begin{aligned}
\big||\nabla|^{-s} e^{it\Delta} f(x) \big|
&\sim \bigg|\int_{\mathbb{R}^n}|\xi|^{-s} e^{i x\cdot\xi-it|\xi|^2} \varphi(\xi_1 -M)\prod_{k=2}^{n}\varphi(\xi_k)d\xi \bigg|\\
&\sim \bigg|\int_{\mathbb{R}^n}|\xi|^{-s} e^{i(x_1-2Mt)\xi_1-it\xi_1^2}e^{\sum_{k=2}^n(ix_k\xi_k-it\xi_k^2)}\prod_{k=1}^{n}\varphi(\xi_k)d\xi\bigg|\\
&\gtrsim M^{-s}
\end{aligned}
$$
because the phase is bounded as follows:
$$\big|(x_1-2Mt)\xi_1-t\xi_1^2+\sum_{k=2}^n(x_k\xi_k-t\xi_k^2)\big|\leq1/2.$$
Setting $w(x,t)=\chi_B(x,t)$, we then get
$$
\big\||\nabla|^{-s} e^{it\Delta} f\big\|_{L^2(w(x,t))}
\gtrsim M^{-s},
$$
and
$$\|w\|_{\mathfrak{L}^{\alpha,p}_2}\sim\max\{M^{-\alpha/2},M^{-1/p},M^{\alpha -(n+2)/p}\}$$
which follows by taking $r\in\{M^{-1/2},1,M\}$ in the definition of $\mathfrak{L}^{\alpha,p}_2$.
On the other hand, it is clear that $\|f\|_2 \sim 1$.
Consequently, \eqref{depen2} would imply
$$M^{-2s}\lesssim \max\{M^{-s+1},M^{-1/p},M^{2s+2-(n+2)/p}\}$$
which is false if
$-2s>-s+1$, $-2s>-1/p$ and $-2s>2s+2-(n+2)/p$.
This completes the proof.

\subsection{Further remarks}\label{subsec7.2}

Let us finally mention further estimates for the linearized KdV type equations:
\begin{equation*}
\begin{cases}
\partial_tu+\partial_x^{2k+1}u=F(x,t),\\
u(x,0)=f(x),
\end{cases}
\end{equation*}
whose the solution is given by
\begin{equation*}
u(x,t)=e^{-t\partial_x^{2k+1}}f(x)+\int_0^te^{-(t-s)\partial_x^{2k+1}}F(\cdot,s)ds
\end{equation*}
where $(x,t)\in\mathbb{R}^{1+1}$ and $k\geq1$ is an integer.
The proofs for Theorem \ref{thm2} and Proposition \ref{prop} when $n=1$ and $\gamma=2k+1$ would be clearly worked
for the following proposition as well:

\begin{thm}\label{thm33}
Let $k\geq1$ be an integer. Then we have
\begin{equation}\label{kdv0}
\big\|e^{-t\partial_x^{2k+1}}f \big\|_{L^2(w(x,t))}\leq  C\|w\|_{\mathfrak{L}_{2k+1}^{2(s+k)+1,p}}^{1/2}\|f\|_{\dot{H}^s}
\end{equation}
if $-\frac{2k-1}{6}<s<\frac12$ and $\max\{1,\frac{4k+1}{4(s+k)}\} <p\leq \frac{2k+2}{2(s+k)+1}$,
and
\begin{equation*}
\bigg\|\int_{0}^{t}e^{-(t-s)\partial_x^{2k+1}}F(\cdot,s)ds\bigg\|_{L^2(w(x,t))}
\leq C\|w\|_{\mathfrak{L}_{2k+1}^{2k+1,p}} \|F\|_{L^2(w(x,t)^{-1})}
\end{equation*}
if $\frac{1+4k}{4k}<p\leq\frac{2k+2}{2k+1}$.
\end{thm}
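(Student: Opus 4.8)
The plan is to run, essentially verbatim, the proofs already given for Theorem \ref{thm2} and Proposition \ref{prop}, specialized to $n=1$ and $\gamma=2k+1$. The point is that $e^{-t\partial_x^{2k+1}}$ is a dispersive flow of exactly the same type as $e^{it(-\Delta)^{\gamma/2}}$: since $\partial_x$ has Fourier symbol $i\xi$, the multiplier of $e^{-t\partial_x^{2k+1}}$ is $e^{-i(-1)^k t\,\xi^{2k+1}}$, a unitary group on $L^2(\mathbb{R})$ whose phase $\xi\mapsto\xi^{2k+1}$ takes over the role played by $|\xi|^\gamma$. First I would reduce, by Lemma \ref{lem2'} together with the Littlewood-Paley theorem on weighted $L^2$ spaces (as in Section \ref{sec3}), to the frequency-localized estimate
\[
\big\|e^{-t\partial_x^{2k+1}}P_m f\big\|_{L^2(w)}\le C\,2^{m(\alpha-(2k+1))/2}\|w\|_{\mathfrak{L}^{\alpha,p}_{2k+1}}^{1/2}\|f\|_{L^2},\qquad \alpha=2(s+k)+1,
\]
where $P_m$ is the Littlewood-Paley projection. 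Using the scaling $(x,t)\mapsto(\lambda x,\lambda^{2k+1}t)$ attached to the equation, matters reduce to $m=0$; then the $TT^*$ argument and duality of Section \ref{sec4} reduce everything to the time-localized bilinear form estimates of Proposition \ref{tlocal} with $n=1$, $\gamma=2k+1$, and \eqref{kdv0} follows by the bilinear interpolation (Lemma \ref{inter}) and the real-interpolation identities (Lemma \ref{id}) under $p>1$ and $\alpha>1+\frac{4k+1}{2p}$; together with $1\le p\le\frac{n+\gamma}{\alpha}$ this is the stated range, and the requirement that the resulting $p$-interval be nonempty forces $-\frac{2k-1}{6}<s<\frac12$.

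The only ingredient that is not a literal transcription is the pointwise bound for the kernel appearing in the proof of Proposition \ref{tlocal}, namely
\[
K_{t-s}(x-y)=\int_{\mathbb{R}}e^{i(x-y)\xi-i(-1)^k(t-s)\xi^{2k+1}}\phi(|\xi|)^2\,d\xi .
\]
When $|x-y|$ is large compared with $|t-s|$ — recall $|t-s|\lesssim 2^j$ on the $j$-th time-localized piece and $|\xi|\sim1$ on the support of $\phi$ — repeated integration by parts in $\xi$ gives the rapid decay \eqref{416}. When $|x-y|$ and $|t-s|$ are comparable I would invoke the stationary phase lemma, Lemma \ref{25}, with $\varphi(\xi)=\xi^{2k+1}$: on the annulus $|\xi|\sim1$ one has $\varphi''(\xi)=(2k+1)(2k)\xi^{2k-1}\neq0$, so the Hessian (a $1\times1$ matrix here) has rank $1$, and Lemma \ref{25} yields the bound $(1+|(x,t)|)^{-1/2}$. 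Hence the factor $C_\gamma(j)$ in Proposition \ref{tlocal} is $2^{-j/2}$, exactly as in the case $\gamma>1$, $n=1$, and every subsequent step of Section \ref{sec4} carries over unchanged.

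For the inhomogeneous estimate I would follow Subsection \ref{subsec5.1}. The $TT^*$ step above in fact produces the stronger estimate with $\int_{-\infty}^t$ in place of $\int_{\mathbb{R}}$, hence at the frequency-localized level
\[
\Big\|\int_0^t e^{-(t-s)\partial_x^{2k+1}}P_mF(\cdot,s)\,ds\Big\|_{L^2(w)}\le C\,2^{m(\alpha-(2k+1))}\|w\|_{\mathfrak{L}^{\alpha,p}_{2k+1}}\|F\|_{L^2(w^{-1})},\qquad \alpha>1+\frac{4k+1}{2p};
\]
summing the Littlewood-Paley pieces with $\alpha=2k+1$ (using that $w^{-1}\in A_2$ iff $w\in A_2$) gives the claimed bound for $\frac{1+4k}{4k}<p\le\frac{2k+2}{2k+1}$, since $\frac{1+4k}{4k}>1$. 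The main obstacle — really the only one — is to check that the polynomial phase $\xi^{2k+1}$ satisfies the nondegeneracy hypothesis of Lemma \ref{25} on $|\xi|\sim1$ and that neither the sign $(-1)^k$ nor the behaviour of the phase near $\xi=0$ (irrelevant after the frequency localization) affects the integration-by-parts and stationary-phase estimates; once this is verified, the proof is identical to the $n=1$, $\gamma=2k+1$ instance of the arguments already presented.
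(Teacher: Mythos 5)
Your proposal is correct and matches the paper's own (extremely terse) proof, which simply invokes the arguments of Theorem~\ref{thm2} and Proposition~\ref{prop} with $n=1$ and $\gamma=2k+1$. You have rightly identified the only point requiring verification --- that the polynomial phase $\xi^{2k+1}$ (replacing $|\xi|^\gamma$) has non-vanishing second derivative $(2k+1)(2k)\xi^{2k-1}$ on $|\xi|\sim 1$, so Lemma~\ref{25} with rank-$1$ Hessian and the integration-by-parts bound go through unchanged --- and your bookkeeping of the resulting range of $(s,p)$ reproduces the stated conditions.
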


\begin{rem}
The condition on $s$ and $p$ in this theorem is, of course,
the same as in Theorem \ref{thm2} and Proposition \ref{prop}, with $\gamma=2k+1$ and $n=1$,
and we have a smoothing effect in \eqref{kdv0} with a gain of regularity of order $s<\frac{2k-1}{6}$.
\end{rem}

As an application, Theorem \ref{thm33} will then give a similar result on the well-posedness for the following Cauchy problem
\begin{equation*}
\begin{cases}
\partial_tu+\partial_x^{2k+1}u+V(x,t)u=F(x,t),\\
u(x,0)=f(x).
\end{cases}
\end{equation*}



\begin{thebibliography}{9}


\bibitem{BBCRV} J. A. Barcel\'o, J. M. Bennett, A. Carbery, A. Ruiz and M. C. Vilela,
\textit{A note on weighted estimates for the Schr\"odinger operator},
Rev. Mat. Complut. 21 (2008), 481-488.

\bibitem{BBCRV2} J. A. Barcel\'o, J. M. Bennett, A. Carbery, A. Ruiz and M. C. Vilela,
\textit{Strichartz inequalities with weights in Morrey-Campanato classes},
Collect. Math. 61 (2010), 49-56.

\bibitem{BBRV} J. A. Barcelo, J. M. Bennett, A. Ruiz and M. C. Vilela, \textit{Local smoothing for Kato potentials
in three dimensions}, Math. Nachr., 282 (2009), 1391-1405.

\bibitem{BK} M. Ben-Artzi and S. Klainerman, \textit{Decay and regularity for the Schr\"odinger equation},
J. Anal. Math. 58 (1992), 25-37.

\bibitem{BL} J. Bergh and J. L\"ofstr\"om, \textit{Interpolation Spaces, An Introduction},
Springer-Verlag, Berlin-New York, 1976.

\bibitem{CS} S. Chanillo and E. Sawyer, \textit{Unique continuation for $\Delta+v$ and the C. Fefferman-Phong class},
Trans. Amer. Math. Soc. 318 (1990), 275-300.

\bibitem{CKS} C.-H. Cho, Y. Koh and I. Seo, \textit{On inhomogeneous Strichartz estimates for fractional Schr\"odinger equations and their applications}, Discrete Contin. Dyn. Syst., 36 (2016), 1905-1926.

\bibitem{D'APV} P. D'Ancona, V. Pierfelice and N. Visciglia, \textit{Some remarks on the Schr\"odinger equation with a potential in $L_t^rL_x^s$}, Math. Ann. 333 (2005), 271-290.

\bibitem{FW} D. Fang and C. Wang, \textit{Weighted Strichartz estimates with angular regularity and their applications},
Forum Math. 23 (2011), 181-205.

\bibitem{FP} C. Fefferman and D. H. Phong, \textit{Lower bounds for Schr\"odinger equations},
Conference on Partial Differential Equations (Saint Jean de Monts), Conf. No. 7, Soc. Math. France, Paris, 1982.

\bibitem{GV} J. Ginibre and G. Velo, \textit{The global Cauchy problem for the nonlinear Schr\"odinger equation
revisited}, Ann. Inst. H. Poincar\'{a} Anal. Non Lin\'{a}are 2 (1985), 309-327.

\bibitem{G} L. Grafakos, \emph{Modern Fourier Analysis}, Springer, New York, 2008.

\bibitem{HMSSZ} K. Hidano, J. Metcalfe, H. F. Smith, C. D. Sogge and Y. Zhou,
\textit{On abstract Strichartz estimates and the Strauss conjecture for nontrapping obstacles},
Trans. Amer. Math. Soc. 362 (2010), 2789-2809.

\bibitem{K} V. I. Karpman, \textit{Solitons of the fourth order nonlinear Schr\"odinger equation}, Phys. Lett. A 193 (1994), 355-358.

\bibitem{K2} V. I. Karpman, \textit{Stabilization of soliton instabilities by higher-order
dispersion: Fourth order nonlinear Schr\"odinger-type equations}, Phys. Rev. E 53 (1996), R1336-R1339.

\bibitem{KS} V. I. Karpman, A. G. Shagalov, \textit{Stability of soliton described by nonlinear
Schr\"odinger type equations with higher-order dispersion}, Phys. D 144 (2000), 194-210.

\bibitem{Ka} T. Kato, \textit{On the Cauchy problem for the (generalized) Korteweg-de Vries equation},
Studies in applied mathematics, 93-128,
Adv. Math. Suppl. Stud., 8, Academic Press, New York, 1983.

\bibitem{KY} T. Kato and K. Yajima, \textit{Some examples of smooth operators and the associated smoothing effect},
Rev. Math. Phys. 1 (1989), 481-496.

\bibitem{KT} M. Keel and T. Tao, \textit{Endpoint Strichartz estimates},
Amer. J. Math. 120 (1998), 955-980.

\bibitem{KoS} Y. Koh and I. Seo, \textit{On weighted $L^2$ estimates for solutions of the wave equation}, Proc. Amer. Math. Soc., 144 (2016), 3047-3061.

\bibitem{KoS2} Y. Koh and I. Seo, \textit{Global well-posedness for higher-order Schr\"odinger equations in weighted $L^2$ spaces}, Comm. Partial Differential Equations, 40 (2015), 1815-1830.

\bibitem{KoS3} Y. Koh and I. Seo, \textit{Strichartz estimates for Schr\"odinger equations in weighted $L^2$ spaces and their applications},
Discrete Contin. Dyn. Syst., 37 (2017), 4877-4906.

\bibitem{Ko} T. S. Kopaliani, \textit{Littlewood-Paley theorem on $L^{p(t)}(\mathbb{R}^n)$ spaces}, (Ukrainian) Ukrain. Mat.
Zh., 60 (2008), 1709-1715; translation in Ukrainian Math. J., 60 (2008), 2006-2014.

\bibitem{Kr} T. Krauthammer and E. Ventsel, \textit{Thin Plates and Shells Theory: Analysis, and Applications},
Marcel Dekker, Inc., New York, 2001.

\bibitem{Ku} D. Kurtz, \textit{Littlewood-Peley and multiplier theorems on weighted $L^p$ spaces},
Trans. Amer. Math. Soc. 259 (1980), 235-254.

\bibitem{L} W. Littman, \textit{Fourier transforms of surface-carried measures and differentiability of
surface averages}, Bull. Amer. Math. Soc. 69 (1963), 766-770.

\bibitem{Lo} A. E. H. Love, \textit{A treatise on the mathematical theory of elasticity}, Dover, New York (1994).

\bibitem{M} S. J. Montgomery-Smith, \textit{Time decay for the bounded mean oscillation of solutions of the Schr\"odinger and wave equation}, Duke Math. J. 91 (1998), 393-408.

\bibitem{NS} V. Naibo and A. Stefanov \textit{On some Schr\"odinger and wave equations with time dependent potentials}, Math. Ann. 334 (2006), 325-338.

\bibitem{RV} A. Ruiz and L. Vega, \textit{On local regularity of Schr\"odinger equations}, Internat. Math. Res. Notices 1993, 13-27.

\bibitem{RV2} A. Ruiz and L. Vega, \emph{Local regularity of solutions to wave equations with time-dependent
potentials}, Duke Math. J. 76 (1994), 913-940.

\bibitem{S} I. Seo, \textit{Global unique continuation from a half space for the Schr\"odinger equation}, J. Funct. Anal. 266 (2014), 85-98.

\bibitem{S2} I. Seo, \textit{From resolvent estimates to unique continuation for the Schr\"odinger equation},
Trans. Amer. Math. Soc., 368 (2016), 8755-8784.

\bibitem{St} E. M. Stein, \emph{Harmonic Analysis. Real-variable Methods, Orthogonality, and Oscillatory Integrals}, Princeton University Press, Princeton, New Jersey, 1993.

\bibitem{Str} R. S. Strichartz, \textit{A priori estimates for the wave equation and some applications},
J. Funct. Anal. 5 (1970), 218-235.

\bibitem{Str2} R. S. Strichartz, \textit{Restrictions of Fourier transforms to quadratic surfaces and decay of
solutions of wave equations}, Duke Math. J. 44 (1977), 705-714.

\bibitem{Su} M. Sugimoto, \textit{Global smoothing properties of generalized Schr\"odinger equations},
J. Anal. Math. 76 (1998), 191-204.

\bibitem{SW} E. Sawyer and R. L. Wheeden, \textit{Weighted inequalities for fractional integrals on Euclidean and homogeneous spaces},
Amer. J. Math. 114 (1992), no. 4, 813-874.

\bibitem{T} H. Triebel, \emph{Interpolation Theory, Function Spaces, Differential operator}, North-Holland,
New York, 1978.

\bibitem{V} M. C. Vilela, \textit{Regularity of solutions to the free Schr\"odinger equation with radial initial data},
Illinois J. Math. 45 (2001), 361-370.

\bibitem{Wa} K. Watanabe, \textit{Smooth perturbations of the selfadjoint operator $|\Delta|^{\alpha/2}$}, Tokyo J. Math. 14 (1991), 239-250.


\end{thebibliography}
\end{document}